\theoremstyle{definition}
\newtheorem{defn}{Definition}[section]
\theoremstyle{plain}
\newtheorem{thm}[defn]{Theorem}
\newtheorem{lem}[defn]{Lemma}
\newtheorem{prop}[defn]{Proposition}
\newtheorem{cor}[defn]{Corollary}
\def\C{\ensuremath{\mathbb{C}}}
\def\G{\ensuremath{\mathbb{G}}}
\def\P{\ensuremath{\mathbb{P}}}
\def\Q{\ensuremath{\mathbb{Q}}}
\def\R{\ensuremath{\mathbb{R}}}
\def\Z{\ensuremath{\mathbb{Z}}}
\def\AA{\ensuremath{\mathcal A}}
\def\FF{\ensuremath{\mathcal F}}
\def\II{\ensuremath{\mathcal I}}
\def\OO{\ensuremath{\mathcal O}}
\def\TT{\ensuremath{\mathcal T}}
\def\ch{\mathop{\mathrm{ch}}\nolimits}
\def\Coh{\mathop{\mathrm{Coh}}\nolimits}
\def\deg{\mathop{\mathrm{deg}}\nolimits}
\def\dim{\mathop{\mathrm{dim}}\nolimits}
\def\Eff{\mathop{\mathrm{Eff}}\nolimits}
\def\ext{\mathop{\mathrm{ext}}\nolimits}
\def\Ext{\mathop{\mathrm{Ext}}\nolimits}
\def\Hilb{\mathop{\mathrm{Hilb}}\nolimits}
\def\Hom{\mathop{\mathrm{Hom}}\nolimits}
\def\id{\mathop{\mathrm{id}}\nolimits}
\def\PGL{\mathop{\mathrm{PGL}}}
\def\Pic{\mathop{\mathrm{Pic}}\nolimits}
\def\tilt{\mathop{\mathrm{tilt}}}
\def\Stab{\mathop{\mathrm{Stab}}\nolimits}
\def\into{\ensuremath{\hookrightarrow}}
\def\onto{\ensuremath{\twoheadrightarrow}}
\begin{document}

\title{Families of elliptic curves in $\P^3$ and Bridgeland Stability}

\author{Patricio Gallardo}
\address{Department of Mathematics, University of Georgia, Athens, GA 30602, USA}
\email{gallardo@math.uga.edu}
\urladdr{https://sites.google.com/site/patriciogallardomath/}

\author{C\'esar Lozano Huerta}
\address{Instituto de Matem\'aticas, CONACYT-UNAM, Oaxaca de Ju\'arez, Oax. 6800, M\'exico }
\email{lozano@im.unam.mx}
\urladdr{http://www.matem.unam.mx/lozano}

\author{Benjamin Schmidt}
\address{Department of Mathematics, The University of Texas at Austin, 2515 Speedway, RLM 8.100, Austin, TX 78712, USA}
\email{schmidt@math.utexas.edu}
\urladdr{https://sites.google.com/site/benjaminschmidtmath/}

\keywords{Hilbert Schemes of Curves, Elliptic Curves, Bridgeland stability conditions, Moduli spaces}

\subjclass[2010]{14H10 (Primary); 14F05, 18E30 (Secondary)}

\begin{abstract}
We study wall crossings in Bridgeland stability for the Hilbert scheme of elliptic quartic curves in three dimensional projective space. We provide a geometric description of each of the moduli spaces we encounter, including when the second component of this Hilbert scheme appears. Along the way, we prove that the principal component of this Hilbert scheme is a double blow up with smooth centers of a Grassmannian, exhibiting a completely different proof of this known result by Avritzer and Vainsencher. This description allows us to compute the cone of effective divisors of this component.
\end{abstract}

\maketitle
\section*{introduction}
\label{sec:introduction}

The global geometry of a given Hilbert scheme is generally very difficult to study. Recently, the theory of Bridgeland stability has provided a new set of tools to study the geometry of these Hilbert schemes. For instance, the study of the Hilbert scheme of points on surfaces has benefited from these new tools (see \cite{ABCH13, BM14, CHW17, LZ16, MM13, Nue16, YY14}). A sensible step forward is now to apply these tools to examine families of curves contained in threefolds. The first instance of this was carried out by the last author in \cite{Sch15}, where he studies the Hilbert scheme of twisted cubics. This paper continues this investigation about curves in $\P^3$ and analyzes the global geometry, as well as wall-crossing phenomena, of the Hilbert scheme $\Hilb^{4t}(\P^3)$, which parametrizes subschemes of $\P^3$ of genus $1$ and degree $4$.

A smooth curve of genus $1$ and degree $4$ in $\P^3$, which we refer to as an elliptic quartic, is the transversal intersection of two quadric surfaces. By considering the pencil that these quadrics generate, we realize the family of smooth elliptic quartics as an open subset of $\G(1,9)$, the Grassmannian  of lines in the space $ |\mathcal{O}_{\P^3}(2)|$ of quadric surfaces in $\P^3$. We show that the Hilbert scheme $\Hilb^{4t}(\P^3)$ is a moduli space of Bridgeland stable objects, and moreover, one of its components is related through birational transformations to the Grassmannian $\G(1,9)$ via wall-crossing.

Let us recall the notion of Bridgeland stability in order to state this result precisely. For classical slope stability with respect to a given polarization $H$ on a smooth projective complex variety $X$, one defines a number
$\mu_H(E) = \tfrac{H^{n-1} \cdot \ch_1(E)}{H^n \cdot \ch_0(E)}$
called the slope for any coherent sheaf $E \in \Coh(X)$. A coherent sheaf is then called slope semistable if all proper non trivial subsheaves have smaller slope. For Bridgeland stability, one replaces the category of coherent sheaves with a different abelian subcategory $\AA \subset D^b(X)$ and replaces the slope with a homomorphism $Z: K_0(X) \to \C$, mapping $\AA$ to the upper half plane or the negative real line, where $K_0(X)$ is the Grothendieck group. The slope is then given by
\[
\mu(E) = -\frac{\Re{Z(E)}}{\Im{Z(E)}}
\]
for any $E \in \AA$. In addition, one demands that every object in $D^b(X)$ has a canonical filtration into semistable factors called the Harder-Narasimhan filtration and the so called support property, which ensures that the set of stability conditions $\Stab(X)$ can be naturally given the structure of a complex manifold.

We can now state our main result. Let us fix a class $v \in K_0(X)$, then there is a locally finite wall and chamber structure in $\Stab(X)$, such that the set of semistable objects of class $v$ is constant within each chamber. Our main result describes the wall and chamber structure of a subspace of $\Stab(\P^3)$ as well as the corresponding moduli spaces of semistable objects in the case of elliptic quartics in $\P^3$.

\newtheorem*{thm:wall_crossings}{Theorem A}
\begin{thm:wall_crossings}
Let $v = (1, 0, -4, 8) = \ch(\II_C)$, where $C \subset \P^3$ is an elliptic quartic curve. There is a path $\gamma: [0,1] \to \R_{>0} \times \R \subset \Stab(\P^3)$ such that the moduli spaces of semistable objects with Chern character $v$ in its image outside of walls are given in the following order.
\begin{enumerate}
\setcounter{enumi}{-1}
  \item The empty space $M_0 = \emptyset$.
  \item The Grassmannian $M_1 = \G(1,9)$ parametrizing pencils of quadrics. The only non-ideal sheaves in the moduli space come from the case, where a $2$-plane is contained in the base locus of the pencil.
  \item The second moduli space $M_2$ is the blow up of $\G(1,9)$ along a smooth locus isomorphic to $\G(1,3) \times (\P^3)^{\vee}$ parametrizing the non-ideal sheaves in $M_1$. The exceptional divisor generically parametrizes unions of a line and a plane cubic intersecting themselves in a single point. The only non-ideal sheaves in this moduli space come from the case when the line is contained in the plane.
  \item The third moduli space $M_3$ has two irreducible components $M_3^1$ and $M_3^2$. The first component $M_3^1$ is the blow up of $M_2$ along the smooth incidence variety parametrizing length two subschemes in a plane in $\P^3$. The second component $M_3^2$ is a $\P^{14}$-bundle over $\Hilb^2(\P^3) \times (\P^3)^{\vee}$. It generically parametrizes unions of plane quartics with two points, either outside the curve or embedded. The two components intersect transversally along the exceptional locus of the blow up. The only non-ideal sheaves occur in the case where at least one of the two points is not scheme-theoretically contained in the plane.
  \item The fourth moduli space $M_4$ has two irreducible components $M_4^1$ and $M_4^2$. The first component is equal to $M_3^1$. The second component is birational to $M_3^2$. The new locus parametrizes plane quartics with two points, such that exactly one point is scheme-theoretically contained in the plane.
  \item The fifth moduli space is the Hilbert scheme $\Hilb^{4t}(\P^3)$, which has two components: $\Hilb_1^{4t}$ and $\Hilb_2^{4t}$. The principal component $\Hilb_1^{4t}$ contains an open subset of elliptic quartic curves and is equal to $M_3^1$. The second component is of dimension $23$ and is birational to $M_3^2$. Moreover, the two components intersect transversally along a locus of dimension $15$. The component $\Hilb_2^{4t}$ differs from $M_4^2$ in the locus of plane cubics together with two points scheme-theoretically contained in the plane. 
\end{enumerate}
\end{thm:wall_crossings}

\begin{figure}
\centering
\begin{overpic}[scale=0.75,unit=1 mm]{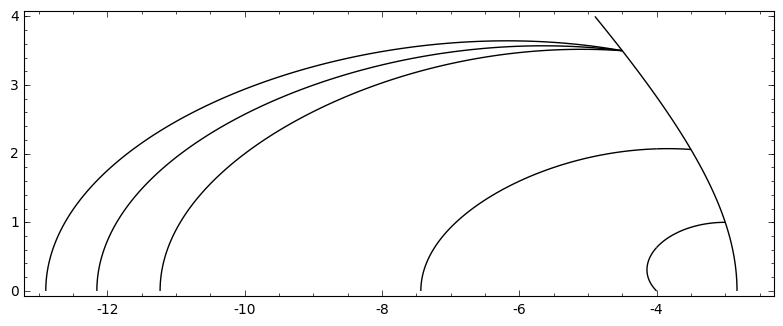} 
\put(12, 50) {$\Hilb^{4t}(\P^3)$}
\put(12, 12) {$M_4$}
\put(23, 12) {$M_3$}
\put(60, 12) {$M_2$}
\put(95, 12) {$\G(1,9)$}
\put(130, 12) {$\emptyset$}
\end{overpic}
\caption{Wall and chamber structure in a subspace of $\Stab(\P^3)$ for $\Hilb^{4t}(\P^3)$ and their associated models according to Theorem A}
\label{fig:bridgeland_walls_elliptic}
\end{figure}

As a consequence of Theorem A, we obtain that the Hilbert scheme $\Hilb^{4t}(\P^3)$ has two components. This is a well known fact (see \cite{CN12, Got08}). More interestingly, the previous result describes what is called the principal component, which parametrizes smooth elliptic curves along with their flat limits. We will denote this component by $\Hilb_1^{4t}$, and our next result describes its global geometry. 

\newtheorem*{thm:va92}{Theorem B}
\begin{thm:va92}[\cite{VA92}]
The closure of the family of smooth elliptic quartics in the Hilbert scheme $\Hilb^{4t}(\P^3)$, is a double blowup of the Grassmannian $\G(1,9)$ along smooth centers.
\end{thm:va92}

A comment is in order about the previous theorem. The description of $\Hilb_1^{4t}(\P^3)$ above was proved in \cite{VA92} by Vainsencher and Avritzer using classical methods. Our techniques to reprove their result are distinct, as we make use of the bounded derived category of coherent sheaves on $\P^3$ and Bridgeland stability.

Since the principal component $\Hilb_1^{4t}(\P^3)$ is a double blowup, it is natural to ask what are the subschemes of $\P^3$ that the exceptional divisors parametrize and whether they span extremal rays in the cone of effective divisors $\Eff(\Hilb_1^{4t})$. Proposition \ref{prop:E1_E2_description}, and the following result answer these two questions. Consequently, we have a moduli interpretation for the generators of $\Eff(\Hilb_1^{4t})$, which is the following.

Let $E_1$ be the closure of the locus parametrizing subschemes of $\P^3$ that are the union of a plane cubic and an incident line. By $E_2$ we denote the closure of the locus parametrizing plane quartics with two nodes and two embedded points at such nodes. Let $\Delta$ denote the closure of the locus of nodal elliptic curves. 

\newtheorem*{thm:cones}{Theorem C}
\begin{thm:cones}
The cone of effective divisors of $\Hilb_1^{4t}$ is generated by $\Eff(\Hilb_1^{4t})=\langle E_1, E_2, \Delta\rangle$. 
\end{thm:cones}

\subsection*{Ingredients}

The notion of tilt stability on a smooth projective threefold was introduced in \cite{BMT14}. It is defined in a similar way one defines Bridgeland stability on a surface. Thus, these two notions of stability share computational properties. Tilt stability is intended as a stepping stone to Bridgeland stability. The proof of Theorem A is mostly based on this theory.

In contrast to the surface case, computing which objects destabilize at a given wall is difficult due to the lack of unique stable factors in the Jordan-H\"older filtration of a strictly semistable object. Computing the walls numerically in tilt stability is of similar difficulty as in the surface case and often times possible. On the other hand, while it is generally difficult to determine all walls in Bridgeland stability on a given path, it is not so difficult to determine which objects destabilize at a given wall. In order to resolve this issue we apply a technique from \cite{Sch15} that allows to translate walls from tilt stability into Bridgeland stability.

In order to identify the global structure of the Bridgeland moduli spaces, a careful analysis of its singularities is necessary. We apply deformation theory to these problems, and large parts of it reduce to heavy $\Ext$-computation. Even though this can be done by hand, computer calculations with \cite{M2} turn out to be tremendously helpful. The situation is more involved when it comes to the intersection of the two components. We reduce the question to a single ideal in that case and apply the technique of \cite{PS85}. We make use of the Macaulay2 implementation \cite{Ilt12} of this technique.

The proof of Theorem C uses the description of the exceptional divisors provided in Proposition \ref{prop:E1_E2_description}, and exhibits the dual curves to them in order to conclude.

\subsection*{Organization}
In Section 1, we recall basic definitions about stability conditions. In Section 2, we carry out numerical computations in tilt stability needed to understand walls in Bridgeland stability. In Section 3, we describe the equations of some ideals depending on the exact sequences they fit in. We use this description to understand the local geometry of the intersection of the two components of our Hilbert scheme. In Section 4, we translate the computations in tilt stability to Bridgeland stability. Furthermore, we analyze singularities to provide a proof of Theorem A and Theorem B. In Section 5, we prove Theorem C. Appendix A contains our Macaulay2 code.

\subsection*{Acknowledgements}
We would like to thank Francesco Cavazzani, Dawei Chen, Izzet Coskun, Joe Harris, Sean Keel, Emanuele Macr\`i, and Edoardo Sernesi for insightful discussions about this work. We are also grateful to the referee for carefully going through the article. C. Lozano Huerta and B. Schmidt would also like to thank the organizers of the II ELGA school in Cabo Frio, Brazil for organizing a wonderful conference supported by NSF grant DMS-1502154 at which parts of this work was done. P. Gallardo is supported by NSF grant DMS-1344994 of the RTG in Algebra, Algebraic Geometry, and Number Theory, at the University of Georgia. C. Lozano Huerta thanks the Department of Mathematics at Harvard University 	for providing ideal working conditions. He is funded by CONACYT Grant CB-2015-01 No. 253061. B. Schmidt has been partially supported by NSF grant DMS-1523496 (PI Emanuele Macr\`i) and a Presidential Fellowship of the Ohio State University. He also wants to thank Northeastern University, where part of this work was done, for their hospitality

\subsection*{Notation}
We work over the field of the complex numbers throughout. We also use the following notation.
\begin{center}
  \begin{tabular}{ r l }
    $\II_{Z/X}$, $\II_Z$ & ideal sheaf of a closed subscheme $Z \subset X$ \\
    $D^b(X)$ & bounded derived category of coherent sheaves on $X$ \\
    $\ch_X(E)$, $\ch(E)$ & Chern character of an object $E \in D^b(X)$  \\
    $\ch_{\leq l,X}(E)$, $\ch_{\leq l}(E)$ & $(\ch_{0,X}(E), \ldots,
    \ch_{l,X}(E))$ \\
    $\G(r,k)$ & the Grassmannian parametrizing subspaces $\P^r \subset \P^k$ \\
    $\Hilb_1^{4t}$ & closure of the locus of elliptic quartic curves in $\Hilb(\P^3)$ \\
    $\Hilb_2^{4t}$ & closure in $\Hilb(\P^3)$ of the locus of unions of plane  \\
    & quartic curves with two points in $\P^3$ \\
  \end{tabular}
\end{center}

\section{Preliminaries on Stability Conditions}
\label{sec:prelim}

This section recalls the construction of Bridgeland stability conditions on $\P^3$ due to \cite{BMT14, MacE14}. We refer the reader to \cite{Bri07} for a detailed introduction to the theory of Bridgeland stability. Let $X$ be a smooth projective threefold. A \textit{Bridgeland stability condition} on $D^b(X)$ is a pair $(Z,\AA)$, where $\AA$ is the heart of a bounded t-structure and $Z: K_0(X) = K_0(\AA) \to \C$ is an additive homomorphism that maps any non trivial object in $\AA$ to the upper half-plane or the negative real line. Additionally, technical properties such as the existence of Harder-Narasimhan filtrations and the support property have to be fulfilled. Bridgeland's main result is that the set of stability condition can be given the structure of a complex manifold. We will denote this \textit{stability manifold} by $\Stab(X)$.

Let $H$ be the very ample generator of $\Pic(\P^3)$. Due to the simplicity of the cohomology of $\P^3$, we will abuse notation by writing $\ch_i(E) = H^{3-i} \ch_i(E)$ for any $E \in D^b(X)$. If $\beta \in \R$, we define the \textit{twisted Chern character} by $\ch^{\beta} := e^{-\beta H} \cdot \ch$. In more detail, we have
\begin{align*}
\ch^{\beta}_0 &= \ch_0, \, \, \ch^{\beta}_1 = \ch_1 - \beta \ch_0, \, \, \ch^{\beta}_2 = \ch_2 - \beta \ch_1 + \frac{\beta^2}{2} \ch_0, \\
\ch^{\beta}_3 &= \ch_3 - \beta \ch_2 + \frac{\beta^2}{2} \ch_1 - \frac{\beta^3}{6} \ch_0.
\end{align*}
We write a twisted version of the classical \textit{slope function} as
\[
\mu_{\beta}(\ch_0, \ch_1) := \frac{\ch^{\beta}_1}{\ch^{\beta}_0} = \frac{\ch_1}{\ch_0} - \beta,
\]
where division by $0$ is interpreted as $+\infty$. In \cite{BMT14} the notion of \textit{tilt stability} has been introduced as an auxilliary notion in between classical slope stability and Bridgeland stability on threefolds. We will recall this construction and a few of its properties. Tilting is used to obtain a new heart of a bounded t-structure. For more information on the general theory of tilting we refer to \cite{HRS96}. A torsion pair is defined by
\begin{align*}
\TT_{\beta} &:= \{E \in \Coh(\P^3) : \text{any quotient $E \onto G$ satisfies $\mu_{\beta}(G) > 0$} \}, \\
\FF_{\beta} &:=  \{E \in \Coh(\P^3) : \text{any subsheaf $F \subset E$ satisfies $\mu_{\beta}(F) \leq 0$} \}.
\end{align*}
A new heart of a bounded t-structure is given by the extension closure $\Coh^{\beta}(\P^3) := \langle \FF_{\beta}[1], \TT_{\beta} \rangle$. Equivalently, the objects in $\Coh^{\beta}(\P^3)$ are complexes $E \in D^b(X)$ satisfying $H^i(E) = 0$ for $i \neq 0,-1$, $H^{-1}(E) \in \FF_{\beta}$ and $H^0(E) \in \TT_{\beta}$. Let $\alpha > 0$ be a positive real number. The new slope function is
\[
\nu_{\alpha, \beta}(\ch_0, \ch_1, \ch_2) := \frac{\ch^{\beta}_2 - \frac{\alpha^2}{2} \ch^{\beta}_0}{\ch^{\beta}_1} = \frac{\ch_2 - \beta \ch_1 + \frac{\beta^2}{2} \ch_0 - \frac{\alpha^2}{2} \ch_0}{\ch_1 - \beta \ch_0}.
\]
As in classical slope stability an object $E \in \Coh^{\beta}(\P^3)$ is called \textit{$\nu_{\alpha, \beta}$-(semi)stable} or \textit{tilt (semi)stable} with respect to $(\alpha, \beta)$ if for all short exact sequences $0 \to F \to E \to G \to 0$ in $\Coh^{\beta}(\P^3)$ the inequality $\nu_{\alpha, \beta}(F) < (\leq) \nu_{\alpha, \beta}(G)$ holds. Note that in regard to \cite{BMT14} this slope has been modified by switching $\alpha$ with $\sqrt{3} \alpha$. We prefer this point of view because it will make the walls semicircular. In concrete computations it becomes relevant to restrict the Chern characters of semistable objects. One of the main tools to perform this restriction is the following inequality for semistable objects.

\begin{thm}[{Bogomolov-Gieseker Inequality for Tilt Stability, \cite[Corollary 7.3.2]{BMT14}}]
\label{thm:bg_inequality}
Any $\nu_{\alpha, \beta}$-semistable object $E \in \Coh^{\beta}(\P^3)$ satisfies
\begin{align*}
Q^{\tilt}(E) &:= (\ch_1^{\beta}(E))^2 - 2\ch_0^{\beta}(E)\ch_2^{\beta}(E) \\
&= (\ch_1(E))^2 - 2\ch_0(E)\ch_2(E) \geq 0.
\end{align*}
\end{thm}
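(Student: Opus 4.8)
The plan is to separate the two assertions in the statement. The equality $(\ch_1^{\beta})^2 - 2\ch_0^{\beta}\ch_2^{\beta} = (\ch_1)^2 - 2\ch_0\ch_2$ is purely algebraic: substituting $\ch_0^\beta = \ch_0$, $\ch_1^\beta = \ch_1 - \beta\ch_0$, and $\ch_2^\beta = \ch_2 - \beta\ch_1 + \tfrac{\beta^2}{2}\ch_0$ and expanding, every $\beta$-dependent term cancels. Thus $Q^{\tilt}(E) = \Delta(E) := (\ch_1(E))^2 - 2\ch_0(E)\ch_2(E)$ is independent of $\beta$, and the entire content of the theorem is the inequality $\Delta(E) \ge 0$ for tilt-semistable $E$.

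First I would reduce to the case where $E$ is $\nu_{\alpha,\beta}$-stable. A strictly semistable object admits a Jordan--H\"older filtration whose factors are stable of the same slope $\nu_{\alpha,\beta}$. The key point is a positivity lemma for the quadratic form $\Delta$: if $w_1, w_2 \in K_0(\P^3)$ are nonzero classes with equal finite $\nu_{\alpha,\beta}$-slope and $\Delta(w_i) \ge 0$, then $\Delta(w_1 + w_2) \ge 0$. Equal slope places $w_1$ and $w_2$ on a common line through the origin in the plane of coordinates $(\ch_1^\beta, \ch_2^\beta - \tfrac{\alpha^2}{2}\ch_0)$; writing $\Delta(w_1+w_2) = \Delta(w_1) + \Delta(w_2) + 2B(w_1,w_2)$ for the polarization $B$ of $\Delta$, the mixed term is then controlled by a Cauchy--Schwarz/Hodge-index estimate. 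Iterating over the filtration transfers the inequality from the stable factors to $E$.

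For a stable object I would exploit that $\Delta(E)$ depends only on the fixed class $\ch(E)$, hence is constant as $(\alpha,\beta)$ varies. Using the locally finite wall-and-chamber structure in the $(\alpha,\beta)$ half-plane, I would deform along a path toward the large-volume region $\alpha \to \infty$ at fixed $\beta$. If $E$ remains stable in this limit, then up to shift it is a classical slope-semistable torsion-free sheaf --- or, in the rank-zero case, a torsion sheaf --- and the classical Bogomolov--Gieseker inequality on $\P^3$ yields $\Delta(E) \ge 0$, which by twist-invariance is exactly the claim. If instead $E$ is destabilized at a finite wall, it decomposes into factors of equal $\nu_{\alpha,\beta}$-slope that are numerically simpler (smaller rank, or smaller $\ch_1^\beta$), to which the inequality applies by induction; the positivity lemma above then reassembles $\Delta(E) \ge 0$.

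The main obstacle I anticipate is twofold. First, one must make the induction well-founded by bounding the invariants of the destabilizing factors so that the wall-crossing terminates; this relies on local finiteness of the walls and on checking that crossing a wall genuinely decreases a suitable complexity measure. Second, the identification of tilt-semistable objects in the large-volume limit with classical slope-semistable sheaves, together with the separate treatment of the torsion and shifted cases, is the technical heart of the argument. By contrast, the algebraic twist-invariance and the Cauchy--Schwarz positivity lemma are routine.
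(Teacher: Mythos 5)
The paper does not prove this theorem at all: it is quoted verbatim from the literature, namely \cite[Corollary 7.3.2]{BMT14}, and used as a black box. Your outline is, in substance, a reconstruction of the proof given in that reference (which in turn follows Bridgeland's surface argument): twist-invariance of the discriminant, reduction to stable objects via Jordan--H\"older factors plus a Hodge-index-type positivity lemma for equal-slope classes, a wall-crossing induction toward the large-volume limit $\alpha \to \infty$, and finally the classical Bogomolov--Gieseker inequality for slope-semistable sheaves (with the torsion case $\ch_0 = 0$, where $Q^{\tilt} = \ch_1^2 \geq 0$ is immediate, and shifts handled by $Q^{\tilt}(-v) = Q^{\tilt}(v)$). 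One point of caution: your positivity lemma is false as literally stated --- for two equal-slope classes with $Q^{\tilt}(w_i) \geq 0$ lying in \emph{opposite} cones of the restricted signature-$(1,1)$ form, the cross term $B(w_1,w_2)$ can be negative and $Q^{\tilt}(w_1+w_2) < 0$ is possible; the lemma needs the additional hypothesis that both classes satisfy $\ch_1^{\beta} \geq 0$ (same cone), which is automatic in your application since Jordan--H\"older factors are objects of the heart $\Coh^{\beta}(\P^3)$, so this is an imprecision rather than a gap.
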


Let $v = \ch_{\leq 2}(E) = (v_0, v_1, v_2)$ for some object $E \in D^b(\P^3)$. A \textit{numerical wall} in tilt stability for $v$ is by definition induced by a class $(r,c,d) \in \Z^2 \times \tfrac{1}{2} \Z$ as the set of solutions $(\alpha, \beta)$ to the equation $\nu_{\alpha, \beta}(v) = \nu_{\alpha, \beta}(r,c,d)$, where we assume that this is a non trivial proper solution set. For example throughout this article, we will always choose $v = \ch_{\leq 2}(\II_C)$, where $C \subset \P^3$ is an elliptic quartic curve and study moduli spaces involving these objects.

A subset of a numerical wall is an \textit{actual wall} if the set of stable or semistable objects with class $v$ changes at it. Numerical walls in tilt stability satisfy Bertram's Nested Wall Theorem. For surfaces it was proved in \cite{MacA14}. A proof in the threefold case can be found in \cite{Sch15}.

\begin{thm}[Structure Theorem for Walls in Tilt Stability]
All numerical walls in the following statements are for fixed $v = (v_0, v_1, v_2)$.
\begin{enumerate}
  \item Numerical walls in tilt stability are of the form
  \[x\alpha^2 + x\beta^2 + y\beta + z = 0\]
  for $x = v_0c - v_1r$, $y = 2(v_2r - v_0d)$ and $z = 2(v_1d - v_2c)$. In particular, they are either semicircular walls with center on the $\beta$-axis or vertical rays.
  \item If two numerical walls given by $\nu_{\alpha, \beta}(r,c,d) = \nu_{\alpha, \beta}(v)$ and $\nu_{\alpha, \beta}(r',c',d') = \nu_{\alpha, \beta}(v)$ intersect for any $\alpha \geq 0$, then $(r,c,d)$, $(r',c',d')$ and $v$ are linearly dependent. In particular, the two walls are completely identical.
  \item The curve $\nu_{\alpha, \beta}(v) = 0$ is given by the hyperbola
  \[v_0\alpha^2 - v_0\beta^2 + 2v_1\beta - 2v_2 = 0.\]
  Moreover, this hyperbola intersects all semicircular walls at their top point.
  \item If $v_0 \neq 0$, there is exactly one vertical numerical wall given by $\beta = v_1/v_0$. If $v_0 = 0$ there is no vertical wall.
  \item If a numerical wall has a single point at which it is an actual wall, then all of it is an actual wall.
\end{enumerate}
\end{thm}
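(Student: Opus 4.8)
The plan is to treat parts (1), (3) and (4) as explicit manipulations of the slope $\nu_{\alpha,\beta}$, to derive (2) from a short linear-algebra argument, and to isolate (5) as the only genuinely structural statement. Throughout I would write $w = (r,c,d)$ and abbreviate $s = \tfrac{1}{2}(\beta^2 - \alpha^2)$, so that the numerator of $\nu_{\alpha,\beta}$ on a class $(e_0,e_1,e_2)$ is $e_2 - \beta e_1 + s\,e_0$ and the denominator is $e_1 - \beta e_0$. For (1), I would start from $\nu_{\alpha,\beta}(v) = \nu_{\alpha,\beta}(w)$ and clear denominators to obtain $(v_2 - \beta v_1 + s v_0)(c - \beta r) = (d - \beta c + s r)(v_1 - \beta v_0)$. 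Upon expanding, the two terms carrying $s\beta$ cancel identically, and the surviving $s$-terms collect to $s(v_0 c - v_1 r)$; since $s = \tfrac12(\beta^2-\alpha^2)$, this is exactly what forces the coefficients of $\alpha^2$ and of $\beta^2$ to agree, yielding $x\alpha^2 + x\beta^2 + y\beta + z = 0$ with $x,y,z$ as stated. The dichotomy is then immediate: for $x \neq 0$ one divides through to get a circle $\alpha^2 + \beta^2 + \tfrac{y}{x}\beta + \tfrac{z}{x} = 0$ with center on the $\beta$-axis (a semicircle since $\alpha > 0$), while $x = 0$ leaves the vertical ray $y\beta + z = 0$, the degenerate case $x=y=0$ being excluded by non-triviality.

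Part (4) falls out of the same formula: imposing $x = v_0 c - v_1 r = 0$ with $v_0 \neq 0$ gives $c = v_1 r/v_0$, and substituting into $\beta = -z/y$ produces $\beta = v_1/v_0$ independently of $w$; when $v_0 = 0$ one checks that $x = 0$ forces $y = 0$ as well, so no vertical ray survives. For (3) I would intersect a wall with the locus $\nu_{\alpha,\beta}(v) = 0$: there both numerators vanish, so $(v_1\beta - v_2)/v_0 = s = (c\beta - d)/r$, and solving the resulting linear equation gives $\beta = (v_0 d - v_2 r)/(v_0 c - v_1 r) = -y/(2x)$, which is precisely the $\beta$-coordinate of the center of the wall circle; hence the intersection is the top point. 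Rearranging $\nu_{\alpha,\beta}(v) = 0$ into $v_0\alpha^2 - v_0\beta^2 + 2v_1\beta - 2v_2 = 0$ exhibits the hyperbola.

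The cleanest route to (2) is to observe that at a common point $(\alpha_0,\beta_0)$ of two walls one has $\nu_{\alpha_0,\beta_0}(v) = \nu_{\alpha_0,\beta_0}(w) = \nu_{\alpha_0,\beta_0}(w') =: \nu_0$. Writing the numerator and denominator of $\nu_{\alpha_0,\beta_0}$ as linear functionals $a$ and $b$ on $(\ch_0,\ch_1,\ch_2)$, the functional $a - \nu_0 b$ has $\ch_2$-coefficient equal to $1$, hence is nonzero with two-dimensional kernel, and it annihilates $v$, $w$ and $w'$. Therefore these three classes are linearly dependent. Finally, the coefficient triple $(x,y,z)$ is, up to reordering and rescaling of its entries, the cross product $v \times w$; so dependence of $v, w, w'$ forces the coefficient triples of the two walls to be proportional, and the walls coincide.

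The real work is part (5). Here the plan is: given a point $p_0 = (\alpha_0,\beta_0)$ on a numerical wall $W$ at which the set of semistable objects changes, produce a short exact sequence $0 \to F \to E \to G \to 0$ in $\Coh^{\beta_0}(\P^3)$ with $E$ of class $v$, $F$ of the class $w$ defining $W$, and $\nu_{\alpha_0,\beta_0}(F) = \nu_{\alpha_0,\beta_0}(E)$. By the definition of $W$ this equality of slopes persists at every point of $W$; the content to be established is that $F$, $E$ and $G$ remain objects of the tilted heart and that $F \hookrightarrow E$ stays a monomorphism in $\Coh^{\beta}(\P^3)$ as one moves along $W$. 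I expect this to be the main obstacle, since it requires controlling how the heart $\Coh^{\beta}$ and the sub/quotient relations vary. The key leverage is part (2): because distinct numerical walls for $v$ cannot cross, no further destabilization can appear or disappear between two points of $W$, so the Harder--Narasimhan type of $E$ is locally constant along $W$; combined with the semicircular geometry of (1), which confines the relevant objects to the heart, this shows that $E$ is strictly semistable along all of $W$ while stable on an adjacent chamber, i.e. $W$ is an actual wall at every one of its points. I would carry out the heart-membership bookkeeping following the threefold argument in \cite{Sch15}.
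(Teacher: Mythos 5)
The first thing to note is that the paper does not prove this theorem at all: it is stated as a known result (Bertram's Nested Wall Theorem), with the surface case attributed to \cite{MacA14} and the threefold case to \cite{Sch15}, so there is no in-paper argument to compare yours against. Judged on its own, your treatment of parts (1)--(4) is correct and is essentially the computation found in those references: clearing denominators in $\nu_{\alpha,\beta}(v)=\nu_{\alpha,\beta}(r,c,d)$ does yield $x\alpha^2+x\beta^2+y\beta+z=0$ with the stated coefficients (the $s\beta$-cancellation you point out is exactly what makes the $\alpha^2$ and $\beta^2$ coefficients agree), the linear-functional argument for (2) is the standard and cleanest one, the observation that $(x,y,z)$ is up to rescaling of entries the cross product $v\times(r,c,d)$ correctly gives ``the two walls are completely identical,'' and (3), (4) check out. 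Two small points to tighten: in (3) your argument as written shows that any intersection point of a wall with the hyperbola must be the top point, but the theorem also asserts existence of the intersection; this follows by noting that the solution of the system $\{\mathrm{num}(v)=0,\ \mathrm{num}(r,c,d)=0\}$ automatically satisfies the cleared wall equation and hence is the point of the circle over its center (equivalently, verify the identity $2v_2x+v_1y+v_0z=0$). In (2) you should also say a word about intersection points where a denominator vanishes, so that $\nu_0$ is not a finite number; there one uses the denominator functional $b$ alone.

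Part (5) is where your proposal stops short of a proof, as you yourself acknowledge. The phrase ``the Harder--Narasimhan type of $E$ is locally constant along $W$'' hides precisely the difficulty: HN filtrations are taken in the heart $\Coh^{\beta}(\P^3)$, which changes with $\beta$, so there is no a priori meaning to ``the same filtration'' at two different points of $W$. The correct mechanism, which is what \cite{Sch15} actually carries out, is: take the destabilizing sequence $0\to F\to E\to G\to 0$ at one point of $W$; the numerical wall determined by $\ch_{\leq 2}(F)$ and $v$ passes through that point, hence coincides with $W$ by part (2), so the slopes agree along all of $W$; then one must prove that $F$, $E$, $G$ remain objects of $\Coh^{\beta}(\P^3)$ at every point of $W$ and that the maps remain an injection and a surjection in the varying heart --- that bookkeeping is the genuine content of (5), not a routine verification. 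Deferring it to \cite{Sch15} is defensible here, since it is exactly what the paper itself does for the entire theorem, but you should be clear that your (5) is a reduction to that reference rather than a proof.
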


On smooth projective surfaces tilt stability is enough to get a Bridgeland stability condition (see \cite{Bri08, AB13}). On threefolds Bayer, Macr\`i and Toda proposed another tilt to obtain a suitable category to define a Bridgeland stability condition as follows. Let
\begin{align*}
\TT'_{\alpha, \beta} &:= \{E \in \Coh^{\beta}(\P^3) : \text{any quotient $E \onto G$ satisfies $\nu_{\alpha, \beta}(G) > 0$} \}, \\
\FF'_{\alpha, \beta} &:=  \{E \in \Coh^{\beta}(\P^3) : \text{any subobject $F \into E$ satisfies $\nu_{\alpha, \beta}(F) \leq 0$} \}
\end{align*}
and set $\AA^{\alpha, \beta} := \langle \FF'_{\alpha, \beta}[1], \TT'_{\alpha, \beta} \rangle $. For any $s>0$ they define
\begin{align*}
Z_{\alpha,\beta,s} &:= -\ch^{\beta}_3 + (s+\tfrac{1}{6})\alpha^2 \ch^{\beta}_1 + i (\ch^{\beta}_2 - \frac{\alpha^2}{2} \ch^{\beta}_0), \\
\lambda_{\alpha,\beta,s} &:= -\frac{\Re(Z_{\alpha,\beta,s})}{\Im(Z_{\alpha,\beta,s})}.
\end{align*}
In order to prove that this yields a Bridgeland stability condition, Bayer, Macr\`i, and Toda conjectured a generalized Bogomolov-Gieseker inequality involving third Chern characters for tilt semistable objects with $\nu_{\alpha, \beta} = 0$. In \cite{BMS16} it was shown that the conjecture is equivalent to a more general inequality that drops the hypothesis $\nu_{\alpha, \beta} = 0$. In the case of $\P^3$ the inequality was proved in \cite{MacE14}. Recall the definition of $Q^{\tilt}$ from Theorem \ref{thm:bg_inequality}.

\begin{thm}[BMT Inequality]
Any $\nu_{\alpha, \beta}$-stable object $E \in \Coh^{\beta}(\P^3)$ satisfies
\[ \alpha^2 Q^{\tilt}(E) + 4(\ch_2^{\beta}(E))^2 - 6 \ch_1^{\beta}(E) \ch_3^{\beta}(E) \geq 0. \]
\end{thm}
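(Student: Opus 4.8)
The statement is Macr\`i's theorem \cite{MacE14}, a genuinely deep result that the present paper only invokes, so the plan can only be to recount the skeleton of the known argument rather than to produce something new. The first move is a purely formal reduction, supplied by \cite{BMS16}: the displayed inequality for \emph{all} tilt-stable objects is equivalent to the a priori weaker bound
\[
\ch_3^\beta(E) \le \tfrac{\alpha^2}{6}\,\ch_1^\beta(E)
\]
for those tilt-stable $E$ lying on the boundary locus $\nu_{\alpha,\beta}(E)=0$. Indeed, on that locus one has $\ch_2^\beta(E)=\tfrac{\alpha^2}{2}\ch_0^\beta(E)$, and substituting this into the left-hand side collapses the first two terms:
\[
\alpha^2 Q^{\tilt}(E) + 4\bigl(\ch_2^\beta(E)\bigr)^2 - 6\,\ch_1^\beta(E)\,\ch_3^\beta(E) = \ch_1^\beta(E)\bigl(\alpha^2\ch_1^\beta(E) - 6\,\ch_3^\beta(E)\bigr).
\]
Since $\ch_1^\beta(E)\ge 0$ for every object of $\Coh^\beta(\P^3)$, and is strictly positive on the locus $\nu_{\alpha,\beta}=0$ (where $\nu_{\alpha,\beta}$ is finite), the sign is controlled entirely by the reduced inequality. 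Thus the whole problem is to bound the single invariant $\ch_3$, which tilt stability does not constrain directly.

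The second step is to exploit symmetry and the wall structure to set up an induction. Tensoring by $\OO(k)$ is an autoequivalence preserving tilt stability and translating $\beta\mapsto\beta+k$, while the derived dual reflects $\beta\mapsto-\beta$; combining these confines $\beta$ to a bounded fundamental domain. Within it I would invoke the Structure Theorem for Walls: the locus $\nu_{\alpha,\beta}(E)=0$ is the fixed hyperbola of part (3), the actual walls for $v=\ch_{\leq 2}(E)$ meet it only at their top points, and $E$ stays tilt-stable as one travels along the hyperbola until such a top point is reached. This organizes the argument as an induction on the tilt discriminant $Q^{\tilt}(E)=(\ch_1(E))^2-2\ch_0(E)\ch_2(E)\ge 0$ (Theorem \ref{thm:bg_inequality}): if $E$ is destabilized somewhere along the hyperbola, its Jordan-H\"older factors again satisfy $\nu_{\alpha,\beta}=0$ but have strictly smaller discriminant, and because $\ch_1^\beta$ and $\ch_3^\beta$ are additive, the reduced inequality for the factors sums to give it for $E$.

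The base of the induction -- objects that remain stable along the entire hyperbola, in particular those of minimal discriminant -- is where the specific geometry of $\P^3$ must enter. Here the plan is to use the full strong exceptional collection of line bundles $\OO(-1),\OO,\OO(1),\OO(2)$ (any consecutive four twists work): in the appropriate limiting region of parameters the heart $\Coh^\beta(\P^3)$ is governed by the finite-dimensional algebra of this collection, so that an extremal stable object is forced, up to shift and twist, into a short explicit list (the line bundles $\OO(k)$ and the structure sheaves of points and ideal sheaves of lines attached to them). For each such object one bounds $\ch_3$ by relating it, via Hirzebruch-Riemann-Roch, to $\chi(E)=\int_{\P^3}\ch(E)\,\td(\P^3)$, in which $\ch_3$ appears linearly, and then using stability to force the vanishing of the relevant groups $\Hom(\OO(k),E[i])$ so that $\chi$ becomes a controllable dimension count.

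The main obstacle is exactly the gap exposed by the reduction: tilt stability sees only $\ch_{\leq 2}$, so no formal manipulation can control $\ch_3$, and the bound is not a corollary of the Bogomolov-Gieseker inequality. Genuinely new input tied to $\P^3$ -- its exceptional collection, the associated monad/quiver description, and the resulting cohomology vanishing -- is unavoidable, which is precisely why this is a theorem about $\P^3$ rather than a formal consequence of what precedes it. Keeping rigorous track of which Jordan-H\"older factors can appear along the hyperbola, and ruling out pathological extremal objects through the quiver description, is the technically delicate heart of the argument.
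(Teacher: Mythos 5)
The paper offers no proof of this statement: it is quoted as a known theorem, with the reduction away from the locus $\nu_{\alpha,\beta}=0$ attributed to \cite{BMS16} and the inequality itself to \cite{MacE14}, so there is no internal argument to measure yours against. On its own terms your outline is an accurate reconstruction of the cited proof. The explicit computation is correct: on $\nu_{\alpha,\beta}(E)=0$ one has $\ch_2^{\beta}(E)=\tfrac{\alpha^2}{2}\ch_0^{\beta}(E)$, the terms $-2\alpha^2\ch_0^{\beta}\ch_2^{\beta}$ and $4(\ch_2^{\beta})^2$ cancel to $0$, and the expression collapses to $\ch_1^{\beta}(E)\bigl(\alpha^2\ch_1^{\beta}(E)-6\ch_3^{\beta}(E)\bigr)$ with $\ch_1^{\beta}(E)>0$, which is exactly the reduction of \cite{BMS16} to the original BMT formulation. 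The two-stage architecture you describe --- induction on the discriminant via Jordan--H\"older factors at walls meeting the hyperbola at their top points, then a base case that must use the geometry of $\P^3$ through its exceptional collection, $\Hom(\OO(k),E[i])$-vanishing forced by stability, and Riemann--Roch to control $\ch_3$ --- is the shape of Macr\`i's argument. Two caveats, both of which you partly flag yourself: the published base case does not proceed by listing extremal stable objects but by identifying the relevant heart near rational $\beta$ with modules over the quiver algebra of the exceptional collection and reading the bound off Euler characteristics directly; and the discriminant induction needs separate care for factors with $Q^{\tilt}=0$ (line bundles and their shifts), where the discriminant does not strictly drop. As a blind sketch of an imported theorem this is sound; it is not, and does not claim to be, a complete proof.
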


Similar inequalities were proved for the smooth quadric threefold \cite{Sch14} and all abelian threefolds \cite{BMS16, MP13a, MP13b}. Recently, the inequality has also been generalized to all Fano threefolds of Picard rank $1$ in \cite{Li15}. By using the definition of $\ch^{\beta}(E)$, one finds $x(E),y(E) \in \R$ such that the BMT Inequality becomes
\[\alpha^2 Q^{\tilt}(E) + \beta^2 Q^{\tilt}(E) + x(E)\beta + y(E) \geq 0.\]
This means the solution set is given by the complement of a semi-disc with center on the $\beta$-axis or a quadrant to one side of a vertical line.

Using the same proof as in the surface case in \cite[Proposition 14.1]{Bri08} leads to the following lemma. It allows to identify the moduli space of slope stable sheaves as a moduli space of tilt stable objects.

\begin{lem}
\label{lem:stable_sheaves_stable_in_tilt}
Let $v = (v_0, v_1, v_2, v_3) \in K_0(\P^3)$ such that $\beta < \mu(v)$ and $(v_0, v_1)$ is primitive. Then an object $E$ with $\ch(E) = v$ is $\nu_{\alpha, \beta}$-stable for all $\alpha \gg 0$ if and only if $E$ is a slope stable sheaf.
\end{lem}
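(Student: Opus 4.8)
The plan is to mimic the surface case proof from \cite[Proposition 14.1]{Bri08}, adapting it to the threefold tilt stability setting, and to exploit the semicircular structure of walls guaranteed by the Structure Theorem for Walls in Tilt Stability. The statement is a biconditional, so I would prove the two implications separately, with the $\Leftarrow$ direction (slope stable $\Rightarrow$ tilt stable for $\alpha \gg 0$) being the substantive one.

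\textbf{Forward direction.} Suppose $E$ is $\nu_{\alpha,\beta}$-stable for all $\alpha \gg 0$. First I would argue that $E$ is a genuine sheaf sitting in $\TT_\beta \subset \Coh^\beta(\P^3)$, rather than a complex with nonzero $H^{-1}$. Any potential destabilizer coming from the cohomology sheaves would have to respect the tilt slope for all large $\alpha$; since $\beta < \mu(v)$ places $v$ in the torsion part $\TT_\beta$, the object is a sheaf. Then I would observe that a slope-destabilizing subsheaf $F \subset E$ would, for $\alpha$ large enough, also violate the $\nu_{\alpha,\beta}$ inequality, because as $\alpha \to \infty$ the tilt slope $\nu_{\alpha,\beta}$ is dominated by the $-\tfrac{\alpha^2}{2}\ch_0^\beta / \ch_1^\beta$ term, whose ordering is controlled precisely by the classical slope $\mu_\beta$. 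Hence tilt stability for large $\alpha$ forces slope stability.

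\textbf{Reverse direction.} This is where the primitivity hypothesis on $(v_0, v_1)$ does its work. Assume $E$ is slope stable. I would suppose for contradiction that $E$ is strictly $\nu_{\alpha,\beta}$-destabilized for arbitrarily large $\alpha$, giving a short exact sequence $0 \to F \to E \to G \to 0$ in $\Coh^\beta(\P^3)$ with $\nu_{\alpha,\beta}(F) \geq \nu_{\alpha,\beta}(E)$ for a sequence of $\alpha \to \infty$. By the Structure Theorem, numerical walls are semicircles with centers on the $\beta$-axis, hence bounded; so a fixed destabilizing subobject can only produce an actual wall at bounded $\alpha$. The only way to destabilize for \emph{all} large $\alpha$ is along the unique vertical wall $\beta = v_1/v_0$, or else via a subobject whose class is proportional to $v$. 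Here I would use that $(v_0, v_1)$ is primitive: a proportional class of a sheaf quotient cannot arise nontrivially, so any such $F$ would force a strictly smaller slope, contradicting slope stability of $E$. The asymptotic analysis of $\nu_{\alpha,\beta}$ as $\alpha \to \infty$ again reduces the tilt inequality to the classical slope inequality, which $E$ satisfies strictly.

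\textbf{The main obstacle} I anticipate is the careful bookkeeping at $\alpha \to \infty$: showing that no destabilizing sequence can persist to infinity requires ruling out subobjects $F$ with $\ch_0^\beta(F) = 0$ (whose tilt slope behaves differently, tending to $+\infty$) and handling the boundary behavior on the vertical wall $\beta = v_1/v_0$. These degenerate cases are exactly where the primitivity of $(v_0,v_1)$ and the hypothesis $\beta < \mu(v)$ must be combined to exclude equality in the slope inequality. The bulk of the argument is the large-$\alpha$ asymptotic comparison of $\nu_{\alpha,\beta}$ with $\mu_\beta$, which is a direct computation from the definitions of the two slope functions and the boundedness of semicircular walls.
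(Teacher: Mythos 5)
Your proposal is correct in outline and follows essentially the same route as the paper, which gives no independent argument but simply invokes the large--volume--limit proof of \cite[Proposition 14.1]{Bri08} and asserts that it carries over; your sketch (large-$\alpha$ asymptotics of $\nu_{\alpha,\beta}$ reducing the comparison to $\mu_\beta$, primitivity of $(v_0,v_1)$ to exclude ties, and separate treatment of subobjects with $\ch_0^\beta=0$ and of the vertical wall) is precisely that adaptation. The bookkeeping you flag -- bounding the classes of potential destabilizers uniformly so that no sequence of walls escapes to $\alpha=\infty$ -- is indeed where the remaining work lies, and it is handled in the cited surface argument.
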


An important question is how moduli spaces change set theoretically at walls in Bridgeland stability. In case the destabilizing subobject and quotient are both stable this has a satisfactory answer, and a proof can for example be found in \cite[Lemma 3.10]{Sch15}. Note that this does not work in the case of tilt stability due to the lack of unique Jordan-H\"older filtrations.

\begin{lem}

\label{lem:wall_crossing}
Let $\sigma = (\AA, Z) \in \Stab(\P^3)$ such that there are stable object $F,G \in \AA$ with $\mu_{\sigma}(F) = \mu_{\sigma}(G)$. Then there is an open neighborhood $U$ around $\sigma$ where non trivial extensions $0 \to F \to E \to G \to 0$ are stable for all $\sigma' \in U$ where $F \into E$ does not destabilize $E$.
\end{lem}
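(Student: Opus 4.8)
The plan is to argue by contradiction, using the support property to bound the potential destabilizers and then a short Jordan--H\"older analysis to pin them down. First I would pass to a small ball $U$ around $\sigma$ on which the heart $\AA$ is unchanged and the central charge $Z'$ varies continuously, so that $F$, $G$ and $E$ all remain objects of $\AA$ and, for each fixed class, $\mu_{\sigma'}$ is a continuous function of $\sigma'$. Since $\mu_\sigma(F) = \mu_\sigma(G) = \mu_\sigma(E)$ and the extension is non-trivial, $E$ is strictly $\sigma$-semistable with Jordan--H\"older factors exactly $F$ and $G$. The locus where $F \into E$ does not destabilize is precisely $\{\sigma' : \mu_{\sigma'}(F) < \mu_{\sigma'}(E)\}$, which I will call the good side; on it every subobject of class $[F]$ has slope strictly below that of $E$ and hence cannot destabilize.

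Next, suppose the claim fails. Then there is a sequence $\sigma_n \to \sigma$ on the good side together with proper nonzero subobjects $A_n \into E$ in $\AA$ satisfying $\mu_{\sigma_n}(A_n) \geq \mu_{\sigma_n}(E)$. This is where the main obstacle lies, and it is resolved by the support property: the inclusions $A_n \into E$ force $0 \leq \Im Z(A_n) \leq \Im Z(E)$, and together with the quadratic-form constraints $Q([A_n]) \geq 0$ and $Q([E]-[A_n]) \geq 0$ supplied by the support property this cuts out only finitely many classes in a neighborhood of $\sigma$ (local finiteness of walls). Passing to a subsequence I may therefore assume $[A_n] = w$ is constant. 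Taking $n \to \infty$ in $\mu_{\sigma_n}(A_n) \geq \mu_{\sigma_n}(E)$ and using continuity gives $\mu_\sigma(w) \geq \mu_\sigma(E)$, while $\sigma$-semistability of $E$ forces $\mu_\sigma(w) \leq \mu_\sigma(E)$; hence $\mu_\sigma(w) = \mu_\sigma(E)$, so $w$ lies on the wall.

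It then remains to identify these wall-subobjects. A subobject $A \into E$ with $\mu_\sigma(A) = \mu_\sigma(E)$ is automatically $\sigma$-semistable of that slope, since any of its subobjects is also a subobject of the semistable $E$, and the full subcategory of $\sigma$-semistable objects of this slope is abelian. Considering the composite $A \to E \onto G$: if it vanishes then $A \into \ker(E \onto G) = F$, and a same-slope subobject of the stable $F$ must equal $F$; if it is nonzero its image is a same-slope nonzero subobject of the stable $G$, hence all of $G$, and then $K = \ker(A \onto G) \into F$ is either $0$, giving $A \cong G$ and splitting the sequence, or $F$, giving $A = E$ and contradicting properness. In every surviving case $A = F$, so $w = [F]$ and $\mu_{\sigma_n}(A_n) = \mu_{\sigma_n}(F) < \mu_{\sigma_n}(E)$ on the good side, contradicting $\mu_{\sigma_n}(A_n) \geq \mu_{\sigma_n}(E)$. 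The finiteness step is exactly what upgrades this from a statement along a sequence to a single uniform neighborhood $U$, and I expect the Jordan--H\"older bookkeeping to be routine once that boundedness is in hand; the essential content is therefore the support-property argument of the second paragraph.
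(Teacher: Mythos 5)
The paper does not actually prove this lemma; it cites \cite[Lemma 3.10]{Sch15}, and your two-step strategy --- local finiteness of destabilizing classes via the support property, followed by a Jordan--H\"older analysis showing that $F$ is the only proper nonzero subobject of $E$ of the wall slope --- is exactly the standard route taken there. Your third paragraph is correct and complete: the composite $A \to G$ either vanishes (forcing $A = F$ by stability of $F$) or is onto $G$ (forcing a splitting or $A = E$), so on the good side no destabilizer of class $[F]$ can exist. The problems are in the setup that is supposed to reduce everything to that analysis.

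Two steps would fail as literally written. First, there is no ball around $\sigma$ on which the heart is unchanged: every neighborhood of $\sigma$ in $\Stab(\P^3)$ contains stability conditions with a different heart (already the rotations $\sigma \cdot e^{i\pi\epsilon}$ do this), and in the application of this paper the hearts $\AA^{\alpha,\beta}$ genuinely vary. This is not cosmetic, because your limiting argument treats the $A_n \into E$ as monomorphisms in the single category $\AA$ and then runs the Jordan--H\"older analysis on $A_n$ \emph{at} $\sigma$; the standard repair is Bridgeland's deformation estimate, which places the $\sigma'$-HN factors of a $\sigma$-semistable object of phase $\phi$ in the quasi-abelian category $\mathcal{P}_{\sigma}\bigl((\phi-\eta,\phi+\eta)\bigr)$ for all $\sigma'$ near $\sigma$, so that subobjects can be compared across nearby stability conditions there. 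Second, the constraints you list do not cut out finitely many classes: $0 \leq \Im Z(A_n) \leq \Im Z(E)$ together with $Q([A_n]) \geq 0$ and $Q([E]-[A_n]) \geq 0$ leave $\Re Z(A_n)$ unbounded (classes drifting along a direction on which $Q$ is nonnegative but $Z$ is not purely imaginary satisfy all three conditions), and the support property bound $\lVert [A_n]\rVert \leq C\lvert Z(A_n)\rvert$ is useless until $\lvert Z(A_n)\rvert$ itself is bounded. One must take $A_n$ to be the extremal HN factor, use the deformation estimate again to confine the phases of $A_n$ and $E/A_n$ to within $\eta$ of that of $E$, deduce $\lvert Z(A_n)\rvert \leq \lvert Z(E)\rvert/\cos(\pi\eta)$, and only then invoke the support property. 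Finally, a minor point: the neighborhood $U$ must work simultaneously for all extensions in $\P\Ext^1(G,F)$, while your sequence argument fixes one $E$; this is harmless since the finite list of candidate classes depends only on $[E]=[F]+[G]$, but it should be said.
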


Another crucial issue is the construction of reasonably behaved moduli spaces of Bridgeland stable objects.  A recent result by Piyaratne and Toda is a major step towards this. It applies in particular to the case of $\P^3$, since the conjectural BMT-inequality is known.

\begin{thm}[{\cite{PT15}}]
Let $X$ be a smooth projective threefold such that the conjectural construction of Bridgeland stability from \cite{BMT14} works. Then any moduli space of semistable objects for such a Bridgeland stability condition is a universally closed algebraic stack of finite type over $\C$.
\end{thm}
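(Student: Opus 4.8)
\emph{The statement is quoted from \cite{PT15}, and its proof is a substantial undertaking; I sketch the strategy one would follow, indicating where the hypothesis enters.}

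The plan is to realize the functor of $Z_{\alpha,\beta,s}$-semistable objects of fixed class $v$ as a substack of the stack of all objects of $\Db(X)$, and then to verify algebraicity, finite type, and universal closedness in turn. First I would invoke Lieblich's theorem that the stack $\mathcal{M}$ parametrizing perfect, universally gluable complexes on $X$ (those $E$ with $\Ext^{<0}(E,E)=0$) is an algebraic stack locally of finite type over $\C$; every Bridgeland semistable object, being in the heart, satisfies this vanishing after a shift, so it lives in $\mathcal{M}$. Next I would carve out the objects lying in the heart $\AA^{\alpha,\beta}$. Using the Abramovich--Polishchuk formalism for relative t-structures, the bounded t-structure with heart $\AA^{\alpha,\beta}$ extends over an arbitrary base, so the condition that a flat family lie fibrewise in $\AA^{\alpha,\beta}$ is open; this produces an open substack $\mathcal{M}_{\AA}\subset\mathcal{M}$.

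The crux is \textbf{boundedness}: the family of $Z_{\alpha,\beta,s}$-semistable objects of fixed numerical class $v$ must be bounded. This is exactly the point at which the hypothesis that ``the conjectural construction works'' is used, for it amounts to the BMT inequality holding on $X$, which in turn gives the support property. Concretely, a Harder--Narasimhan and Jordan--H\"older analysis in the tilted hearts, controlled by the tilt Bogomolov--Gieseker inequality $Q^{\tilt}(E)=(\ch_1^{\beta}(E))^2-2\ch_0^{\beta}(E)\ch_2^{\beta}(E)\ge 0$, bounds $\ch_{\leq 2}$ of all sub- and quotient objects; then the BMT inequality $\alpha^2 Q^{\tilt}(E)+4(\ch_2^{\beta}(E))^2-6\ch_1^{\beta}(E)\ch_3^{\beta}(E)\ge 0$ pins down $\ch_3$ once $\ch_{\leq 2}$ is fixed. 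One converts this numerical control into an honest bounded family by a Grothendieck/$\Quot$-scheme argument. Combined with the openness of semistability within a bounded family, this shows that the semistable locus $\mathcal{M}^{ss}_v$ is an open substack of finite type over $\C$.

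Finally, universal closedness follows from the valuative criterion. Given a family of semistable objects over the punctured spectrum of a discrete valuation ring, boundedness lets one spread out, and the existence and uniqueness of Harder--Narasimhan filtrations, together with Jordan--H\"older filtrations within a fixed slope, produce a canonical semistable limit over the closed point; hence every such family extends. The hard part is unquestionably the boundedness step: without the generalized Bogomolov--Gieseker (BMT) inequality there is no way to control $\ch_3$ and the argument collapses, which is precisely why the theorem is stated under the hypothesis that the BMT construction of \cite{BMT14} is available---a hypothesis verified on $\P^3$ in \cite{MacE14}.
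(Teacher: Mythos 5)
The paper gives no proof of this statement: it is imported verbatim from \cite{PT15}, so there is nothing internal to compare your sketch against. Your outline does faithfully reflect the strategy of that reference (Lieblich's stack of universally gluable complexes, openness of the heart and of semistability via Abramovich--Polishchuk-type arguments, boundedness as the crux where the BMT inequality enters, then the valuative criterion), and you correctly locate where the hypothesis is used. The one place your sketch is thinner than the actual argument is universal closedness: in \cite{PT15} the semistable limit over the closed point of the DVR is produced by a Langton-style sequence of elementary modifications along destabilizing subobjects, and the nontrivial point is that this process terminates; ``HN and JH filtrations produce a canonical limit'' glosses over that termination argument.
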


If there are no strictly semistable objects, the moduli space becomes a proper algebraic space of finite type over $\C$.

Our strategy to compute concrete wall crossing follows that of \cite{Sch15}. We do numerical computations in tilt stability and then translate them into Bridgeland stability. Let $v = (v_0, v_1, v_2, v_3)$ be the Chern character of an object in $D^b(X)$. For any $\alpha > 0$, $\beta \in \R$ and $s > 0$ we denote the set of $\lambda_{\alpha, \beta, s}$-semistable objects with Chern character $v$ by $M_{\alpha, \beta, s}(v)$ and the set of $\nu_{\alpha, \beta}$-semistable objects with Chern character $v$ by $M^{\tilt}_{\alpha, \beta, s}(v)$. Analogously to our notation for twisted Chern characters, we write $v^{\beta} := (v^{\beta}_0, v^{\beta}_1, v^{\beta}_2, v^{\beta}_3) = v \cdot e^{-\beta H}$. We also write
\[
P_v := \{(\alpha, \beta) \in \R_{\geq 0} \times \R : \nu_{\alpha, \beta}(v) > 0 \}.
\]
We need the following technical statement. Under mild hypotheses, it says that on one side of the hyperbola $\{ \nu_{\alpha, \beta}(v) = 0 \}$ all the chambers and walls of tilt stability occur in Bridgeland stability. Note that $\nu_{\alpha, \beta}(v) = 0$ implies $\lambda_{\alpha, \beta, s}(v) = \infty$. This is a crucial fact in establishing the following relation between walls in tilt stability and walls in Bridgeland stability.

\begin{thm}[{\cite[Theorem 6.1]{Sch15}}]
\label{thm:wall_intersecting_hyperbola}
Let $\alpha_0 > 0$, $\beta_0 \in \R$ and $s > 0$ such that $\nu_{\alpha_0, \beta_0}(v) = 0$ and $v^{\beta_0}_1 > 0$.
\begin{enumerate}
  \item Assume there is an actual wall in Bridgeland stability for $v$ at $(\alpha_0, \beta_0)$ given by
  \[0 \to F \to E \to G \to 0.\]
  That means $\lambda_{\alpha_0, \beta_0, s}(F) = \lambda_{\alpha_0, \beta_0, s}(G)$ and $\ch(E) = -v$ for semistable $E,F,G \in \AA^{\alpha_0, \beta_0}(\P^3)$. Further assume there is a neighborhood $U$ of $(\alpha_0, \beta_0)$ such that the same sequence also defines an actual wall in $U \cap P_v$, i.e. $E,F,G$ remain semistable in $U \cap P_v \cap \{ \lambda_{\alpha, \beta, s}(F) = \lambda_{\alpha, \beta, s}(G)\}$. Then $E[-1]$, $F[-1]$, $G[-1] \in \Coh^{\beta_0}(\P^3)$ are $\nu_{\alpha_0, \beta_0}$-semistable. In particular, there is an actual wall in tilt stability at $(\alpha_0, \beta_0)$.
  \item Assume that all $\nu_{\alpha_0, \beta_0}$-semistable objects with class $v$ are stable. Then there is a neighborhood $U$ of $(\alpha_0, \beta_0)$ such that
  \[M_{\alpha, \beta, s}(v) = M^{\tilt}_{\alpha, \beta}(v)\]
  for all $(\alpha, \beta) \in U \cap P_v$. Moreover, in this case all objects in $M_{\alpha, \beta, s}(v)$ are $\lambda_{\alpha, \beta, s}$-stable.
  \item Assume there is an actual wall in tilt stability for $v$ at $(\alpha_0, \beta_0)$ given by
  \[0 \to F^n \to E \to G^m \to 0\]
  such that $F, G \in \Coh^{\beta_0}(\P^3)$ are $\nu_{\alpha_0, \beta_0}$-stable objects, $\ch(E) = v$ and $\nu_{\alpha_0, \beta_0}(F) = \nu_{\alpha_0, \beta_0}(G)$. Assume further that the set
  \[P_v \cap P_{\ch(F)} \cap P_{\ch(G)} \cap \{ \lambda_{\alpha, \beta, s}(F) = \lambda_{\alpha, \beta, s}(G)\}\]
  is non-empty. Then there is a neighborhood $U$ of $(\alpha_0, \beta_0)$ such that $F,G$ are $\lambda_{\alpha, \beta, s}$-stable for all $(\alpha, \beta) \in U \cap P_v \cap \{ \lambda_{\alpha, \beta, s}(F) = \lambda_{\alpha, \beta, s}(G)\}$. In particular, there is an actual wall in Bridgeland stability in $U \cap P_v$ defined by the same sequence.
\end{enumerate}
\end{thm}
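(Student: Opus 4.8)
The guiding principle is the identity $\Im Z_{\alpha,\beta,s}(w) = \ch_1^{\beta}(w)\,\nu_{\alpha,\beta}(w)$, valid for every class $w$, together with the hypothesis $v^{\beta_0}_1 > 0$. Since $\ch_1^{\beta}$ is continuous and stays positive for classes near $v$ in a neighborhood of $(\alpha_0,\beta_0)$, on the hyperbola $\{\nu_{\alpha,\beta}(v) = 0\}$ one has $\Im Z_{\alpha,\beta,s}(v) = 0$ and hence $\lambda_{\alpha,\beta,s}(v) = \infty$, while moving into $P_v$ makes $\Im Z_{\alpha,\beta,s}(v) > 0$ small and $\lambda_{\alpha,\beta,s}(v)$ large but finite. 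The second structural input I would set up first is the dictionary between the two hearts: for any short exact sequence $0 \to A \to E \to B \to 0$ in $\AA^{\alpha,\beta}$ with $E \in \TT'_{\alpha,\beta}$ (equivalently $E \in \Coh^{\beta}(\P^3)$ with $\nu_{\alpha,\beta}(E) > 0$), the long exact $\Coh^{\beta}$-cohomology sequence forces $H^{-1}(A) = 0$, so $A \in \TT'_{\alpha,\beta} \subset \Coh^{\beta}(\P^3)$, and produces the four-term exact sequence $0 \to H^{-1}(B) \to A \to E \to H^0(B) \to 0$ in $\Coh^{\beta}(\P^3)$. This is what converts a Bridgeland (de)stabilizer into an honest $\Coh^{\beta}$-subobject of $E$ and back.

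I would prove part (2) first, since it is the heart of the matter. Let $E$ be $\nu_{\alpha_0,\beta_0}$-stable of class $v$; as tilt-stability is an open condition, $E$ remains tilt-stable throughout a neighborhood $V$ of $(\alpha_0,\beta_0)$. For $(\alpha,\beta) \in V \cap P_v$, tilt-semistability gives $\nu_{\alpha,\beta}(Q) \geq \nu_{\alpha,\beta}(E) > 0$ for every $\Coh^{\beta}$-quotient $Q$, so $E \in \TT'_{\alpha,\beta} \subset \AA^{\alpha,\beta}$. To see $E$ is $\lambda$-stable near the hyperbola, suppose a destabilizing sequence $0 \to A \to E \to B \to 0$ in $\AA^{\alpha,\beta}$ with $\lambda_{\alpha,\beta,s}(A) \geq \lambda_{\alpha,\beta,s}(E)$ occurs at points of $P_v$ accumulating at $(\alpha_0,\beta_0)$. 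Additivity of $\Im Z_{\alpha,\beta,s}$ and the constraint $\Im Z_{\alpha,\beta,s} \geq 0$ on $\AA^{\alpha,\beta}$ force $\Im Z_{\alpha,\beta,s}(A), \Im Z_{\alpha,\beta,s}(B) \to 0$, hence $\nu_{\alpha,\beta}(A) \to 0 = \nu_{\alpha_0,\beta_0}(E)$. Feeding this through the four-term sequence, together with $\nu_{\alpha,\beta}(H^{-1}(B)) \leq 0$ on the $\FF'_{\alpha,\beta}$-factor, exhibits in the limit a $\Coh^{\beta}$-subobject of $E$ with tilt slope equal to $\nu_{\alpha_0,\beta_0}(E)$, i.e. an actual tilt wall for $E$ at $(\alpha_0,\beta_0)$, contradicting the tilt-stability of $E$ throughout $V$. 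The hypothesis that no class-$v$ object is strictly tilt-semistable then upgrades the resulting bijection to the equality $M_{\alpha, \beta, s}(v) = M^{\tilt}_{\alpha, \beta}(v)$ with every object $\lambda_{\alpha,\beta,s}$-stable.

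Parts (1) and (3) are the two directions of the wall correspondence, built on this analysis. For (3), the non-emptiness hypothesis produces points of $P_v \cap P_{\ch(F)} \cap P_{\ch(G)}$ on $\{\lambda_{\alpha, \beta, s}(F) = \lambda_{\alpha, \beta, s}(G)\}$ arbitrarily close to the hyperbola; there $F,G \in \TT'_{\alpha,\beta}$, and the argument of (2) applied to the classes $\ch(F),\ch(G)$ shows $F,G$ are $\lambda_{\alpha,\beta,s}$-stable. Lemma \ref{lem:wall_crossing} then turns the extensions assembled from $0 \to F^n \to E \to G^m \to 0$ into $\lambda_{\alpha,\beta,s}$-semistable objects realizing an actual Bridgeland wall in $U \cap P_v$. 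For (1), I would run the dictionary in reverse: given a Bridgeland wall for the class $-v$ persisting throughout $U \cap P_v$, the shift $[-1]$ (accounting for the $\FF'_{\alpha,\beta}[1]$-summand of $\AA^{\alpha,\beta}$) places $E[-1],F[-1],G[-1]$ in $\Coh^{\beta_0}(\P^3)$ with class $v$; taking $\Coh^{\beta}$-cohomology of the persistent destabilizing sequence and letting $(\alpha,\beta) \to (\alpha_0,\beta_0)$ forces the limiting tilt slopes to coincide with $\nu_{\alpha_0,\beta_0}(v)$, yielding the claimed $\nu_{\alpha_0,\beta_0}$-semistability and actual tilt wall.

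The step I expect to be the main obstacle is the limiting argument at the hyperbola in part (2): because $\lambda_{\alpha,\beta,s}(v) \to \infty$ there, I must control the Bridgeland destabilizers uniformly as $(\alpha,\beta) \to (\alpha_0,\beta_0)$ and exclude a sequence of genuine walls accumulating at the boundary without descending to a tilt wall. This needs the support property together with the local finiteness and nesting of numerical walls from the Structure Theorem in order to bound the possible classes of $A$ and $B$, and it requires careful treatment of the degenerate case $\ch_1^{\beta}(A) \to 0$, where $\nu_{\alpha,\beta}(A) \to \infty$ rather than $0$ and the subobject slides toward the boundary of the heart $\AA^{\alpha,\beta}$.
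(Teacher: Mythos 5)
A point of order first: the paper does not prove this statement. It is imported wholesale from \cite[Theorem 6.1]{Sch15} and used as a black box, so there is no in-paper argument to compare your proposal against; the remarks below measure it against what a complete proof (such as the one in that reference) has to contain.

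Your outline identifies the correct mechanism --- the identity $\Im Z_{\alpha,\beta,s}(w)=\ch_1^{\beta}(w)\,\nu_{\alpha,\beta}(w)$, the inclusion of tilt-semistable objects with $\nu_{\alpha,\beta}>0$ into $\TT'_{\alpha,\beta}\subset\AA^{\alpha,\beta}$, and the four-term sequence converting an $\AA^{\alpha,\beta}$-subobject into a $\Coh^{\beta}(\P^3)$-subobject --- and this is indeed the skeleton of the actual argument. But there is a genuine gap exactly where you say you expect one, and flagging an obstacle is not the same as overcoming it. In part (2) you pass from ``$\Im Z_{\alpha,\beta,s}(A)\to 0$'' to ``$\nu_{\alpha,\beta}(A)\to 0$'' and then to ``a $\Coh^{\beta_0}$-subobject of $E$ of tilt slope $0$ in the limit.'' This requires (i) that $\ch_1^{\beta}(A)$ stay bounded away from $0$, which is precisely what can fail for subobjects involving the $\FF'_{\alpha,\beta}[1]$-part of the heart, and (ii) that the destabilizers occurring at a sequence of points approaching $(\alpha_0,\beta_0)$ range over a bounded set of classes, so that a limiting subobject exists at all. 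Step (ii) is where the support property and local finiteness of walls near the locus $\lambda_{\alpha,\beta,s}=\infty$ must actually be deployed; that is the substance of the proof, not a routine appeal, and your proposal defers it. Two further unflagged gaps: the asserted equality $M_{\alpha,\beta,s}(v)=M^{\tilt}_{\alpha,\beta}(v)$ needs both inclusions, and you only argue that tilt-stable objects become $\lambda_{\alpha,\beta,s}$-stable --- the converse (that a $\lambda_{\alpha,\beta,s}$-semistable object of class $v$ in $U\cap P_v$ has vanishing $H^{-1}$ and is tilt-semistable) is a separate argument of the same flavor as your part (1) and must be supplied. Likewise, in part (1) the claim that $E[-1]$, $F[-1]$, $G[-1]$ lie in $\Coh^{\beta_0}(\P^3)$ requires showing that an object of $\AA^{\alpha_0,\beta_0}$ of class $-v$ with $\Im Z_{\alpha_0,\beta_0,s}=0$ has trivial $H^0$; this uses the hypothesis $v^{\beta_0}_1>0$ and an analysis of the degenerate case $\ch_1^{\beta_0}=0$, neither of which appears in your sketch.
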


This Theorem will be used as follows in the the remainder of the article. Assume that we have determined all exact sequences that give walls in tilt stability for objects with a fixed Chern character $v$. By part (1) of the Theorem, we know that on one side of the hyperbola $\nu_{\alpha, \beta}(v) = 0$ the only walls in Bridgeland stability have to be defined by an exact sequence giving a wall in tilt stability. We will then use part (3) to show that every such sequence does indeed define a wall in Bridgeland stability. At this point we know all exact sequences defining walls on a path close to one side of the hyperbola $\nu_{\alpha, \beta}(v) = 0$. Finally, we have to use part (2) to show that all the moduli spaces of tilt stable objects actually occur in Bridgeland stability on this path.

By doing this, we can translate simple computations in tilt stability into the more complicated framework of Bridgeland stability. Sometimes there are exact sequences giving identical numerical walls in tilt stability, but different numerical walls in Bridgeland stability. Therefore, this translation allows us to observe additional chambers that are hidden in tilt stability.

\section{Tilt Stability for Elliptic Quartics}
Let $C$ be the complete intersection of two quadrics in $\P^3$, i.e. an elliptic quartic curve. We will compute all walls in tilt stability for $\beta < 0$ with respect to $v = \ch(\II_C)$. There is a locally free resolution $0 \to \OO(-4) \to \OO(-2)^{\oplus 2} \to \II_C \to 0$. This leads to
\[\ch^{\beta}(\II_C) = \left(1, -\beta, \frac{\beta^2}{2} - 4, -\frac{\beta^3}{6} + 4\beta + 8\right).\]
We denote the set of tilt semistable objects with respect to $(\alpha, \beta)$ and class $v$ by $M^{\tilt}_{\alpha, \beta}(v)$.

\begin{thm}
\label{thm:tiltStabilityEllipticQuartics}
There are three walls for $M^{\tilt}_{\alpha, \beta}(1,0,-4,8)$ for $\alpha > 0$ and $\beta < 0$. Moreover, the following table lists pairs of tilt semistable objects whose extensions completely describe all strictly semistable objects at each of the corresponding walls. Let $L$ be a line in $\P^3$, $V$ a plane in $\P^3$, $Z \subset \P^3$ a length two zero dimensional subscheme, $Z' \subset
V$ a length two zero dimensional subscheme and $P \in \P^3$, $Q \in V$ be points.
\begin{center}
  \begin{tabular}{ r | l }
    & \\
    $\alpha^2 + (\beta + 3)^2 = 1$ & $\OO(-2)^{\oplus
    2}$, $\OO(-4)[1]$ \\
    & \\
    \hline
    & \\
    $\alpha^2 + \left(\beta + \frac{7}{2} \right)^2 = \frac{17}{4}$ &
    $\II_L(-1)$, $\OO_V(-3)$ \\ 
    & \\
    \hline
    & \\
    \ & $\II_Z(-1)$, $\OO_V(-4)$ \\
    & \\
    $\alpha^2 + \left(\beta + \frac{9}{2}\right)^2 =
    \left(\frac{7}{2}\right)^2$ & $\II_P(-1)$, $\II_{Q/V}(-4)$ \\
    & \\
    \ & $\OO(-1)$, $\II_{Z'/V}(-4)$ \\
  \end{tabular}
\end{center}
The hyperbola $\nu_{\alpha, \beta}(1,0,-4) = 0$ is given by the equation
\[\beta^2 - \alpha^2 = 8.\]
Moreover, there are no semistable objects for $(\alpha, \beta)$ inside the smallest semicircle.
\end{thm}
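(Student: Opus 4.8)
The plan is to handle the four assertions separately, with the enumeration of walls as the core. The equation of the hyperbola is immediate: the Structure Theorem for Walls in Tilt Stability gives $\nu_{\alpha,\beta}(v)=0$ as $v_0\alpha^2 - v_0\beta^2 + 2v_1\beta - 2v_2 = 0$, and substituting $(v_0,v_1,v_2)=(1,0,-4)$ yields $\alpha^2 - \beta^2 + 8 = 0$, i.e. $\beta^2 - \alpha^2 = 8$. For the walls themselves I would first reduce to a finite problem. Any actual wall is induced by a short exact sequence $0 \to F \to E \to G \to 0$ in $\Coh^{\beta}(\P^3)$ with $\ch(E)=v$ and all three objects $\nu_{\alpha,\beta}$-semistable of equal slope along the wall, so that $\ch_{\leq 2}(F)+\ch_{\leq 2}(G)=(1,0,-4)$. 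By Theorem~\ref{thm:bg_inequality} both $Q^{\tilt}(F)\geq 0$ and $Q^{\tilt}(G)\geq 0$, and combining this with the standard Hodge-index-type inequality $Q^{\tilt}(F)+Q^{\tilt}(G)\leq Q^{\tilt}(E)=8$ valid for semistable objects sharing a wall, together with the membership constraints in $\Coh^{\beta}$ (the signs of $\ch_1^{\beta}$ for $F$ and $G$), confines $\ch_{\leq 2}(F)$ to finitely many values.

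Next I would run the enumeration. For each admissible class $(r,c,d)=\ch_{\leq 2}(F)$, the Structure Theorem produces the numerical wall $x\alpha^2 + x\beta^2 + y\beta + z = 0$ with $x=c$, $y=2(-4r-d)$, $z=8c$; I would discard the classes giving an empty solution set, a non-real radius, or a semicircle meeting only $\beta\geq 0$, and verify that exactly the three semicircles $\alpha^2+(\beta+3)^2=1$, $\alpha^2+(\beta+\tfrac72)^2=\tfrac{17}{4}$, and $\alpha^2+(\beta+\tfrac92)^2=(\tfrac72)^2$ survive. To promote these to actual walls and to read off the table, I would exhibit the listed objects explicitly and check tilt-semistability: the line bundles $\OO(-2)$, $\OO(-4)$ and their shifts are tilt-stable; the sheaves $\II_L(-1)$, $\II_Z(-1)$, $\II_P(-1)$, $\OO(-1)$ are slope-stable and hence tilt-stable for $\alpha\gg 0$ by Lemma~\ref{lem:stable_sheaves_stable_in_tilt}; and $\OO_V(-3)$, $\OO_V(-4)$, $\II_{Q/V}(-4)$, $\II_{Z'/V}(-4)$ are (semi)stable by the analogous statement on their planar supports. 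In each case one confirms $\ch_{\leq 2}(F)+\ch_{\leq 2}(G)=v$ and that a non-split extension has class $v$; since a numerical wall that is an actual wall at a single point is an actual wall everywhere along it (Structure Theorem, part (5)), this finishes the identification.

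For the emptiness inside the smallest semicircle $\alpha^2+(\beta+3)^2=1$, I would use the resolution $0 \to \OO(-4) \to \OO(-2)^{\oplus 2} \to \II_C \to 0$. For $\beta\in(-4,-2)$ one has $\OO(-2)^{\oplus 2}\in\TT_{\beta}$ and $\OO(-4)\in\FF_{\beta}$, so the resolution rotates to a short exact sequence $0 \to \OO(-2)^{\oplus 2} \to \II_C \to \OO(-4)[1] \to 0$ in $\Coh^{\beta}(\P^3)$ whose wall is precisely this smallest semicircle. Strictly inside it the subobject $\OO(-2)^{\oplus 2}$ acquires a larger tilt slope and destabilizes $\II_C$. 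Since the enumeration shows there is no wall strictly inside this semicircle, the set of tilt-semistable objects of class $v$ is constant on the interior chamber; I would then rule out the appearance of any other class-$v$ semistable object there, using nestedness of the walls and proximity to the hyperbola $\beta^2-\alpha^2=8$, to conclude $M^{\tilt}_{\alpha,\beta}(v)=\emptyset$ inside.

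The main obstacle is the completeness of the enumeration: one must be certain that no destabilizing class is overlooked. This requires carefully controlling the rank $\ch_0(H^{-1}(G))$ and the image in $\II_C$ when the subobject is not literally a subsheaf, and handling the membership signs in $\Coh^{\beta}$ so that the $Q^{\tilt}$-bound genuinely cuts the search down to a finite list. The emptiness assertion is a close second difficulty: destabilizing the distinguished object $\II_C$ is easy, but \emph{excluding every other} potential semistable object of class $v$ in the innermost chamber is the delicate point and is exactly where the nested-wall structure and the hyperbola must be brought to bear.
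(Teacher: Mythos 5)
Your overall architecture (classify numerical classes, then identify the destabilizing objects, then treat the innermost chamber) matches the paper's, but there are two genuine gaps in the execution you propose. First, the finiteness reduction. You flag it as the main obstacle, and indeed the $Q^{\tilt}$-additivity inequality together with the sign constraints in $\Coh^{\beta}$ does not by itself cut the search to a finite list: the constraint $0 \leq \ch_1^{\beta}(F) \leq \ch_1^{\beta}(E)$ only bounds $\ch_1(F)$ after a specific value of $\beta$ is fixed, while a priori your walls sweep over all of $\beta<0$. The paper's resolution is exactly the ingredient you are missing: the innermost wall $\alpha^2+(\beta+3)^2=1$ and the emptiness inside it are imported from \cite[Theorem 5.1]{Sch15}, and since numerical walls on this side are nested, every further actual wall must enclose that semicircle and hence cross the ray $\beta=-2$. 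One then classifies $\ch^{-2}_{\leq 2}$ of the destabilizing objects along that single ray (Lemma \ref{lem:chAtMinusTwo}), where $0\leq \ch_1^{-2}\leq 2$, the wall equation $\alpha^2=(4d+4)/(2r-1)>0$, and Bogomolov--Gieseker reduce everything to four classes. Without fixing such a vertical line your enumeration is not finite.

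Second, and more substantively, exhibiting the listed objects and checking their semistability only shows that the three semicircles are actual walls; it does not prove the completeness claim in the statement, namely that \emph{every} strictly semistable object at each wall is an extension of one of the listed pairs. That step requires showing that any tilt-semistable object with the relevant (full) Chern character is isomorphic to one of the listed sheaves, which is what Lemma \ref{lem:idealSheafRestriction} delivers in the paper's proof; it is also the only way to obtain the three distinct pairs on the outermost wall, which share the same $\ch_{\leq 2}$ and differ only in $\ch_3$ (the paper finds $e\in\{-11/6,-5/6,1/6\}$), so they are invisible to a $\ch_{\leq 2}$-level enumeration of numerical walls. Finally, for the emptiness inside the smallest semicircle you destabilize $\II_C$ but leave open the exclusion of all other objects of class $v$; the paper does not reprove this but cites \cite[Theorem 5.1]{Sch15}, and your sketch would still need that argument (e.g.\ a Jordan--H\"older analysis at the wall or a limit $\alpha\to 0$) to be supplied.
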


It is interesting to note that all relevant objects in this Theorem are sheaves and no actual 2-term complexes. The key difference to the classical picture, as we will see later, is that some sheaves of positive rank with torsion will turn out to be stable and replace ideal sheaves of heavily singular curves in some chambers.

The fact that the smallest wall is given by the equation $\alpha^2 + (\beta + 3)^2 = 1$ was already proved in \cite[Theorem 5.1]{Sch15} in more generality. Moreover, it was shown there that all semistable objects $E$ at the wall are given by extensions of the form $0 \to \OO(-2)^{\oplus 2} \to E \to \OO(4)[1] \to 0$ and that there are no tilt semistable objects inside this semicircle.

In order to prove the remainder of Theorem \ref{thm:tiltStabilityEllipticQuartics} we need to put numerical restrictions on potentially destabilizing objects. This can be done by the following two lemmas.

\begin{lem}[{\cite[Lemma 5.4]{Sch14}}]
\label{lem:idealSheafRestriction}
Let $E \in \Coh^{\beta}(\P^3)$ be tilt semistable with respect to some $\beta \in \Z$ and $\alpha \in \R_{> 0}$.
\begin{enumerate}
  \item If $\ch^{\beta}(E) = (1,1,d,e)$ then $d - 1/2 \in \Z_{\leq 0}$. In the case $d = -1/2$, we get $E \cong \II_L(\beta + 1)$ where $L$ is a line plus $1/6-e$ (possibly embedded) points in $\P^3$. If $d = 1/2$, then $E \cong \II_Z(\beta + 1)$ for a zero dimensional subscheme $Z \subset \P^3$ of length $1/6 - e$.
  \item If $\ch^{\beta}(E) = (0,1,d,e)$, then $d - 1/2 \in \Z$ and $E \cong I_{Z/V}(\beta + d + 1/2)$ where $Z$ is a dimension zero subscheme of length $1/24 + d^2/2 - e$.
\end{enumerate}
\end{lem}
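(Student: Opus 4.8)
The plan is to treat both parts uniformly: each classifies tilt-semistable objects of the smallest admissible positive rank ($\ch_0^{\beta}(E)=1$ in (1)) or of rank zero with primitive slope numerator ($\ch_0^{\beta}(E)=0$, $\ch_1^{\beta}(E)=1$ in (2)), and in every case the strategy is to first identify $E$ with an explicit twisted ideal sheaf and only then read off the discrete data from a Chern-character computation. I would begin by extracting the a priori numerical constraint from the Bogomolov--Gieseker inequality (Theorem \ref{thm:bg_inequality}): in case (1) one gets $Q^{\tilt}(E) = (\ch_1^{\beta})^2 - 2\ch_0^{\beta}\ch_2^{\beta} = 1 - 2d \ge 0$, hence $d \le 1/2$; in case (2) it reads $1 \ge 0$ and imposes nothing, consistent with the absence of an upper bound on $d$ there. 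I would deliberately postpone the integrality $d - 1/2 \in \Z$, since it falls out for free once $E$ is identified as an ideal sheaf.

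The heart of the argument is the identification of $E$ as a twisted ideal sheaf. The class $(\ch_0,\ch_1)$ is primitive and $\beta < \mu(E)$, so by Lemma \ref{lem:stable_sheaves_stable_in_tilt} the tilt-stable objects of this class for $\alpha \gg 0$ are exactly the slope-stable sheaves. A rank-$1$ torsion-free sheaf with $\ch_1 = (\beta+1)H$ is automatically slope-stable, and its reflexive hull is the line bundle $\OO(\beta+1)$ with cokernel supported in codimension $\ge 2$; thus it is of the form $\II_W(\beta+1)$ with $\dim W \le 1$. In case (2) the analogous statement on the plane $V = \supp(E)$, viewed as a rank-$1$ torsion-free sheaf on $\P^2$, yields $E \cong \II_{Z/V}(\beta + d + \tfrac12)$ with $Z$ zero-dimensional. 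To transport this description from $\alpha \gg 0$ down to the prescribed $(\alpha,\beta)$, I would use the Structure Theorem for walls together with the Bogomolov--Gieseker inequality: the numerical walls are nested semicircles, and applying $Q^{\tilt} \ge 0$ to a hypothetical destabilizing subobject and quotient shows that for the low-degree classes underlying the structural claims there is no room for an actual wall in the region $\beta < \mu(E)$. Hence the set of semistable objects of this class is constant and coincides with the slope-stable sheaves found asymptotically.

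Finally I would compute. Writing $E \cong \II_W(\beta+1)$, with the one-dimensional part of $W$ of degree $\delta \ge 0$ and points of total length $\ell$, the identity $\ch^{\beta}(E) = \ch(\II_W)\cdot e^{H}$ gives $\ch_2^{\beta} = \tfrac12 - \delta$ and $\ch_3^{\beta} = \tfrac16 - \ell$. This simultaneously produces the integrality and the bound ($d = \tfrac12 - \delta \in \tfrac12 + \Z_{\le 0}$), matches $d = \tfrac12$ with $\delta = 0$ (so $W = Z$ is zero-dimensional of length $\tfrac16 - e$) and $d = -\tfrac12$ with $\delta = 1$ (so $W$ is a line together with $\tfrac16 - e$ points), and in case (2) the analogous bookkeeping on $V$ gives the length $\tfrac{1}{24} + \tfrac{d^2}{2} - e$.

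The step I expect to be the main obstacle is the middle one: excluding that $E$ is a genuine two-term complex, or a non-torsion-free sheaf, at the given $(\alpha,\beta)$. One cannot simply push $\alpha \to \infty$ for a \emph{fixed} object, since it may cross walls, and tilt stability lacks unique Jordan--H\"older factors (as the text emphasizes); so the argument must rule out the relevant walls for these minimal classes directly, which is precisely where one needs the full force of the Bogomolov--Gieseker inequality applied to the putative sub- and quotient objects.
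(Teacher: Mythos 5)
The paper does not prove this lemma; it is quoted verbatim from \cite[Lemma 5.4]{Sch14}, so your proposal is being measured against that external argument. Your overall architecture --- identify $E$ with a (twisted) ideal sheaf by relating tilt stability at the given $(\alpha,\beta)$ to slope stability in the large-volume limit, then read off the discrete data by Chern character bookkeeping --- is the standard route and is essentially the one taken in \cite{Sch14}; your final computations (including $\ell = 1/24 + d^2/2 - e$ in case (2)) are correct.

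The gap is in the transport step, exactly the one you flag as the main obstacle, and it is a gap of mechanism rather than of strategy. You propose to rule out walls by applying the Bogomolov--Gieseker inequality to a putative subobject and quotient, and you assert that there is ``no room for an actual wall in the region $\beta < \mu(E)$.'' That assertion is false: the ideal sheaf of a line has $\ch^{-1}(\II_L) = (1,1,-\tfrac12,\ldots)$, a class squarely covered by part (1), and yet $\OO(-1) \into \II_L$ (a plane through $L$) with quotient $\OO_V(-1)$ defines an actual wall $\alpha^2 + (\beta+\tfrac32)^2 = \tfrac14$ in the region $\beta < \mu(\II_L) = 0$. What is true --- and all the proof needs --- is that no actual wall meets the vertical line $\beta = \beta_0$, and the reason is not $Q^{\tilt} \geq 0$ but the integrality of $\ch_1^{\beta_0}$: since $\beta_0 \in \Z$, any destabilizing sequence $0 \to F \to E \to G \to 0$ in $\Coh^{\beta_0}(\P^3)$ with all slopes finite and equal would force $\ch_1^{\beta_0}(F)$ and $\ch_1^{\beta_0}(G)$ to be strictly positive integers summing to $\ch_1^{\beta_0}(E) = 1$, which is impossible. (In the example above the wall spans $\beta \in (-2,-1)$ and only touches $\beta = -1$ at $\alpha = 0$.) Hence the set of semistable objects is constant along the ray $\{\beta_0\} \times \R_{>0}$, and one may pass to $\alpha \gg 0$ and invoke Lemma \ref{lem:stable_sheaves_stable_in_tilt}. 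The tell is that your write-up never uses the hypothesis $\beta \in \Z$ at this point, yet the statement fails without it; no amount of Bogomolov--Gieseker applied to sub- and quotient objects can substitute for the integrality argument. Once this step is repaired, the rest of your argument (torsion-freeness, reflexive hull, purity on the plane in case (2), and the length computations) goes through.
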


The next lemma determines the Chern characters of possibly destabilizing objects for $\beta = -2$.
\begin{lem}
\label{lem:chAtMinusTwo}
If an exact sequence $0 \to F \to E \to G \to 0$ in $\Coh^{-2}(\P^3)$ defines a wall for $\beta = -2$ with $\ch_{\leq 2}(E) = (1, 0, -4)$ then
\[\ch^{-2}_{\leq 2}(F), \ch^{-2}_{\leq 2}(G) \in \left\{\left(1, 1, -\frac{1}{2}\right), \left(0, 1, -\frac{3}{2}\right), \left(1, 1, \frac{1}{2}\right), \left(0,1, -\frac{5}{2}\right) \right\}.\]
\begin{proof}
The four possible Chern characters group into two cases that add up to $\ch^{-2}_{\leq 2}(E) = (1, 2, -2)$.

Let $\ch^{-2}_{\leq 2}(F) = (r,c,d)$. By definition of $\Coh^{-2}(\P^3)$, we have $0 \leq c \leq 2$. If $c=0$, then $\nu_{\alpha, -2}(F) = \infty$ and this is in fact no wall for any $\alpha > 0$. If $c=2$, then the same argument for the quotient $G$ shows there is no wall. Therefore, $c=1$ must hold. We can compute
\begin{align*}
\nu_{\alpha, -2}(E) = -1 - \frac{\alpha^2}{4}, \ \nu_{\alpha, -2}(F) = d - \frac{r \alpha^2}{2}.
\end{align*}
The wall is defined by $\nu_{\alpha, -2}(E) = \nu_{\alpha, -2}(F)$. This leads to
\begin{align}
\label{eq:alphaPositive}
\alpha^2 = \frac{4d+4}{2r-1} > 0.
\end{align}
The next step is to rule out the cases $r \geq 2$ and $r \leq -1$. If $r \geq 2$, then $\ch_0(G) \leq -1$. By exchanging the roles of $F$ and $G$ in the following argument, it is enough to deal with the situation $r \leq -1$. In that case we use (\ref{eq:alphaPositive}) and the Bogomolov Gieseker inequality to get the contradiction $2rd \leq 1$, $d < -1$ and $r \leq -1$.

Therefore, we know $r=0$ or $r=1$. By again interchanging the roles of $F$ and $G$ if necessary we only have to handle the case $r=1$. Equation (\ref{eq:alphaPositive}) implies $d > - 1$. By Lemma
\ref{lem:idealSheafRestriction} we get $d - 1/2 \in \Z_{\leq 0}$. Therefore, we are left with the cases claimed.
\end{proof}
\end{lem}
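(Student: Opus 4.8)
The plan is to exploit additivity of the Chern character together with the constraints coming from the category $\Coh^{-2}(\P^3)$ and the two inequalities already at hand, namely the Bogomolov--Gieseker inequality (Theorem \ref{thm:bg_inequality}) and the classification in Lemma \ref{lem:idealSheafRestriction}. First I would compute $\ch^{-2}_{\leq 2}(E) = (1,2,-2)$ directly from the twisting formulas, set $\ch^{-2}_{\leq 2}(F) = (r,c,d)$, and observe that additivity forces $\ch^{-2}_{\leq 2}(G) = (1-r, 2-c, -2-d)$. Since every object of $\Coh^{-2}(\P^3)$ has nonnegative twisted first Chern character, both $c \geq 0$ and $2-c \geq 0$, so $c \in \{0,1,2\}$. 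The values $c=0$ and $c=2$ make $\nu_{\alpha,-2}(F)$ or $\nu_{\alpha,-2}(G)$ infinite and hence cannot produce a finite wall; this pins down $c=1$.

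With $c=1$ fixed I would write down the wall equation. Computing $\nu_{\alpha,-2}(E) = -1 - \alpha^2/4$ and $\nu_{\alpha,-2}(F) = d - r\alpha^2/2$ and equating them yields
\[
\alpha^2 = \frac{4(d+1)}{2r-1},
\]
and a genuine wall requires the right-hand side to be strictly positive. The heart of the argument is then to bound $r$. Because $\ch^{-2}(F) + \ch^{-2}(G) = (1,2,-2)$, the ranks satisfy $r + (1-r) = 1$, so if $r \notin \{0,1\}$ then one of $F,G$ has rank $\leq -1$; after exchanging their roles I may assume $r \leq -1$. Positivity of $\alpha^2$ with $2r-1 < 0$ then forces $d < -1$, and combining $r \leq -1$ with $d < -1$ gives $2rd > 2$, contradicting the bound $2rd \leq 1$ that follows from $Q^{\tilt}(F) \geq 0$. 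Hence $r \in \{0,1\}$.

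Finally, interchanging $F$ and $G$ once more lets me assume $r=1$. Here positivity of $\alpha^2$ gives $d > -1$, while Lemma \ref{lem:idealSheafRestriction}(1) applied to $\ch^{-2}(F) = (1,1,d,\ast)$ gives $d - 1/2 \in \Z_{\leq 0}$; the only values compatible with $d > -1$ are $d \in \{-1/2, 1/2\}$. Reading off the complementary class $G$ by additivity produces the two rank-zero characters $(0,1,-3/2)$ and $(0,1,-5/2)$, which together with the two rank-one characters exhausts the claimed list. I expect the rank bound to be the only genuine obstacle: it is the one place where the wall geometry and the Bogomolov--Gieseker inequality must be played against each other, and the reduction to $r \leq -1$ via the symmetry between $F$ and $G$ is precisely what keeps the case analysis finite.
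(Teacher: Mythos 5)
Your proposal is correct and follows essentially the same route as the paper: reduce to $c=1$ via the torsion-pair constraint, derive the wall equation $\alpha^2 = \tfrac{4(d+1)}{2r-1} > 0$, eliminate $|r|\geq 2$ and $r\leq -1$ by playing positivity of $\alpha^2$ against the Bogomolov--Gieseker bound $2rd \leq 1$, and finish with Lemma \ref{lem:idealSheafRestriction} in the case $r=1$. No substantive differences to report.
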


\begin{proof}[Proof of Theorem \ref{thm:tiltStabilityEllipticQuartics}]
By assumption we are only dealing with walls that intersect the branch of the hyperbola with $\beta < 0$. As explained before, we already know the smallest wall. This semicircle intersects the $\beta$-axis at $\beta = -4$ and $\beta = -2$. Therefore, all other walls intersecting this branch of the hyperbola also have to intersect the ray $\beta = -2$. By Lemma \ref{lem:chAtMinusTwo} there are at most two walls intersecting the line $\beta = -2$. They correspond to the two solutions claimed to exist.

Let $0 \to F \to E \to G \to 0$ define a wall in $\Coh^{-2}(\P^3)$ with $\ch(E) = (1,0,-4,8)$. One can compute $\ch^{-2}(E) = (1, 2, -2, \tfrac{4}{3})$. A direct computation shows that the middle wall is given by $\ch^{-2}(F) = (1, 1, -1/2, e)$ and $\ch^{-2}(G) = (0, 1, -3/2, 4/3 - e)$. By Lemma \ref{lem:idealSheafRestriction} we get $F \cong \II_L(-1)$ where $L$ is a line plus $1/6-e$ (possibly embedded) points in $\P^3$. In particular, the inequality $e \leq 1/6$ holds. The same lemma also implies that $G \cong I_{Z/V}(-3)$ where $Z$ is a dimension zero subscheme of length $e-1/6$. Overall this shows $e=1/6$. Therefore, $L$ is a just a line and $E \cong \OO_V(-3)$.

The outermost wall is given by $\ch^{-2}(F) = (1, 1, 1/2, e)$ and $\ch^{-2}(G) = (0, 1, -5/2, 4/3 - e)$. We use again Lemma \ref{lem:idealSheafRestriction} to get $F \cong \II_Z(-1)$ for a zero
dimensional subscheme $Z \subset \P^3$ of length $1/6 - e$. Therefore, we have $e-1/6 \in \Z_{\geq 0}$. The lemma also shows $G \cong I_{Z/V}(-4)$ where $Z$ is a dimension zero subscheme of length $e + 11/6$. Overall, we get $e \in \{-11/6, -5/6, 1/6\}$. That corresponds exactly to the three cases in the Theorem.
\end{proof}

\section{Curves on the intersection of the two components}

Let $\Hilb^{4t}_1 \subset \Hilb^{4t}(\P^3)$ be the closure of the locus of smooth elliptic quartic curves. By $\Hilb^{4t}_2 \subset \Hilb^{4t}(\P^3)$ we denote the closure of the locus of plane quartics curves plus two disjoint points. A straightforward dimension count shows $\dim \Hilb^{4t}_1 = 16$ and $\dim \Hilb^{4t}_2 = 23$. In this section, we will prove some preliminary results about the intersection of the two components. We will do this following the approach of Piene and Schlessinger in \cite{PS85}, which requires a careful analysis of the equations of the curves along this intersection.

\begin{prop}
\label{prop:ideal_sequence_three}
Let $I_C$ be the ideal of a subscheme $C \subset \P^3$ of dimension $1$, which fits into an exact sequence of the form $0 \to \II_{Z'}(-1) \to I_C \to \OO_V(-4) \to 0$, where $V$ is a plane in $\P^3$ and $Z' \subset V$ is a zero dimensional subscheme of length two.
\begin{enumerate}
\item The ideal $I_C$ is projectively equivalent to one of the ideals
\begin{align*}
(x^2, xy, xzw, f_4(x,y,z,w)), \\
(x^2, xy, xz^2, g_4(x,y,z,w)),
\end{align*}
where $f_4 \in (x, y, zw)$, respectively $g_4 \in (x, y, z^2)$ is of degree $4$.
\item The ideal 
\[ (x^2, xy, xz^2, y^4) \]
lies in the closure of the orbit of $\II_C$ under the action of $\PGL(4)$ for any $\II_C$ as above.
\end{enumerate}
\begin{proof}
Up to the action of $\PGL(4)$ we can assume that either $I_{Z'} = (x, y, zw)$ or $I_{Z'} = (x, y, z^2)$ and $I_V = (x)$. The exact sequence $0 \to \II_{Z'}(-1) \to I_C \to \OO_V(-4) \to 0$ implies that either $l(x, y, z, w) \cdot (x, y, zw) \subset I_C$ or $l(x, y, z, w) \cdot (x, y, z^2) \subset I_C$ for a linear polynomial $l(x, y, z, w) \in \C[x,y,z,w]$. Since the quotient is supported on $V$, we must have $l = x$. Therefore, either $(x^2, xy, xzw) \subset I_C$ or $(x^2, xy, xz^2) \subset I_C$. Since the quotient is $\OO_V(-4)$, there has to be another degree $4$ generator $f_4(x,y,z,w)$ with $x f_4(x,y,z,w) \in (x^2, xy, xzw)$, respectively $g_4(x,y,z,w)$ with $x g_4(x,y,z,w) \in (x^2, xy, xz^2)$. That proves (1).

By (1), we can assume that either $I_C = (x^2, xy, xzw, f_4(x,y,z,w))$ for $f_4 \in (x, y, zw)$ or $I_C = (x^2, xy, xz^2, g_4(x,y,z,w))$ for $g_4 \in (x, y, z^2)$. We can take the limit $t \to 0$ for the action of the element $g_t \in \PGL(4)$ that fixes $x$, $y$, $z$ and maps $w \mapsto (1-t)z + tw$. Thus, we can assume that $I_C = (x^2, xy, xz^2, g_4(y,z))$ where $g_4 \in \C[y,z]$. Pick $\lambda \in \C \backslash \{ 0 \}$ such that $g_4(\lambda, 1) \neq 0$. We analyze the action of $g_t \in \PGL(4)$ that fixes $x$, $w$, maps $y \mapsto \lambda y$ and maps $z \mapsto (1-t)y + tz$. We get
\begin{align*}
g_t \cdot (x^2, xy, xz^2, g_4(y,z)) &= (x^2, \lambda xy, (1-t)^2 xy^2 +2(1-t)txyz + t^2 xz^2, g_4(\lambda y, (1-t)y + tz)) \\
&= (x^2, xy, xz^2, g_4(\lambda y, (1-t)y + tz)).
\end{align*}
Since $g_4(\lambda, 1) \neq 0$, we have $g_4(\lambda y, y) \neq 0$ and we can finish the proof of (2) by taking the limit $t \to 0$.
\end{proof}
\end{prop}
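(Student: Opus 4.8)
The plan is to reduce the pair $(V,Z')$ to a normal form using the $\PGL(4)$-action, read off the generators of $I_C$ from the inclusion $\II_{Z'}(-1)\into I_C$, and then prove (2) by two explicit one-parameter degenerations carrying any such $I_C$ to $(x^2,xy,xz^2,y^4)$. For (1), I would first classify $Z'$: since $\PGL(4)$ acts transitively on planes I may take $V=\{x=0\}$, so $I_V=(x)$, and a length-two subscheme of $V$ is either two distinct points or a length-two scheme supported at a point, normalizing to $I_{Z'}=(x,y,zw)$ and $I_{Z'}=(x,y,z^2)$ respectively. The next step is to identify the map $\phi\colon \II_{Z'}(-1)\to I_C\subset\OO_{\P^3}$. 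Because $Z'$ has codimension $\geq 2$, reflexivity of $\OO_{\P^3}$ gives $\lHom(\II_{Z'},\OO)=\OO$, so $\Hom(\II_{Z'}(-1),\OO)=\Hom(\II_{Z'},\OO(1))=H^0(\OO(1))$; hence $\phi$ is multiplication by a linear form $l$. As $\OO_V(-4)$ is supported exactly on $V$, the map $\phi$ is an isomorphism off $V$, which forces $\{l=0\}=V$ and thus $l=x$ up to scalar. Therefore $x\cdot I_{Z'}$, that is $(x^2,xy,xzw)$ or $(x^2,xy,xz^2)$, lies in $I_C$; and since $\OO_V(-4)$ is cyclic with one generator in degree $4$, lifting that generator yields a quartic $f_4$ (resp.\ $g_4$) generating the rest. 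The membership $f_4\in(x,y,zw)$ is equivalent to $x f_4\in x\cdot I_{Z'}$, which holds because $x\in I_V$ annihilates the quotient $\OO_V(-4)$. This proves (1).

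For (2), the idea is a chain of degenerations inside the orbit closure. I would first apply the one-parameter subgroup fixing $x,y,z$ with $w\mapsto(1-t)z+tw$. As $t\to 0$ this sends $w\mapsto z$, so $xzw\mapsto xz^2$ and both normal forms acquire $(x^2,xy,xz^2)$, while the quartic becomes a polynomial in $x,y,z$ only. A short reduction modulo $(x^2,xy,xz^2)$ then lets me replace this quartic by one in $\C[y,z]\cap(y,z^2)$, so the limit ideal has the shape $(x^2,xy,xz^2,g_4(y,z))$ with $g_4$ nonzero and homogeneous of degree $4$. Next I would choose $\lambda\neq 0$ with $g_4(\lambda,1)\neq 0$ --- possible since a nonzero binary quartic dehomogenizes to a nonzero polynomial with finitely many roots --- and apply the subgroup fixing $x,w$ with $y\mapsto\lambda y$ and $z\mapsto(1-t)y+tz$. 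The terms $xy^2$ and $xyz$ produced from $xz^2$ lie in $(xy)$ and drop out, so $(x^2,xy,xz^2)$ is preserved, whereas $g_4\mapsto g_4(\lambda y,(1-t)y+tz)\to g_4(\lambda,1)\,y^4$ by homogeneity. Rescaling gives the limit $(x^2,xy,xz^2,y^4)$, as desired.

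The main obstacle I anticipate is not any isolated calculation but the bookkeeping for the two linear actions: checking that each degeneration preserves the monomial part $(x^2,xy,xz^2)$ after discarding the lower-order terms it creates, and that the quartic generator can always be normalized into $\C[y,z]$ before the second step. A subtler, more conceptual point I would want to confirm is that these orbit-closure limits genuinely compute the flat limits of the family of subschemes --- that passing to limits of generators does not alter the Hilbert polynomial --- though the essentially monomial structure of every ideal involved should make this routine.
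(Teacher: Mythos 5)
Your proposal follows the paper's proof essentially verbatim: the same normalization of $(V,Z')$ and identification of the inclusion $\II_{Z'}(-1)\into I_C$ as multiplication by $x$, and for part (2) the same two one-parameter degenerations ($w\mapsto(1-t)z+tw$, then $y\mapsto\lambda y$, $z\mapsto(1-t)y+tz$ with $g_4(\lambda,1)\neq 0$). The extra details you supply --- the $\lHom$ computation showing the map is a linear form, the explicit reduction of the quartic into $\C[y,z]$, and the remark that one should check the generator-wise limits agree with the flat limits --- are correct elaborations of steps the paper leaves implicit.
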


Next we want to analyze the singularities of the point on the Hilbert scheme corresponding to $(x^2, xy, xz^2, y^4)$. We will use \cite{M2} and the techniques developed in \cite{PS85}.

\begin{prop}
\label{prop:piene-schlessinger}
If $I_C = (x^2, xy, xz^2, y^4)$, then $I_C$ lies on the intersection of two irreducible components of $\Hilb(\P^3)$ and is a smooth point on each of them. Moreover, the intersection is locally of dimension $15$ and transversal.
\begin{proof}
Let $p_C \in \Hilb(\P^3)$ be the point parametrizing $C$. Next, we use the comparison theorem \cite[p. 764]{PS85} which claims the Hilbert scheme $\Hilb(\P^3)$ and the universal deformation space which parametrizes all homogeneous ideals with Hilbert function equal to that of $I_C$
are isomorphic in an \'etale neighborhood of the point $p_C$ if 
\[
\left( \frac{\mathbb{C}[x,y,z,w]}{I_C} \right)_d \cong H^0(C, \mathcal{O}_C(d))
\]  
for $d=\deg(f_i)$ where $f_i$ are generators of $I_C$. For our particular ideal, this equality can for example directly be checked with help of Macaulay2 or by hand. The comparison theorem allows us to find local equations of the Hilbert scheme near $p_C$ by using the same strategy than the proof of \cite[Lemma 6]{PS85}. In fact, this procedure has been implemented in the Macaulay2 Package ``VersalDeformations" (see \cite{Ilt12}). In particular, the routine localHilbertScheme generates an ideal of the form (see Appendix A)
\[
\left( -t_5t_{24}, -t_6t_{24} ,-t_7t_{24}, -t_8t_{24},t_{15}t_{24} ,t_{16}t_{24},t_{17}t_{24}-2t_{22}t_{24},t_{18}t_{24}-2t_{23}t_{24} \right) \in \mathbb{C}[t_1, \ldots, t_{24}.]
\]
Then, \'etale locally at $p_C$, the Hilbert scheme is the transversal intersection of the hyperplane $(t_{24}=0)$ and a $16$-dimensional linear subspace.
\end{proof}
\end{prop}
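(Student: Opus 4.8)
The plan is to reduce the global statement about $\Hilb(\P^3)$ to a purely local computation of the Hilbert scheme at the point $p_C$ parametrizing $C = V(I_C)$, and then to read off the geometry from an explicit system of local equations. The enabling tool is the comparison theorem of Piene and Schlessinger \cite{PS85}, which identifies $\Hilb(\P^3)$, in an \'etale neighborhood of $p_C$, with the deformation space of \emph{homogeneous} ideals sharing the Hilbert function of $I_C$, provided the cohomological condition
\[
\left( \frac{\C[x,y,z,w]}{I_C} \right)_d \cong H^0(C, \OO_C(d))
\]
holds for every degree $d$ occurring among the generators of $I_C$. So the first step is to verify this condition for $I_C = (x^2, xy, xz^2, y^4)$; since the generators sit in degrees $2$, $2$, $3$, $4$, this is a finite check on graded pieces, carried out by hand or with \cite{M2}.

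Once the comparison theorem applies, the second step is to produce the local equations explicitly. Following the strategy of \cite[Lemma 6]{PS85}, one lifts the first-order deformations of the homogeneous ideal and computes the obstruction map order by order; this is exactly what Ilten's package \cite{Ilt12} automates via the routine \texttt{localHilbertScheme}. The output is an ideal in $\C[t_1, \dots, t_{24}]$ whose vanishing cuts out the Hilbert scheme \'etale-locally at $p_C$, and the core of the argument is that every one of its generators is divisible by the single variable $t_{24}$, so that the ideal has the shape $\{ t_{24} \cdot \ell_i \}$ for eight linear forms $\ell_1, \dots, \ell_8$, namely $t_5, t_6, t_7, t_8, t_{15}, t_{16}, t_{17}-2t_{22}, t_{18}-2t_{23}$.

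The final step is to interpret this factorization geometrically. Set-theoretically the zero locus is $\{t_{24}=0\} \cup \{\ell_1 = \dots = \ell_8 = 0\}$, a union of two linear subspaces of $\A^{24}$ of dimensions $23$ and $16$; these are the local models of the two components $\Hilb_2^{4t}$ and $\Hilb_1^{4t}$ through $p_C$, and each, being linear, is smooth at $p_C$. Their intersection $\{t_{24} = 0,\ \ell_1 = \dots = \ell_8 = 0\}$ is cut out by the nine linear forms $t_{24}, \ell_1, \dots, \ell_8$, which are visibly independent because $t_{24}$ does not appear among the $\ell_i$, giving $\dim = 24 - 9 = 15$; since $15 = 23 + 16 - 24$, the two sheets meet transversally. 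I expect the main obstacle to be the bookkeeping that justifies passing through the comparison theorem, in particular checking the graded isomorphism displayed above and ensuring that the computer-generated obstruction ideal is complete (that no higher-order terms are silently dropped), rather than the elementary linear algebra of the last step.
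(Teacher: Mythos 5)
Your proposal is correct and follows the same route as the paper: verify the hypothesis of the Piene--Schlessinger comparison theorem for the generator degrees of $I_C$, invoke the \texttt{localHilbertScheme} routine of \cite{Ilt12} to obtain the obstruction ideal $(t_{24}\ell_1,\dots,t_{24}\ell_8)$, and read off the two linear sheets of dimensions $23$ and $16$ meeting transversally in dimension $15$. Your final paragraph merely makes explicit the elementary linear algebra that the paper leaves implicit, so there is nothing to add.
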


It is not hard to see that the two components $(x^2, xy, xz^2, y^4)$ is lying on are $\Hilb^{4t}_1$ and $\Hilb^{4t}_2$ by giving explicit degenerations. However, it is also a direct consequence of the results in the next section.

\section{Bridgeland stability}
\label{sec:bridgeland}

The goal of this section is to translate the computations in tilt stability to actual wall crossings in Bridgeland stability. We will analyze the singular loci of the occurring moduli spaces and use this to reprove the global description of the main component of the Hilbert scheme as in \cite{VA92}.

As a consequence of Theorem \ref{thm:wall_intersecting_hyperbola} and Theorem \ref{thm:tiltStabilityEllipticQuartics}, we obtain the following corollary. In this application of Theorem \ref{thm:wall_intersecting_hyperbola} all exact sequences giving walls in tilt stability to the left hand side of the unique vertical wall are of the form in (3). Therefore, we do not have more sequences giving walls in tilt stability than in Bridgeland stability to the left hand side of the left branch of the hyperbola.

\begin{cor}
\label{cor:allwalls}
There is a path $\gamma: [0,1] \to \R_{>0} \times \R \subset \Stab(\P^3)$ that crosses the following walls for $v = (1, 0, -4, 8)$ in the given order. The walls are defined by the two given objects having the same slope. Moreover, all strictly semistable objects at each of the walls are extensions of those two objects. Let $L$ be a line in $\P^3$, $V$ a plane in $\P^3$, $Z \subset \P^3$ a length two zero dimensional subscheme, $Z' \subset V$ a length two zero dimensional subscheme and $P \in \P^3$, $Q \in V$ be points.
\begin{enumerate}
  \item $\OO(-2)^{\oplus 2}$, $\OO(-4)[1]$
  \item $\II_L(-1)$, $\OO_V(-3)$
  \item $\II_Z(-1)$, $\OO_V(-4)$
  \item $\II_P(-1)$, $\II_{Q/V}(-4)$
  \item $\OO(-1)$, $\II_{Z'/V}(-4)$
\end{enumerate}
\end{cor}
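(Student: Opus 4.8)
The plan is to derive Corollary \ref{cor:allwalls} as a direct application of the translation machinery in Theorem \ref{thm:wall_intersecting_hyperbola} to the tilt-stability computation already completed in Theorem \ref{thm:tiltStabilityEllipticQuartics}. The strategy is precisely the three-step procedure described in the paragraph following Theorem \ref{thm:wall_intersecting_hyperbola}. First I would fix the branch $\beta < 0$ of the hyperbola $\nu_{\alpha,\beta}(1,0,-4) = 0$, which by Theorem \ref{thm:tiltStabilityEllipticQuartics} is $\beta^2 - \alpha^2 = 8$, and note that each of the five destabilizing pairs listed in that theorem gives a short exact sequence $0 \to F \to E \to G \to 0$ in $\Coh^{\beta}(\P^3)$ with $F,G$ tilt stable and $\nu_{\alpha_0,\beta_0}(F) = \nu_{\alpha_0,\beta_0}(G)$ at the relevant wall. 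The smallest wall is already known to transfer from \cite{Sch15}, so the real content is the four remaining sequences.

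The second step is to verify the numerical hypotheses required by parts (3) and (1) of Theorem \ref{thm:wall_intersecting_hyperbola}. For part (3), for each pair $(F,G)$ I would check that the set $P_v \cap P_{\ch(F)} \cap P_{\ch(G)} \cap \{\lambda_{\alpha,\beta,s}(F) = \lambda_{\alpha,\beta,s}(G)\}$ is non-empty; since the tilt walls are known explicitly as the semicircles listed in Theorem \ref{thm:tiltStabilityEllipticQuartics}, this amounts to confirming that the $\lambda$-equality locus meets the region $P_v$ where $\nu_{\alpha,\beta}(v) > 0$, i.e. to the correct side of the hyperbola. This is a finite, routine computation using the explicit formulas for $\nu$ and $\lambda$ and the twisted Chern characters $\ch^{\beta}$. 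Once non-emptiness holds, part (3) yields that each tilt wall produces an actual Bridgeland wall defined by the same sequence in a neighborhood inside $P_v$. Conversely, part (1) guarantees that no other Bridgeland walls can appear on that side of the hyperbola, since any such wall would have to come from a tilt wall, and all tilt walls to the left of the vertical wall $\beta = 0$ are of the shape treated in part (3)—this is exactly the point made in the paragraph preceding the corollary.

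The third step is to invoke part (2) of Theorem \ref{thm:wall_intersecting_hyperbola} to ensure that the tilt moduli spaces between consecutive walls genuinely appear as Bridgeland moduli spaces along the path, which requires that all $\nu_{\alpha_0,\beta_0}$-semistable objects of class $v$ in each chamber are in fact stable. I would check this stability by a slope-primitivity argument together with the explicit description of the semistable objects from Theorem \ref{thm:tiltStabilityEllipticQuartics}, in the same spirit as Lemma \ref{lem:stable_sheaves_stable_in_tilt}. Finally, I would assemble a single path $\gamma$ crossing the five walls in the stated order by tracking the radii of the semicircles, the largest semicircle being crossed first and the smallest last, which gives the ordering $(1)$–$(5)$.

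The main obstacle I anticipate is the bookkeeping in the second step: one must confirm, for each of the four sequences, that the Bridgeland $\lambda$-equality locus actually intersects $P_v$ and thereby produces a \emph{genuine} wall, and in particular that sequences giving the \emph{same} numerical tilt wall (the three sequences sharing the outermost semicircle $\alpha^2 + (\beta + \tfrac{9}{2})^2 = (\tfrac{7}{2})^2$ in Theorem \ref{thm:tiltStabilityEllipticQuartics}) may split into distinct Bridgeland walls. Disentangling these coincident tilt walls into the separate Bridgeland chambers $(3)$, $(4)$, $(5)$ is the subtle point, and it is precisely the phenomenon flagged in the final paragraph of Section \ref{sec:prelim} about chambers hidden in tilt stability; resolving it requires comparing the $\lambda_{\alpha,\beta,s}$-values of the three pairs rather than merely their $\nu_{\alpha,\beta}$-values.
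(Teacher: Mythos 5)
Your proposal is correct and follows essentially the same route as the paper, which derives the corollary directly by combining Theorem \ref{thm:wall_intersecting_hyperbola} (parts (1), (3), and (2), in that logical order) with the tilt-stability classification of Theorem \ref{thm:tiltStabilityEllipticQuartics}, including the key observation that the three sequences on the outermost tilt semicircle separate into the distinct Bridgeland walls (3)--(5). One small slip: along the relevant branch of the hyperbola the path crosses the \emph{smallest} tilt semicircle first and the largest last (the moduli spaces run from $M_0=\emptyset$ inside the smallest wall outward to the Hilbert scheme), so your stated ordering (1)--(5) is right but your reasoning about the radii is reversed.
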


We denote the moduli space of Bridgeland stable objects with Chern character $(1,0,-4,8)$ in the chambers from inside the smallest wall to outside the largest wall by $M_0, \ldots, M_5$. The goal of this section is to give some description of these spaces. By Theorem \ref{thm:tiltStabilityEllipticQuartics} we have $M_0 = \emptyset$. After the largest wall we must have $M_5 = \Hilb^{4t}(\P^3)$. More precisely, it is the moduli of ideal sheaves which is the same as the Hilbert scheme due to \cite[p. 1265]{MNOP06}. See Figure \ref{fig:bridgeland_walls_elliptic} for a visualization of the walls.

\begin{prop}
The first moduli space $M_1$ is isomorphic to the Grassmannian $\G(1,9)$.
\begin{proof}
All extensions in $\Ext^1(\OO(-4)[1], \OO(-2)^{\oplus 2})$ are cokernels of morphisms $\OO(-4) \to \OO(-2)^{\oplus 2}$. The stability condition ensures that the two quadrics defining it are not collinear. Therefore, these extensions parametrize pencils of quadrics and the moduli space is the Grassmannian $\G(1,9)$.
\end{proof}
\end{prop}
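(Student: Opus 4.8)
The plan is to identify $M_1$ explicitly by analyzing the Bridgeland stable objects in the first chamber. By Corollary \ref{cor:allwalls}, the smallest wall is defined by the pair $\OO(-2)^{\oplus 2}$ and $\OO(-4)[1]$, and all strictly semistable objects with Chern character $v=(1,0,-4,8)$ at this wall are extensions of these two objects. Immediately after crossing this wall (i.e. in the chamber $M_1$), the stable objects will precisely be the nontrivial extensions that survive, so the first step is to understand the relevant extension group.

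First I would compute $\Ext^1(\OO(-4)[1], \OO(-2)^{\oplus 2})$. Shifting, this is $\Ext^0(\OO(-4), \OO(-2)^{\oplus 2}) = \Hom(\OO(-4), \OO(-2))^{\oplus 2} = H^0(\OO(2))^{\oplus 2}$, which is naturally identified with pairs of quadric forms on $\P^3$. A nontrivial extension $0 \to \OO(-2)^{\oplus 2} \to E \to \OO(-4)[1] \to 0$ is classified by such a pair, and the extension object $E$ is quasi-isomorphic to the cone, hence to the cokernel of the corresponding map $\OO(-4) \to \OO(-2)^{\oplus 2}$ given by the two quadrics (this is the content of the first sentence of the proof: the extension is realized as a two-term complex which, when the map is injective, is a genuine sheaf concentrated in degree zero). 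A Chern character check confirms this cokernel has the correct class $v$.

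Next I would extract the stability constraint. Two collinear quadrics $(f, g)$ with $g = cf$ produce a map $\OO(-4) \to \OO(-2)^{\oplus 2}$ whose cokernel is not stable: geometrically the extension is split or the resulting object is destabilized by a subobject of the form $\OO(-2)$. Thus stability forces the two quadrics to be linearly independent, so the object depends only on the $2$-dimensional subspace they span inside $H^0(\OO(2)) \cong \C^{10}$, i.e.\ on a pencil of quadrics. This is exactly a point of the Grassmannian $\G(1,9)$ of lines in $|\OO_{\P^3}(2)| = \P^9$. The hardest part will be verifying that this set-theoretic bijection with pencils of quadrics upgrades to an isomorphism of moduli spaces: one must check that the universal family on $\G(1,9)$ induces the claimed morphism and that it is an isomorphism rather than merely a bijection. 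Here I would invoke the general framework, using Lemma \ref{lem:wall_crossing} to guarantee that these extensions are genuinely Bridgeland stable in a neighborhood on the correct side of the wall, and the moduli-theoretic results (Theorem \ref{thm:wall_intersecting_hyperbola}) to identify the moduli space with the tautological parameter space; since $\G(1,9)$ is smooth and the family is flat with the right fibers, the classifying morphism is an isomorphism.
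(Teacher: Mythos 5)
Your argument is correct and follows essentially the same route as the paper: identify $\Ext^1(\OO(-4)[1],\OO(-2)^{\oplus 2})$ with pairs of quadrics, realize the extensions as cokernels of the corresponding maps $\OO(-4)\to\OO(-2)^{\oplus 2}$, observe that stability rules out collinear quadrics, and conclude that stable objects correspond to pencils, i.e.\ points of $\G(1,9)$. You simply make explicit some steps (the $\Ext$ computation, the role of the $\GL_2$-redundancy, and the upgrade from a bijection to an isomorphism via the universal family) that the paper leaves implicit.
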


The tangent space of a moduli space of Bridgeland stable objects at any stable complex $E$ is given by $\Ext^1(E,E)$ (see \cite{Ina02} and \cite{Lie06} for the deformation theory of moduli spaces of complexes). Obtaining these groups requires a substantial amount of diagram chasing and computations. In order to minimize the distress on the reader and the authors, we will prove the following lemma with heavy usage of \cite{M2}.

\begin{lem}
\label{lem:ext-computations}
Let notation be as in Theorem \ref{cor:allwalls}. The equalities
\begin{align*}
\Ext^1(\II_L(-1), \OO_V(-3)) = \C &, \ \Ext^1(\OO_V(-3), \II_L(-1)) = \C^9, \\
\Ext^1(\II_L(-1), \II_L(-1)) = \C^4 &, \ \Ext^1(\OO_V(-3), \OO_V(-3)) = \C^3, \\
\Ext^1(\II_Z(-1), \OO_V(-4)) = \begin{cases} \C &, \ Z \subset V \\ 0 &, \text{ otherwise} \end{cases} &, \ \Ext^1(\OO_V(-4), \II_Z(-1)) = \C^{15}, \\
\Ext^1(\II_Z(-1), \II_Z(-1)) = \C^6 &, \ \Ext^1(\OO_V(-4), \OO_V(-4)) = \C^3, \\ 
\Ext^1(\II_P(-1), \II_{Q/V}(-4)) = \begin{cases} \C^3 &, \ P = Q \\ \C &, \ P \neq Q \end{cases} &, \ \Ext^1(\II_{Q/V}(-4), \II_P(-1)) = \begin{cases} \C^{17} &, \ P = Q \\ \C^{15} &, \ P \neq Q, \end{cases} \\
\Ext^1(\II_P(-1), \II_P(-1)) = \C^3 &, \ \Ext^1(\II_{Q/V}(-4), \II_{Q/V}(-4)) = \C^5, \\ 
\Ext^1(\OO(-1), \II_{Z'/V}(-4)) = \C^2 &, \ \Ext^1(\II_{Z'/V}(-4), \OO(-1)) = \C^{15}, \\
\Ext^1(\OO(-1), \OO(-1)) = 0 &, \ \Ext^1(\II_{Z'/V}(-4), \II_{Z'/V}(-4)) = \C^7
\end{align*}
hold.
If $Z \subset V$ is a double point supported at $P$, then
\begin{align*}
\Ext^1(\II_Z(-1), \II_{P/V}(-4)) = \C^3 &, \ \Ext^1(\OO_V(-4)), \II_{P/V}(-4)) = \C^2, \\
\Ext^1(\II_Z(-1), \II_P(-1)) = \C^3 &, \ \Ext^1(\OO_V(-4)), \II_P(-1)) = \C^{15}.
\end{align*}
\begin{proof}
Under the action of $\PGL(4)$ there are two orbits of pairs of a line and a plane $(L,V)$. Either we have $L \subset V$ or not. By choosing representatives defined over $\Q$, we can use \cite{M2} to compute $\Ext^1(\II_L(-1), \OO_V(-3)) = \C$, $\Ext^1(\OO_V(-3), \II_L(-1)) = \C^9$, $\Ext^1(\OO_V(-3), \OO_V(-3) = \C^3$ and $\Ext^1(\II_L(-1), \II_L(-1) = \C^4$. All other equalities follow in the same way. The Macaulay2 code can be found in Appendix A.
\end{proof}
\end{lem}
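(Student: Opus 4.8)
The plan is to compute each group by choosing a $\PGL(4)$-representative of the relevant geometric configuration, replacing each sheaf by a locally free resolution or a short presentation, and reducing everything to the cohomology of line bundles on $\P^3$, on the planes $V \cong \P^2$, and on the lines that appear. Every configuration in the table (a line and a plane, a point and a plane, two points, a point together with a point in a plane, a double point, etc.) has only finitely many $\PGL(4)$-orbits, distinguished by incidence data such as whether $L \subset V$, whether $Z \subset V$, whether $P = Q$, or whether the length-two scheme is reduced or a double point. It therefore suffices to treat one representative per orbit, and one may take representatives defined over $\Q$ so that all intermediate computations are exact.

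First I would record the resolutions and presentations. A line, a point, and a plane are complete intersections, so $\II_L(-1)$, $\II_P(-1)$, $\OO_V(-3)$ and $\OO_V(-4)$ all admit Koszul resolutions by sums of line bundles, for instance $0 \to \OO(-3) \to \OO(-2)^{\oplus 2} \to \II_L(-1) \to 0$ and $0 \to \OO(-4) \to \OO(-3) \to \OO_V(-3) \to 0$. A length-two subscheme $Z \subset \P^3$ lies on a unique line $L$, and I would present it by $0 \to \II_L(-1) \to \II_Z(-1) \to \OO_L(-3) \to 0$, reducing its groups to those of $\II_L(-1)$ and of a line bundle on $L \cong \P^1$. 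The planar ideal sheaves $\II_{Q/V}$ and $\II_{Z'/V}$ are resolved inside $V \cong \P^2$ by the usual presentations of points in a plane, and each $\OO_V$-term is pulled back to $\P^3$ through $0 \to \OO(-1) \to \OO \to \OO_V \to 0$; the line bundle $\OO(-1)$ needs no resolution. All diagonal entries $\Ext^1(F,F)$ and the final block about double points are obtained by the same procedure.

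With the resolutions in hand, each $\Ext^\bullet(F,G)$ is the cohomology of $\RHom(F,G)$, computed as the $\Hom$-complex of the two resolutions; its terms are $\Hom$'s between line bundles, i.e. spaces of global sections, together with cohomology on the $V$'s, all of which are standard. To halve the work and to supply an internal check I would invoke Serre duality on $\P^3$ in the form $\Ext^i(F,G) \cong \Ext^{3-i}(G, F(-4))^{\vee}$, which links the two columns of the table across complementary cohomological degrees, together with the Euler-characteristic identity $\chi(F,G) = \sum_i (-1)^i \ext^i(F,G) = \int_{\P^3} \ch(F)^{\vee}\,\ch(G)\,\td(\P^3)$; the latter depends only on Chern characters and is constant in families, so it pins down the alternating sum independently of the case.

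The main obstacle is exactly the case distinctions, where a purely numerical count is not enough: the ranks of the connecting homomorphisms in the long exact $\Ext$-sequences depend on the relative position of the sheaves. Whether $\Ext^1(\II_Z(-1), \OO_V(-4))$ equals $\C$ or $0$, and whether $\Ext^1(\II_P(-1), \II_{Q/V}(-4))$ equals $\C^3$ or $\C$, is governed by the rank of a restriction or multiplication map on sections that jumps when the incidence degenerates ($Z \subset V$, or $P = Q$). Determining these ranks by hand is feasible but requires tracking the actual maps rather than only the dimensions of their source and target; this is the step for which I would cross-check every entry against \cite{M2}, using the rational representatives to keep the computation exact.
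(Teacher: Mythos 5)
Your proposal is correct and follows essentially the same route as the paper: reduce to finitely many $\PGL(4)$-orbits of the relevant configurations, choose representatives defined over $\Q$, and compute the $\Ext$-groups explicitly --- the paper delegates the entire computation to Macaulay2 (Appendix A), whereas you additionally sketch how it could be carried out by hand via Koszul resolutions, long exact sequences, Serre duality and Euler characteristics. Since you yourself fall back on Macaulay2 for precisely the delicate rank-jump cases that distinguish $Z \subset V$ from $Z \not\subset V$ and $P = Q$ from $P \neq Q$, the two proofs are in substance the same.
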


Since the dimension of tangent spaces is bounded from below by the dimension of the space, the following lemma can sometimes simplify computations.

\begin{lem}
\label{lem:ext_estimate}
Let $0 \to F^n \to E \to G^m \to 0$ be an exact sequence at a wall in Bridgeland stability where $F$ and $G$ are distinct stable objects of the same Bridgeland slope and $E$ is semistable to one side of the wall. Then the following inequality holds,
\[\ext^1(E,E) \leq n^2 \ext^1(F,F) + m^2 \ext^1(G,G) + nm \ext^1(F,G) + nm\ext^1(G,F) - n^2. \]
\begin{proof}
Stability to one side of the wall implies $\Hom(E,F) = 0$. Since $F$ is stable, we also know $\Hom(F,F) = \C$. By the long exact sequence coming from applying $\Hom(\cdot, F)$ to the above exact sequence, we get $\ext^1(E, F) \leq m \ext^1(G, F) + n \ext^1(F, F) - n$. Moreover, we can use $\Hom(\cdot, G)$ to get $\ext^1(E, G) \leq m \ext^1(G, G) + n \ext^1(F, G)$. These two inequalities together with applying $\Hom(E, \cdot)$ lead to the claim.
\end{proof}
\end{lem}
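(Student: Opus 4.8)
The plan is to reduce the estimate to three applications of the long exact sequences for $\Hom$ and $\Ext$ attached to the defining short exact sequence, together with two vanishing statements that are the only places where stability enters. Applying $\Hom(E, -)$ to $0 \to F^n \to E \to G^m \to 0$ produces the exact piece $\Ext^1(E, F^n) \to \Ext^1(E, E) \to \Ext^1(E, G^m)$, and hence the bound
\[
\ext^1(E,E) \leq \ext^1(E, F^n) + \ext^1(E, G^m) = n\,\ext^1(E,F) + m\,\ext^1(E,G).
\]
So it suffices to bound $\ext^1(E,F)$ and $\ext^1(E,G)$ individually.

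First I would record the inputs coming from stability. Because $F$ and $G$ are stable of equal slope at the wall and are non-isomorphic, Schur's lemma gives $\Hom(F,F) = \Hom(G,G) = \C$ and $\Hom(F,G) = 0$. Moreover, on the side of the wall where $E$ is semistable the destabilizing filtration forces $\mu_\sigma(F) < \mu_\sigma(E)$; a nonzero morphism $E \to F$ would then factor through an image that is simultaneously a quotient of the semistable object $E$ and a subobject of the stable object $F$, forcing $\mu_\sigma(E) \leq \mu_\sigma(F)$, a contradiction. Hence $\Hom(E,F) = 0$. All the $\Ext$-groups in question are computed in $D^b(\P^3)$ and are therefore independent of the chosen stability condition, so these vanishings may be used freely while evaluating $\ext^1(E,E)$ at the wall.

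Next I would run the two contravariant long exact sequences. Applying $\Hom(-, F)$ and using $\Hom(E,F)=0$ together with $\hom(F^n, F) = n$ yields the exact piece
\[
0 \to \Hom(F^n, F) \to \Ext^1(G^m, F) \to \Ext^1(E, F) \to \Ext^1(F^n, F),
\]
whence $\ext^1(E,F) \leq m\,\ext^1(G,F) + n\,\ext^1(F,F) - n$. Applying $\Hom(-, G)$ and using $\Hom(F^n, G) = 0$ yields
\[
0 \to \Ext^1(G^m, G) \to \Ext^1(E, G) \to \Ext^1(F^n, G),
\]
whence $\ext^1(E,G) \leq m\,\ext^1(G,G) + n\,\ext^1(F,G)$. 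Substituting both estimates into the displayed inequality from the first paragraph and collecting terms produces exactly the claimed bound, the final $-n^2$ arising from $n$ times the correction $-n$.

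The argument is essentially formal once the two vanishings are in hand, so the step I expect to be the main obstacle is the identification $\Hom(E,F)=0$: one must pin down the slope inequality on the correct side of the wall and use stability of both $E$ and $F$, rather than of $E$ alone. The only other point requiring care is the dimension bookkeeping through the long exact sequences, in particular retaining the term $\hom(F^n,F)=n$, which is precisely what yields the subtracted $n^2$ in the statement.
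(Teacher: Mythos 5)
Your proposal is correct and follows essentially the same route as the paper: the vanishing $\Hom(E,F)=0$ from semistability on one side of the wall, the bound $\ext^1(E,F) \leq m\,\ext^1(G,F)+n\,\ext^1(F,F)-n$ from $\Hom(-,F)$, the bound on $\ext^1(E,G)$ from $\Hom(-,G)$, and finally $\Hom(E,-)$ to assemble the claim. You merely spell out the justification of $\Hom(E,F)=0$ and the dimension bookkeeping in more detail than the paper does.
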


We also have to handle the issue of potentially new components after crossing a wall. The following result will solve this issue in some cases.

\begin{lem}
\label{lem:new_components}
Let $M$ and $N$ be two moduli spaces of Bridgeland semistable objects separated by a single wall. Assume that $A \subset M$ and $B \subset N$ are the loci destabilized at the wall. If $A$ intersects an irreducible component $H$ of $M$ non trivially and $H$ is not contained in $A$, then $B$ must intersect the closure of $H \backslash A$ inside $N$.
\begin{proof}
This follows from the fact that moduli spaces of Bridgeland semistable objects are universally closed. If $B$ would not intersect the closure of $H \backslash A$ inside $N$, then this would correspond to a component in $N$ that is not universally closed.
\end{proof}
\end{lem}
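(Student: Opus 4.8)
The plan is to exploit the universal closedness of Bridgeland moduli spaces (\cite{PT15}) together with the elementary observation that crossing a single wall only alters the destabilized loci. First I would record the basic dichotomy of wall crossing: the objects parametrized by $M \setminus A$ are exactly those that remain semistable on both sides of the wall, and the same holds for $N \setminus B$. Since these are literally the same objects, there is a canonical isomorphism $M \setminus A \cong N \setminus B$ of the open complements, and I would fix this identification and work with it throughout. Note also that $A$ is closed, being the complement of the (open) locus of objects that stay stable.

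Next I would analyze the geometry of $H \setminus A$. Because $A$ is closed and $H$ is an irreducible component with $H \not\subset A$, the set $H \setminus A$ is a nonempty open dense subset of $H$; and since $H$ is closed in $M$, the intersection $H \setminus A = H \cap (M \setminus A)$ is closed in the open subspace $M \setminus A$. Transporting along $M \setminus A \cong N \setminus B$, I obtain an irreducible subset $W \subset N$ that is closed in $N \setminus B$ and satisfies $W \cong H \setminus A$. The object named in the statement is then $\overline{W}$, the closure of $W$ inside $N$, i.e. the closure of $H \setminus A$ inside $N$.

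The argument would proceed by contradiction. Suppose $B \cap \overline{W} = \emptyset$. Then $\overline{W} \subset N \setminus B$, and since $W$ is already closed in the open set $N \setminus B$ while $\overline{W}$ is closed in $N$ and contained in $N \setminus B$, I would conclude $\overline{W} = W \cong H \setminus A$. On the other hand, $\overline{W}$ is a closed subspace of the universally closed $N$, hence is itself universally closed. This is the contradiction I am after: $H \setminus A$ is a nonempty \emph{proper} open subset of the irreducible $H$ (proper precisely because $A \cap H \neq \emptyset$), so it fails the valuative criterion for universal closedness, a limit point lying in $H \cap A$ witnessing the failure. Therefore $B$ must meet $\overline{W}$, which is exactly the assertion.

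The main obstacle, and the step deserving the most care, is to make the identification $M \setminus A \cong N \setminus B$ and the transport of closed subloci precise at the level of moduli stacks or algebraic spaces rather than merely on underlying point sets, so that ``universally closed'' may legitimately be applied to $\overline{W}$ as a closed substack of $N$. Once this foundational identification is established, the contradiction via universal closedness is immediate and the lemma follows.
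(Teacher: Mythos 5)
Your proposal is correct and takes essentially the same route as the paper: the paper's two-sentence proof --- universal closedness of Bridgeland moduli spaces plus the observation that a missed intersection would yield a closed locus of $N$ that is not universally closed --- is precisely the argument you spell out. Your explicit steps (the identification $M \setminus A \cong N \setminus B$ across the wall, the closedness of $H \setminus A$ in that common open locus, and the fact that a proper dense open subset of the irreducible, proper $H$ cannot itself be universally closed) are exactly what the paper's compressed wording leaves implicit.
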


In order to identify the global structure of some of the moduli spaces as blow ups we need the following classical result by Moishezon. Recall that the analytification of a smooth proper algebraic spaces of finite type over $\C$ of dimension $n$ is a complex manifold with $n$ independent meromorphic functions.

\begin{thm}[\cite{Moi67}]
\label{thm:blow_up}
Any birational morphism $f: X \to Y$ between smooth proper algebraic spaces of finite type over $\C$ such that the contracted locus $E$ is irreducible and the image $f(E)$ is smooth is the blow up of $Y$ in $f(E)$.
\end{thm}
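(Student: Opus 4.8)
The plan is to realize $X$ as the blow-up of $Y$ along $Z := f(E)$ by factoring $f$ through $\pi\colon \widetilde{Y} := \mathrm{Bl}_Z Y \to Y$ and then showing the factorization is an isomorphism. First I would record the basic geometry of $f$. Since $f$ is birational and $Y$ is smooth, hence locally factorial and normal, $f$ is an isomorphism over $Y \setminus Z$ and $f^{-1}(Z) = E$ set-theoretically; because a divisor is contracted onto $Z$, its image $Z$ has codimension $c \geq 2$ in $Y$. By purity of the exceptional locus for a proper birational morphism of smooth spaces (van der Waerden / Zariski), $\mathrm{Exc}(f) = E$ is pure of codimension one, which is consistent with the hypothesis that $E$ is an irreducible divisor. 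All of the tools I will use — purity, Zariski's Main Theorem, and the universal property of blow-ups — are available in the category of algebraic spaces, so working there causes no extra difficulty.

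Next I would produce the factorization. By the universal property of the blow-up, $f$ factors as $f = \pi \circ g$ for a unique morphism $g\colon X \to \widetilde{Y}$ if and only if the pulled-back ideal sheaf $\II_Z \cdot \OO_X$ is invertible. This invertibility is the technical heart of the statement and the step I expect to be hardest. Writing $\II_Z = (x_1, \ldots, x_c)$ in local coordinates transverse to the smooth center $Z$, and letting $t$ be a local equation of the irreducible divisor $E$, each $f^{\#} x_i$ is divisible by $t$, so $\II_Z \cdot \OO_X = t^a \cdot (h_1, \ldots, h_c)$ for some $a \geq 1$; invertibility amounts to showing the $h_i$ have no common zero along $E$, equivalently that the scheme $f^{-1}(Z)$ is the \emph{reduced} Cartier divisor $E$. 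This is exactly where smoothness of $X$ and of $Z$ together with irreducibility of $E$ must enter. The plan is to pass to the completion of $\OO_X$ along a fiber $f^{-1}(z)$ and, using that $f|_E\colon E \to Z$ is equidimensional with $(c-1)$-dimensional fibers over the smooth base $Z$, to identify $f^{-1}(z)$ with $\P^{c-1}$ and its conormal structure, thereby forcing multiplicity one. Granting this local analysis, $\II_Z \cdot \OO_X = \OO_X(-E)$ is invertible and $g$ exists.

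Finally I would show that $g$ is an isomorphism, which is comparatively formal. The morphism $g$ is proper and birational between smooth, hence normal, algebraic spaces, and it restricts to the isomorphism $X \setminus E \xrightarrow{\sim} \widetilde{Y} \setminus \widetilde{E}$, where $\widetilde{E} = \pi^{-1}(Z)$ is the exceptional divisor of $\pi$. For any proper birational morphism to a normal space the locus over which it fails to be an isomorphism, namely $g(\mathrm{Exc}(g))$, has codimension at least two; it therefore cannot contain the codimension-one divisor $\widetilde{E}$, so $g$ is an isomorphism over a dense open subset of $\widetilde{E}$ and in particular is a local isomorphism at the generic point of $E$. Thus $\mathrm{Exc}(g)$ is a proper closed subset of the irreducible divisor $E$. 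But by purity $\mathrm{Exc}(g)$, if non-empty, is pure of codimension one in $X$, and a codimension-one closed subset strictly contained in the irreducible divisor $E$ cannot exist; hence $\mathrm{Exc}(g) = \emptyset$ and $g$ is an isomorphism. Therefore $f$ is the blow-up of $Y$ along $Z = f(E)$, as claimed. The only genuinely delicate point is the invertibility established in the second paragraph; everything else is standard birational geometry.
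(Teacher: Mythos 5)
The paper does not prove this statement at all: it is quoted as a classical theorem of Moishezon, and the sentence preceding it in the text (``the analytification of a smooth proper algebraic space of finite type over $\C$ of dimension $n$ is a complex manifold with $n$ independent meromorphic functions'') is there precisely to explain why the analytic result of \cite{Moi67}, proved for Moishezon manifolds, applies to the algebraic spaces at hand. So you are attempting something the authors deliberately avoid, namely a self-contained proof.

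Your outline has the right architecture --- factor $f$ through $\mathrm{Bl}_{f(E)}Y$ via the universal property of blow-ups, then kill the exceptional locus of the factorization $g$ by purity --- and your first and third paragraphs are essentially correct and standard. But the second paragraph contains a genuine gap, and it is exactly where the entire content of the theorem lives. Invertibility of $\II_{f(E)}\cdot\OO_X$, i.e.\ the statement that the ideal $(h_1,\dots,h_c)$ you extract after dividing out by $t^a$ is the unit ideal, is not established: you ``plan'' to deduce it by showing that $f|_E$ is equidimensional with $(c-1)$-dimensional fibers and that each fiber $f^{-1}(z)$ is $\P^{c-1}$ with the expected conormal structure, and then say ``granting this local analysis.'' None of these intermediate claims is proved, and each is at least as deep as the theorem itself: upper semicontinuity only gives $\dim f^{-1}(z)\geq c-1$, so equidimensionality needs an argument; and identifying the fibers with $\P^{c-1}$ (rather than some other, possibly singular or non-reduced, positive-dimensional scheme) is essentially equivalent to the conclusion you are after. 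This is precisely the part of Moishezon's proof that requires real work (an induction on dimension and a careful analysis of the structure of $E\to f(E)$), so as written your argument is a correct reduction of the theorem to its hardest step, not a proof. A smaller inaccuracy in the same paragraph: invertibility of $\II_{f(E)}\cdot\OO_X$ is equivalent to $(h_1,\dots,h_c)=(1)$, i.e.\ to $f^{-1}(f(E))=aE$ as Cartier divisors for some $a\geq 1$; it is not equivalent to $f^{-1}(f(E))$ being the \emph{reduced} divisor $E$, which is the stronger assertion $a=1$ and is not needed to invoke the universal property.
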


\begin{prop}
\label{prop:second_moduli}
The second moduli space $M_2$ is the blow up of $\G(1,9)$ along the smooth locus $\G(1,3) \times (\P^3)^{\vee}$ parametrizing pairs $(\II_L(-1), \OO_V(-3))$. The center of the blow up parametrizes pencils whose base locus is not of dimension one. A generic point of the exceptional divisor parametrizes the union of a line and a plane cubic which intersect themselves at a point. The only non-ideal sheaves in the moduli space come from the case when the line is contained in the plane.
\begin{proof}
We know that $M_1$ is smooth. The wall separating $M_1$ and $M_2$ has strictly semistable objects given by extensions between $\II_L(-1)$ and $\OO_V(-3)$. By Lemma \ref{lem:ext-computations} we have $\Ext^1(\II_L(-1), \OO_V(-3)) = \C$, $\Ext^1(\OO_V(-3), \II_L(-1)) = \C^9$, $\Ext^1(\OO_V(-3), \OO_V(-3)) = \C^3$, and $\Ext^1(\II_L(-1), \II_L(-1)) = \C^4$.

This means the locus of semistable objects occurring as extensions in $\Ext^1(\II_L(-1), \OO_V(-3))$ for any $L$ and $V$ is isomorphic to $\G(1,3) \times (\P^3)^{\vee}$, i.e. is smooth and irreducible. By Lemma \ref{lem:wall_crossing} this is the locus destabilized at the wall in $\G(1,9)$. By Lemma \ref{lem:ext_estimate} any extension $E$ in $\Ext^1(\OO_V(-3), \II_L(-1))$ satisfies $\ext^1(E,E) \leq 16$. Lemma \ref{lem:new_components} shows that $M_2$ has to be connected, i.e. is smooth and irreducible. The locus of semistable objects that can be written as extensions in $\Ext^1(\OO_V(-3), \II_L(-1))$ for any $L$ and $V$ is irreducible of dimension $15$, i.e. is a divisor in $M_2$. An immediate application of Theorem \ref{thm:blow_up} implies the fact that $M_2$ is the blow up of $\G(1,9)$ in the smooth locus $\G(1,3) \times (\P^3)^{\vee}$.

The description of the exceptional divisor is immediate from the fact that curves $C$ with ideal sheaves fitting into an exact sequence
\[
0 \to \II_L(-1) \to \II_C \to \OO_V(-3) \to 0
\]
have to be unions of lines with a plane cubic intersecting in one point. If $L \subset V$, then no such extension can be an ideal sheaf, since the line would intersect the cubic in three points giving the wrong genus.
\end{proof}
\end{prop}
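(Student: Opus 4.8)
The plan is to first identify the two loci that are modified at the wall separating $M_1$ and $M_2$, then to prove that $M_2$ is smooth and irreducible, and finally to exhibit a birational contraction $M_2 \to \G(1,9)$ to which Moishezon's criterion (Theorem \ref{thm:blow_up}) applies. By Corollary \ref{cor:allwalls} the strictly semistable objects at this wall are exactly the nonsplit extensions of $\II_L(-1)$ and $\OO_V(-3)$, and Lemma \ref{lem:wall_crossing} dictates which extensions are stable on each side. On the $\G(1,9)$ side the stable objects are the extensions $0 \to \OO_V(-3) \to E \to \II_L(-1) \to 0$ classified by $\Ext^1(\II_L(-1),\OO_V(-3)) = \C$; since this group is one dimensional, each pair $(L,V)$ contributes a single such $E$, so the destabilized locus $A$ is the image of $\G(1,3) \times (\P^3)^{\vee}$ and is smooth and irreducible of dimension $7$. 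On the $M_2$ side the stable objects are the extensions $0 \to \II_L(-1) \to E \to \OO_V(-3) \to 0$ classified by $\Ext^1(\OO_V(-3),\II_L(-1)) = \C^9$; projectivizing and varying $(L,V)$ produces an irreducible new locus $B$ of dimension $7 + 8 = 15$, a divisor in the $16$-dimensional $M_2$.

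Next I would settle smoothness and irreducibility. Away from $A$ and $B$ the two moduli spaces coincide, so $M_2 \setminus B \cong M_1 \setminus A$ is smooth of dimension $16 = \dim \G(1,9)$. Lemma \ref{lem:new_components} applied to the irreducible $M_1$ and the proper subvariety $A$ forces $B$ to meet the closure of $M_1 \setminus A$ inside $M_2$, ruling out any spurious new component and showing $M_2$ is connected. For a new object $E \in B$, Lemma \ref{lem:ext_estimate} with $n = m = 1$, $F = \II_L(-1)$, $G = \OO_V(-3)$ and the values in Lemma \ref{lem:ext-computations} gives $\ext^1(E,E) \le 4 + 3 + 1 + 9 - 1 = 16$. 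As $\ext^1(E,E)$ is the dimension of the Zariski tangent space at $E$ and $E$ lies in the closure of the $16$-dimensional locus $M_2 \setminus B$, equality $\ext^1(E,E) = 16$ must hold; hence $M_2$ is smooth at every point, and being connected and smooth it is irreducible.

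With $M_2$ understood, sending each stable object to the stable object of the same S-equivalence class on the $\G(1,9)$ side gives a birational morphism $f \colon M_2 \to \G(1,9)$ that is an isomorphism off $B$ and contracts the $\P^8$ over each $(L,V) \in A$ to the corresponding point. Both spaces are smooth and proper (using \cite{PT15} together with the smoothness just proved), the contracted locus $B$ is irreducible, and $f(B) = A \cong \G(1,3) \times (\P^3)^{\vee}$ is smooth, so Theorem \ref{thm:blow_up} identifies $M_2$ with the blow up of $\G(1,9)$ along $A$. The geometric description is then read off from the defining sequence $0 \to \II_L(-1) \to \II_C \to \OO_V(-3) \to 0$: a generic such $C$ is the union of a line and a plane cubic meeting in a single point, and when $L \subset V$ the extension cannot be an ideal sheaf, since then the line would meet the cubic in three points and yield the wrong genus, which accounts for the non-ideal sheaves.

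I expect the smoothness step to be the main obstacle. Lemma \ref{lem:ext_estimate} only supplies an upper bound $\ext^1(E,E) \le 16$, and turning this into an equality requires knowing that every point of $B$ really is a limit of stable objects from $M_2 \setminus B$, that is, that $B \subset \overline{M_2 \setminus B}$ and that $M_2$ is pure of dimension $16$ there; a priori $B$ could sit in $M_2$ as an embedded or lower-dimensional piece. Establishing this, equivalently checking that the contraction $f$ is everywhere defined and genuinely birational, is the delicate point on which the hypotheses of Moishezon's theorem ultimately rest.
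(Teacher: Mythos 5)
Your proposal is correct and follows essentially the same route as the paper: the same $\Ext$ computations from Lemma \ref{lem:ext-computations}, the same use of Lemma \ref{lem:wall_crossing} to identify the destabilized locus, Lemma \ref{lem:ext_estimate} for the tangent-space bound, Lemma \ref{lem:new_components} for connectedness, and Moishezon's Theorem \ref{thm:blow_up} to conclude. The only difference is that you spell out the dimension counts and the closure issue for the new divisor more explicitly, which is exactly the point the paper's appeal to Lemma \ref{lem:new_components} is meant to settle.
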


The next moduli space will acquire a second component.

\begin{prop}
\label{prop:third_moduli}
The third moduli space $M_3$ has two irreducible components $M_3^1$ and $M_3^2$. The first component $M_3^1$ is the blow up of $M_2$ in the smooth incidence variety parametrizing length two subschemes in a plane in $\P^3$. The second component $M_3^2$ is a $\P^{14}$-bundle over $\Hilb^2(\P^3) \times (\P^3)^{\vee}$ parametrizing pairs $(\II_Z(-1)$, $\OO_V(-4))$. It generically parametrizes unions of plane quartics with two generic points in $\P^3$. The two components intersect transversally along the exceptional locus of the blow up. The only non-ideal sheaves occur in the case where at least one of the two points is not scheme-theoretically contained in the plane.
\begin{proof}
By Lemma \ref{lem:ext-computations} we have
\begin{align*}
\Ext^1(\II_Z(-1), \OO_V(-4)) &= \begin{cases} \C &, \ Z \subset V \\ 0 &, \text{ otherwise,} \end{cases} \\
\Ext^1(\OO_V(-4), \II_Z(-1)) &= \C^{15}.
\end{align*}
This means the locus destabilized in $M_2$ is of dimension $7$, and the new locus appearing in $M_3$ is of dimension $23$. Since $M_2$ is of dimension $16$, the locus appearing in $M_3$ must be a new component $M_3^2$. The closure of what is left of $M_2$ is denoted by $M_3^1$.  If $M_3^2$ is reduced, it is a $\P^{14}$-bundle over $\Hilb^2(\P^3) \times (\P^3)^{\vee}$ parametrizing pairs $(\II_Z(-1)$, $\OO_V(-4))$. We will more strongly show that it is smooth.

Assume $Z$ is not scheme theoretically contained in $V$. Then Lemma \ref{lem:ext_estimate} implies that any non-trivial extension $E$ in $\Ext^1(\OO_V(-4), \II_Z(-1))$ satisfies $\ext^1(E,E) \leq 23$. Therefore, it is a smooth point and can in particular not lie on $M_3^1$. Let $E$ be an extensions of the form $0 \to \II_Z(-1) \to E \to \OO_V(-4) \to 0$, where $Z \subset V$. Any point on the intersection must satisfy $\ext^1(E,E) \geq 24$. Assume $E$ is not an ideal sheaf. If $E$ fits into an exact sequence $0 \to \II_{Z/V}(-4) \to E \to \OO(-1) \to 0$ or $0 \to \II_{Q/V}(-4) \to E \to \II_P(-1) \to 0$ for $P \neq Q$, then a direct application of Lemma \ref{lem:ext_estimate} to these sequences shows $\ext^1(E,E) \leq 23$, a contradiction. Therefore, $E$ must fit into an exact sequence $0 \to \II_{P/V}(-4) \to E \to \II_P(-1) \to 0$. Then we have the following commutative diagram with short exact rows and columns.

\centerline{
\xymatrix{
0 \ar@{^{(}->}[r] \ar@{^{(}->}[d] & \II_{P/V}(-4) \ar@{->>}[r] \ar@{^{(}->}[d] & \II_{P/V}(-4) \ar@{^{(}->}[d] \\
\II_Z(-1) \ar@{^{(}->}[r] \ar@{->>}[d] & E \ar@{->>}[r] \ar@{->>}[d] & \OO_V(-4) \ar@{->>}[d] \\
\II_Z(-1) \ar@{^{(}->}[r] & \II_P(-1) \ar@{->>}[r] & \OO_P \\
}}

Therefore, $Z$ has to be a double point supported at $P$. By Lemma \ref{lem:ext-computations} we have
\begin{align*}
\Ext^1(\II_Z(-1), \II_{P/V}(-4)) = \C^3 &, \ \Ext^1(\OO_V(-4)), \II_{P/V}(-4)) = \C^2, \\
\Ext^1(\II_Z(-1), \II_P(-1)) = \C^3 &, \ \Ext^1(\OO_V(-4)), \II_P(-1)) = \C^{15}.
\end{align*}
Next, we apply $\Hom(\cdot, \II_{P/V}(-4))$ to  $0 \to \II_Z(-1) \to E \to \OO_V(-4) \to 0$ to get $\ext^1(E, \II_{P/V}(-4)) \leq 5$. By applying $\Hom(\cdot, \II_P(-1))$ to the same sequence we get $\ext^1(E, \II_P(-1)) \leq 18$. Finally, we can apply $\Hom(E, \cdot)$ to $0 \to \II_{P/V} \to E \to \II_P(-1) \to 0$ to get $\ext^1(E,E) \leq 23$.

Therefore, the intersection of $M_3^1$ and $M_3^2$ parametrizes some of the ideals fitting into an exact sequence $0 \to \II_Z(-1) \to I_C \to \OO_V(-4) \to 0$, where $Z \subset V$. The intersection must have a closed orbit. By Proposition \ref{prop:ideal_sequence_three}, there is precisely one such closed orbit. If the intersection was disconnected, it would have at least two closed orbits. If it is reducible, then the closed orbit must lie on the intersection of all irreducible components. By Proposition \ref{prop:piene-schlessinger} the intersection along the closed orbit is transversal of dimension $15$, and its points are smooth on both components. That would be impossible if the intersection is not irreducible at the closed orbit. The singular locus on either component is closed and must therefore contain a closed orbit. Thus, the whole intersection must consist of points that are smooth on each of the two components individually. The induced map $M_3^1 \to M_2$ contracts the intersection, which is an irreducible divisor, onto a locus isomorphic to the smooth incidence variety parametrizing length two subschemes in a plane in $\P^3$. Theorem \ref{thm:blow_up} implies the description of $M_3^1$.

The description of the curves parametrized by $M_3^2$ is again a consequence of the exact sequence that the ideal sheaves fit into.
\end{proof}
\end{prop}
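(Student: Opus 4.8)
The plan is to analyze the wall separating $M_2$ and $M_3$, which by Corollary \ref{cor:allwalls} has strictly semistable objects given by extensions of $\II_Z(-1)$ and $\OO_V(-4)$. First I would locate the destabilized and the newly appearing loci using Lemma \ref{lem:ext-computations}. On the $M_2$-side the destabilized objects are the extensions governed by $\Ext^1(\II_Z(-1),\OO_V(-4))$, which is nonzero only when $Z\subset V$; parametrizing such pairs $(Z,V)$ gives a locus of dimension $4+3=7$. On the $M_3$-side the new objects are extensions governed by $\Ext^1(\OO_V(-4),\II_Z(-1))=\C^{15}$, and letting $Z\in\Hilb^2(\P^3)$, $V\in(\P^3)^{\vee}$ and the extension class vary over $\P^{14}$ produces a family of dimension $6+3+14=23$. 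Since $\dim M_2=16<23$, this family cannot survive inside the closure of what is left of $M_2$ and must form a genuine new component $M_3^2$; I set $M_3^1$ to be the closure of the complement.

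Next I would establish smoothness of $M_3^2$ and isolate its intersection with $M_3^1$. Applying Lemma \ref{lem:ext_estimate} to the extension $0\to\II_Z(-1)\to E\to\OO_V(-4)\to 0$ with $Z\not\subset V$ yields $\ext^1(E,E)\le 6+3+0+15-1=23$, so these points are smooth and, by dimension, cannot lie on $M_3^1$. When $Z\subset V$ the same estimate only gives $\ext^1(E,E)\le 24$, flagging these as the candidates for the intersection. To pin them down I would assume $E$ is non-ideal and run through the possible destabilizing sequences: the cases $0\to\II_{Z/V}(-4)\to E\to\OO(-1)\to 0$ and $0\to\II_{Q/V}(-4)\to E\to\II_P(-1)\to 0$ with $P\ne Q$ each force $\ext^1(E,E)\le 23$ by Lemma \ref{lem:ext_estimate}, contradicting the bound $\ext^1(E,E)\ge 24$ required at a point lying on two components. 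This leaves only the sequence $0\to\II_{P/V}(-4)\to E\to\II_P(-1)\to 0$, and the resulting $3\times 3$ commutative diagram forces $Z$ to be a double point supported at $P$. A further round of $\Hom$ long-exact-sequence estimates, using the supplementary $\Ext$-groups in Lemma \ref{lem:ext-computations}, caps $\ext^1(E,E)\le 23$ in this case as well.

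The heart of the argument, and the step I expect to be the main obstacle, is to show that the intersection $M_3^1\cap M_3^2$ is irreducible, transversal, and of dimension $15$. All intersection points correspond to ideals fitting $0\to\II_Z(-1)\to I_C\to\OO_V(-4)\to 0$ with $Z\subset V$, so I would invoke Proposition \ref{prop:ideal_sequence_three} to reduce the closed orbits in this locus to the single ideal $(x^2,xy,xz^2,y^4)$, and then Proposition \ref{prop:piene-schlessinger} to read off that near this orbit the intersection is transversal of dimension $15$ and smooth on each component. A closed-orbit counting argument finishes the structural claim: since a universally closed intersection must contain a closed orbit on every connected and every irreducible piece, a disconnected or reducible intersection would produce a second closed orbit, or force the unique orbit onto the singular locus of a component, both of which contradict the transversality just established. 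Controlling these degenerate extensions and justifying the $\Ext$-vanishing through the Piene--Schlessinger/Macaulay2 computation is the delicate part.

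Finally I would identify $M_3^1$ as a blow up. The natural morphism $M_3^1\to M_2$ contracts the irreducible divisor $M_3^1\cap M_3^2$ onto the smooth incidence variety of length-two subschemes contained in a plane, so Theorem \ref{thm:blow_up} identifies $M_3^1$ with the blow up of $M_2$ along that center. The description of the parametrized curves as unions of plane quartics with two points, ideal exactly when both points lie scheme-theoretically in the plane, then follows directly from reading off the defining extension sequence.
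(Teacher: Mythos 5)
Your proposal is correct and follows essentially the same route as the paper's proof: the same dimension counts from Lemma \ref{lem:ext-computations}, the same smoothness bounds via Lemma \ref{lem:ext_estimate}, the same case analysis and commutative diagram ruling out non-ideal sheaves on the intersection, the same closed-orbit argument through Propositions \ref{prop:ideal_sequence_three} and \ref{prop:piene-schlessinger}, and the same appeal to Theorem \ref{thm:blow_up}. You have also correctly identified the Piene--Schlessinger local analysis as the genuinely delicate step.
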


In order to reprove the description of the main component of the Hilbert scheme from \cite{VA92}, we have to make sure that none of the remaining walls modify the first component.

\begin{prop}
The fourth moduli space $M_4$ has two irreducible components $M_4^1$ and $M_4^2$. The first component is equal to $M_3^1$. The second component is birational to $M_3^2$.
\begin{proof}
Lemma \ref{lem:ext-computations} says
\begin{align*}
\Ext^1(\II_P(-1), \II_{Q/V}(-4)) &= \begin{cases} \C^3 &, \ P = Q \\ \C &, \ P \neq Q \end{cases}, \\
\Ext^1(\II_{Q/V}(-4), \II_P(-1)) &= \begin{cases} \C^{17} &, \ P = Q \\ \C^{15} &, \ P \neq Q \end{cases}.
\end{align*}
Moreover, the moduli space of pairs $(\II_P(-1), \II_{Q/V}(-4))$ is irreducible of dimension $8$, while the sublocus where $P = Q$ is of dimension $5$. Therefore, the closure of the locus of extensions in $\Ext^1(\II_{Q/V}(-4), \II_P(-1))$ for $P \neq Q$ is irreducible of dimension $22$. The locus of extensions in $\Ext^1(\II_{P/V}(-4), \II_P(-1))$ for $P \in V$ is irreducible of dimension $21$. Let $M_4^1$ be the closure of what is left from $M_3^1$ in $M_4$ and $M_4^2$ be the closure of what is left from $M_3^2$.

If $P \neq Q$, then Lemma \ref{lem:ext_estimate} implies smoothness. In particular, we can use Lemma \ref{lem:new_components} to show that all points in $\Ext^1(\II_{Q/V}(-4), \II_P(-1))$ for $P \neq Q$ are in $M_4^2$ and no other component. Assume we have a general non trivial extension $0 \to \II_P(-1) \to E \to \II_{P/V}(-4) \to 0$. Then $E = I_C$ is an ideal sheaf of a plane quartic curve plus a double point in the plane. We can assume that the double point is not an embedded point due to the fact that $E$ is general. Clearly, $I_C$ is the flat limit of elements in $\Ext^1(\II_{Q/V}(-4), \II_P(-1))$ by choosing $P \notin V$ and regarding the limit $P \to Q$. Therefore, $E$ is a part of $M_4^2$.

We showed $M_4 = M_4^1 \cup M_4^1$ and that $M_4^2$ is birational to $M_3^2$. We are left to show $M_4^1 = M_4^2$. If not, there is an object $E$ with a non trivial exact sequence $0 \to \II_P(-1) \to E \to \II_{P/V}(-4) \to 0$ in $M_4^1$. By Lemma \ref{lem:new_components} this implies that there is also an object $E'$ with non trivial exact sequence $0 \to \II_{P/V}(-4) \to E' \to \II_P(-1) \to 0$ lying on $M_3^1$. But we already established that all those extensions are smooth points on $M_3^2$ in the previous proof.
\end{proof}
\end{prop}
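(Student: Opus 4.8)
The plan is to reuse the machinery developed for $M_2$ and $M_3$: extract the governing $\Ext^1$-groups from Lemma \ref{lem:ext-computations}, count dimensions of the loci that disappear and appear at the wall defined by $\II_P(-1)$ and $\II_{Q/V}(-4)$, and then combine the smoothness estimate of Lemma \ref{lem:ext_estimate} with the universal-closedness argument of Lemma \ref{lem:new_components} to control the two components. Throughout I will keep careful track of the two extension directions: on the $M_3$ side the stable objects sit in $\Ext^1(\II_P(-1), \II_{Q/V}(-4))$, i.e. $0 \to \II_{Q/V}(-4) \to E \to \II_P(-1) \to 0$, while on the $M_4$ side they sit in $\Ext^1(\II_{Q/V}(-4), \II_P(-1))$, i.e. $0 \to \II_P(-1) \to E \to \II_{Q/V}(-4) \to 0$.

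First I would record the dimension counts. The family of pairs $(\II_P(-1), \II_{Q/V}(-4))$ has dimension $8$, with the diagonal $P = Q \in V$ forming a $5$-dimensional sublocus. Projectivizing the new extensions then yields a locus of dimension $8 + 14 = 22$ over $P \neq Q$ and a locus of dimension $5 + 16 = 21$ over $P = Q$, using $\Ext^1(\II_{Q/V}(-4), \II_P(-1)) = \C^{15}$ and $\C^{17}$ respectively. I then set $M_4^1$ (resp. $M_4^2$) to be the closure of the complement of the destabilized locus inside $M_3^1$ (resp. $M_3^2$), so that $M_4 = M_4^1 \cup M_4^2$. Since the generic point of $M_3^2$ (two points off the plane) is untouched by the wall, $M_4^2$ will automatically be birational to $M_3^2$ once I show the new locus lies in its closure.

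The birationality is handled as in Proposition \ref{prop:third_moduli}. For $P \neq Q$ the estimate of Lemma \ref{lem:ext_estimate} applied to $0 \to \II_P(-1) \to E \to \II_{Q/V}(-4) \to 0$ bounds $\ext^1(E,E)$ by the expected value, so these extensions are smooth points; Lemma \ref{lem:new_components} then places them all on $M_4^2$ and on no other component. To absorb the $P = Q$ stratum, I would take a general extension $0 \to \II_P(-1) \to E \to \II_{P/V}(-4) \to 0$, recognize $E$ as the ideal sheaf of a plane quartic together with a (generically non-embedded) double point in the plane, and exhibit it as the flat limit obtained by fixing $Q \in V$ and letting an off-plane point $P$ specialize to $Q$ in the $P \neq Q$ family. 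This shows the $21$-dimensional stratum lies in the closure of the $22$-dimensional one, so $M_4^2$ is irreducible and birational to $M_3^2$.

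The remaining and most delicate point is the equality $M_4^1 = M_3^1$, i.e. that this wall leaves the principal component untouched. Here I would argue by contradiction, running Lemma \ref{lem:new_components} backwards across the wall: if $M_4^1$ contained a genuine new extension $0 \to \II_P(-1) \to E \to \II_{P/V}(-4) \to 0$, then the reverse extension $0 \to \II_{P/V}(-4) \to E' \to \II_P(-1) \to 0$ would be forced to lie on $M_3^1$. But the proof of Proposition \ref{prop:third_moduli} already established, via the bound $\ext^1(E',E') \leq 23$, that every such reverse extension is a smooth point of $M_3^2$; since a point of the intersection would need $\ext^1 \geq 24$, these objects cannot lie on $M_3^1$ at all, and the contradiction gives $M_4^1 = M_3^1$. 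The main obstacle is precisely this last step: one must keep the two extension directions rigorously distinct and invoke the correct instance of Lemma \ref{lem:new_components}, so that the appearance of a post-wall object on the first component is genuinely incompatible with the smoothness-on-$M_3^2$ statement proved earlier. The dimension bookkeeping and the $\Ext$-bounds elsewhere are routine given Lemma \ref{lem:ext-computations}.
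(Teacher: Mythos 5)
Your proposal is correct and follows essentially the same route as the paper: the same dimension counts ($22$ and $21$ for the two strata of new extensions), the same use of Lemma \ref{lem:ext_estimate} and Lemma \ref{lem:new_components} to place the $P \neq Q$ extensions on $M_4^2$ alone, the same flat-limit degeneration $P \to Q$ to absorb the diagonal stratum, and the same contradiction via the reverse extension's smoothness on $M_3^2$ (established in the proof of Proposition \ref{prop:third_moduli}) to conclude $M_4^1 = M_3^1$. Your explicit bookkeeping of the two extension directions is, if anything, slightly more careful than the paper's write-up, which even contains a typo ("$M_4^1 = M_4^2$" where $M_4^1 = M_3^1$ is meant).
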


We can now prove the following theorem. 

\begin{thm}
\label{thm:last_wall}
The Hilbert scheme $\Hilb^{4t}(\P^3)$ has two components $\Hilb^{4t}_1$ and $\Hilb^{4t}_2$. The main component $\Hilb^{4t}_1$ contains an open subset of elliptic quartic curves and is a smooth double blow up of the Grassmannian $\G(1,9)$. The second component is of dimension $23$. Moreover, the two components intersect transversally in a locus of dimension $15$.
\begin{proof}
By Lemma \ref{lem:ext-computations} we have
\begin{align*}
\Ext^1(\OO(-1), \II_{Z'/V}(-4)) = \C^2, \\
\Ext^1(\II_{Z'/V}(-4), \OO(-1)) = \C^{15}, \\
\Ext^1(\II_{Z'/V}(-4), \II_{Z'/V}(-4)) = \C^7.
\end{align*}
The moduli space of objects $\II_{Z'/V}$ is irreducible of dimension $5$. Lemma \ref{lem:ext_estimate} implies that all strictly semistable objects at the largest wall are smooth points on either $M_4$ or $M_5 = \Hilb^{4t}(\P^3)$. Therefore, we can again use Lemma \ref{lem:new_components} to see that $\Hilb^{4t}(\P^3)$ has exactly two components birational to $M_4^1$ and $M_4^2$. Moreover, this argument shows that the ideals that destabilize at the largest wall cannot lie on the intersection of the two components and we have $M_5^1 = M_4^1$.
\end{proof}
\end{thm}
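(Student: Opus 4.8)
The plan is to prove Theorem \ref{thm:last_wall} by running the same wall-crossing machinery we have already used for the previous walls, now applied to the outermost wall separating $M_4$ from $M_5 = \Hilb^{4t}(\P^3)$. The destabilizing objects at this wall are the pairs $(\OO(-1), \II_{Z'/V}(-4))$ from case (5) of Corollary \ref{cor:allwalls}, so the first step is to record the relevant $\Ext^1$-groups from Lemma \ref{lem:ext-computations}, namely $\Ext^1(\OO(-1), \II_{Z'/V}(-4)) = \C^2$, $\Ext^1(\II_{Z'/V}(-4), \OO(-1)) = \C^{15}$, and $\Ext^1(\II_{Z'/V}(-4), \II_{Z'/V}(-4)) = \C^7$. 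Since $\OO(-1)$ is rigid, I also note $\Ext^1(\OO(-1),\OO(-1)) = 0$. The moduli space of objects $\II_{Z'/V}(-4)$ is irreducible of dimension $5$ (five dimensions for the length-two subscheme $Z'$ in a varying plane $V$, i.e. a $\G(1,3)\times(\P^3)^\vee$-type count once one accounts for $Z'\subset V$).

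Next I would feed these numbers into Lemma \ref{lem:ext_estimate}. Applying it to the extension type $0 \to \OO(-1) \to E \to \II_{Z'/V}(-4) \to 0$ (with $n=m=1$) gives the bound
\[
\ext^1(E,E) \leq 0 + 7 + \Ext^1(\OO(-1),\II_{Z'/V}(-4)) + \Ext^1(\II_{Z'/V}(-4),\OO(-1)) - 1 = 0+7+2+15-1,
\]
and symmetrically for the reversed extension $0 \to \II_{Z'/V}(-4) \to E \to \OO(-1) \to 0$. In each case the resulting bound is at most the dimension of the ambient component the object lives on, which certifies that every strictly semistable object at the largest wall is a \emph{smooth} point of either $M_4$ or $M_5$. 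This is precisely the hypothesis needed to invoke Lemma \ref{lem:new_components}: the loci $A \subset M_4$ and $B \subset M_5$ destabilized at the wall are both proper in their components, so neither component of $M_4$ is swallowed and each persists across the wall, yielding exactly two components of $\Hilb^{4t}(\P^3)$ that are birational to $M_4^1$ and $M_4^2$ respectively.

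The final and most delicate step is to upgrade the birationality of the first component to an equality $M_5^1 = M_4^1$. The point is that the ideals which genuinely change at this wall — those arising as nontrivial extensions involving $\OO(-1)$ and $\II_{Z'/V}(-4)$ — must be shown not to lie on the intersection of the two components, so that the main component $\Hilb^{4t}_1 = M_5^1$ is left untouched and remains the smooth double blow-up of $\G(1,9)$ established through $M_2$ and $M_3^1$ in Propositions \ref{prop:second_moduli}, \ref{prop:third_moduli} and the preceding proposition. Concretely, I would argue that the smoothness bound from Lemma \ref{lem:ext_estimate} forces any such destabilized object off the $16$-dimensional first component, since lying on the transversal intersection of the two components requires $\ext^1(E,E) \geq 24$ by Proposition \ref{prop:piene-schlessinger}, whereas the extensions here are smooth points with $\ext^1(E,E)$ strictly below that threshold. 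Combined with the dimension count ($\dim\Hilb^{4t}_2 = 23$) and the transversal $15$-dimensional intersection already pinned down in Proposition \ref{prop:piene-schlessinger}, this gives all the claimed numerics.

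The main obstacle will be the bookkeeping in that last step: one must verify carefully that the locus destabilized at the largest wall really does avoid the intersection $\Hilb^{4t}_1 \cap \Hilb^{4t}_2$, rather than merely being birational away from it. The cleanest route is to combine the smooth-point conclusion from Lemma \ref{lem:ext_estimate} with the rigidity of the intersection locus established in Proposition \ref{prop:piene-schlessinger} — since the intersection is transversal of dimension $15$ with $\ext^1 = 24$ at its points, any object with strictly smaller $\ext^1$ cannot lie there — and then apply Lemma \ref{lem:new_components} once more to confirm that the reversed extensions do not reintroduce intersection points on $M_3^1$, exactly as in the proof of the previous proposition. Everything else is a direct translation of the earlier wall-crossing arguments, so the novelty is entirely in controlling which smooth points get exchanged at this outermost wall.
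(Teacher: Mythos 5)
Your proposal follows essentially the same route as the paper's proof: record the three $\Ext^1$-groups from Lemma \ref{lem:ext-computations}, use Lemma \ref{lem:ext_estimate} to bound $\ext^1(E,E)\leq 23$ for every strictly semistable object at the outermost wall (hence smoothness on one component and, since the transversal intersection forces $\ext^1\geq 24$, exclusion from $\Hilb^{4t}_1\cap\Hilb^{4t}_2$), and then apply Lemma \ref{lem:new_components} to conclude there are exactly two components with $M_5^1=M_4^1$. The only difference is that you spell out the $23<24$ comparison that the paper leaves implicit; the argument is correct.
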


We denote the exceptional divisor of the first blow up of the main component by $E_1$ and the exceptional divisor of the second blow up by $E_2$. We finish this section by describing which curves actually lie in $E_1$ and $E_2$.

\begin{prop}
\label{prop:E1_E2_description}
The general point in $E_1$ parametrizes subschemes of $\P^3$ that are the union of a plane cubic and an incident line. The general point in $E_2$ parametrizes subschemes of $\P^3$ that are plane quartics with two nodes and two embedded points at such nodes.
\begin{proof}
By Corollary \ref{cor:allwalls}, any ideal sheaf $I_C$ of a scheme in $E_1$ fits into an exact sequence of the form $0 \to \II_L(-1) \to \II_C \to \OO_V(-3) \to 0$, where $L \subset \P^3$ is a line and $V \subset \P^3$ is a plane. By Proposition \ref{prop:second_moduli} the reverse holds, i.e. all ideal sheaves fitting into this sequence correspond to curves in $E_1$. For the general element in $E_1$ the line $L$ is not contained $V$. Then the above sequence implies that $C \subset L \cup V$. If $C \subset V$, then there would be a morphism $\OO(-1) \to \II_C$ destabilizing the curve earlier, a contradiction. Thus, $L$ is an irreducible component of $C$ and another component of degree $3$ lies in $V$.

By Theorem \ref{thm:last_wall}, the last two walls do not modify the main component. Therefore, Corollary \ref{cor:allwalls} implies that any ideal sheaf $I_C$ of a scheme in $E_2$ fits into an exact sequence of the form $0 \to \II_Z(-1) \to \II_C \to \OO_V(-4) \to 0$, where $Z \subset \P^3$ is a zero dimensional subscheme of length $2$ and $V \subset \P^3$ is a plane. This implies that $C$ is plane quartic curve plus two points. The two points have to be embedded, since otherwise the curve cannot be smoothened. Moreover, a classical result by Hironaka \cite[p. 360]{Hir58} implies that the two embedded points must occur at singularities of the plane quartic.
\end{proof}
\end{prop}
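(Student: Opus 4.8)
The plan is to read off the moduli-theoretic description of the exceptional divisors $E_1$ and $E_2$ directly from the wall-crossing data in Corollary~\ref{cor:allwalls}, since by construction $E_1$ is the exceptional divisor of the blow up performed when crossing the wall between $M_1$ and $M_2$, and $E_2$ arises at the second blow up (the wall between $M_2$ and $M_3$, restricted to the principal component). First I would handle $E_1$: by Corollary~\ref{cor:allwalls} any ideal sheaf $I_C$ corresponding to a point of $E_1$ fits into an exact sequence $0 \to \II_L(-1) \to \II_C \to \OO_V(-3) \to 0$, and by Proposition~\ref{prop:second_moduli} the converse also holds, so $E_1$ is exactly the locus of such extensions that are genuine ideal sheaves. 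For a general point the line $L$ is not contained in $V$, and I would argue that $L$ must be an honest component of $C$: if instead $C \subset V$ then there would be a map $\OO(-1) \to \II_C$, which corresponds to a later wall and would have destabilized $I_C$ before this chamber, a contradiction. The sequence then forces $C \subseteq L \cup V$, so the residual degree-$3$ piece lies in $V$ and $C$ is the union of a plane cubic and an incident line.

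Next I would treat $E_2$. Here the key preliminary point is that, by Theorem~\ref{thm:last_wall}, the final two walls do not modify the principal component $\Hilb_1^{4t}$; hence the curves in $E_2$ are still described by the third wall in Corollary~\ref{cor:allwalls}, namely an exact sequence $0 \to \II_Z(-1) \to \II_C \to \OO_V(-4) \to 0$ with $Z$ of length two and $V$ a plane. This immediately shows $C$ is a plane quartic together with two points. The two points must be embedded points of $C$ (supported on the quartic), since a curve with two isolated off-plane points cannot be smoothed to an elliptic quartic and so could not lie on the principal component. Finally, to locate the embedded points I would invoke Hironaka's classical result \cite{Hir58}: an embedded point on a reduced plane curve can only occur at a singular point, so the two embedded points sit at nodes of the plane quartic, giving the stated description.

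The main obstacle, and the step requiring the most care, is pinning down precisely where the embedded points lie and why the picture is the ``generic'' one claimed. The delicate assertions are (i) that the points are embedded rather than isolated, which is forced by membership in the \emph{principal} (smoothable) component rather than in $M_3^2$, and (ii) that genericity within $E_2$ yields exactly \emph{two nodes} with an embedded point at each. For (i) I rely on Theorem~\ref{thm:last_wall} to know we are on $\Hilb_1^{4t}$ and on the deformation-theoretic analysis already carried out in the proof of Proposition~\ref{prop:third_moduli} distinguishing the two components; for (ii) the Hironaka constraint does the essential work, forcing the embedded points to the singular locus of the plane quartic, and a general such quartic has exactly two nodes matching the dimension count. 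I would keep the argument short by quoting Corollary~\ref{cor:allwalls}, Proposition~\ref{prop:second_moduli}, and Theorem~\ref{thm:last_wall} rather than re-deriving the extension structure.
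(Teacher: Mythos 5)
Your proposal is correct and follows essentially the same route as the paper: reading the extension structure of $E_1$ and $E_2$ off Corollary \ref{cor:allwalls}, using Proposition \ref{prop:second_moduli} for the converse on $E_1$, ruling out $C \subset V$ via a destabilizing map $\OO(-1) \to \II_C$, invoking Theorem \ref{thm:last_wall} to keep the $E_2$ description valid on the principal component, deducing embeddedness from smoothability, and placing the embedded points at singularities via Hironaka. No substantive differences.
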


\section{Effective divisors of the Principal Component $\Hilb_1^{4t}$}

In this section we compute the cone of effective divisors $\Eff(\Hilb_1^{4t})$, where $\Hilb_1^{4t}$ denotes the principal component of the Hilbert scheme $\Hilb^{4t}(\P^3)$.
By Theorem B, there is an isomorphism $\Pic(\Hilb_1^{4t})\cong \Z^3$, with generators $H$, $E_1$ and $E_2$. Here, $H$ denotes the pullback of the class $\sigma_1\in A^1(\G(1,9))$, whereas $E_1$ is the exceptional divisor of the first blow up and $E_2$ is the exceptional divisor of the second blow up. Set-theoretically, $E_1$ is the closure, in $\Hilb_1^{4t}$, of the locus parametrizing subschemes of $\P^3$ that consist of a smooth plane cubic with an incident line. Moreover, $E_2$ is the closure, in $\Hilb_1^{4t}$, of the locus parametrizing plane quartics with two nodes and two embedded points at such nodes.

As a consequence of Theorem B, we also have that $\Pic(\Hilb_1^{4t}) \otimes \Q \cong N^1(\Hilb_1^{4t})\otimes \Q$, where $N^1(\Hilb_1^{4t})\otimes \Q$ denotes the N\'eron-Severi group of Cartier divisors with rational coefficients up to numerical equivalence.

In order to describe the cone of effective divisors $\Eff(\Hilb_1^{4t})$, we need an additional divisor $\Delta$ defined as the closure of the locus of irreducible nodal elliptic quartics. 

\begin{thm:cones}
The cone of effective divisors of $\Hilb_1^{4t}$ is generated by $\Eff(\Hilb_1^{4t})=\langle\Delta, E_1,E_2\rangle$.
\end{thm:cones}

The strategy of the proof is to construct a dual basis of curves to $\Delta$, $E_1$, and $E_2$ in $N_1(\Hilb_1^{4t})$, the space of 1-cycles up to numerical equivalence. Since the closure of the convex cone of movable curves is dual to the effective cone, we will then observe that these curves are movable; which allows us to conclude the proof. In our context, a curve in a smooth algebraic variety $X$ is called \emph{movable}, if it lies in a family that covers a dense open subset of $X$. We refer the reader to \cite{BDPP13} for a detailed exposition on movable curves. 

Before proceeding with the proof, we will define and describe some properties of our movable curves. Let $Q\subset \P^3$ be a a fixed smooth quadric. Then, the curve $\gamma_1$ is a general pencil in $|\mathcal{O}_Q(2)|$. This curve is movable because a generic curve parametrized by $\Hilb_1^{4t}$ is the transversal intersection of two quadric hypersurfaces $Q_1,Q_2$ where we can assume one of these quadrics is smooth. Moreover, by construction $\gamma_1 \cdot E_1=\gamma_1 \cdot E_2=0$.

On the other hand, the intersection $\gamma_1 \cdot \Delta=12$ holds. This follows from the fact that the parameter space of plane curves of degree $d$ in $\P^2$ contains a divisor of degree $3(d-1)^2$ of singular curves (see \cite[Ch 13.D]{GKZ08}). The following geometric argument is self contained.

The base locus of a general pencil in $|\mathcal{O}_Q(2)|$, where $Q$ stands for a smooth quadric, consists of $8$ points. This means that the total space of this pencil $\mathcal{X}$ is the blowup of $Q$ on these $8$ points, and this implies that its topological Euler characteristic $\chi_{\tiny{top}}(\mathcal{X})=12$. Observe that the pencil $\mathcal{X}$ is not a fibration over $\P^1$ due to the presence of singular fibers: if $\mathcal{X}$ were a fibration over $\P^1$, then the topological Euler characteristic $\chi_{\tiny{top}}(\mathcal{X})$ would be zero. This means that we may count the singular fibers of $\mathcal{X}$ (which are the singular members of the pencil), by computing the topological Euler characteristic $\chi_{\tiny{top}}(\mathcal{X})$. Since we are considering a general pencil, Bertini's Theorem guarantees that the singular fibers of $\mathcal{X}$ are all nodal curves.

We now define two more curves $\gamma_2$ and $\gamma_3$. Let $\Lambda_1$ and $\Lambda_2$ be two $3$-planes in $\P^7$. Let $s: \P^3 \times \P^1 \to \P^7$ be the Segre embedding, and for any $t \in \P^1$ we write $s_t: \P^3 \to \P^7$ for the restriction of $s$ to $\P^3 \times \{ t \}$.
We have a projection $\pi: \P^7 \backslash \Lambda_1 \to \Lambda_2$. To summarize, we have the following diagram of maps with vertical projections
\[
\xymatrix{ 
\P^3 \times \P^1 \ar[r]^{s \times \id} \ar[d] & \P^7 \times \P^1 \ar@{-->}[r]^{\pi \times \id} \ar[d] & \Lambda_2 \times \P^1 \ar[d] \\
\P^3 \ar[r]^{s_t} & \P^7 \ar@{-->}[r]^{\pi} & \Lambda_2 \cong \P^3.
}
\]
Observe that both $s_t$ and $\pi$ are linear maps.

\begin{lem}
\label{lem:isomorphic_projection}
Let $t \in \P^1$, and let $\Lambda_2$ be general. If $\Lambda_1 \cap s_t(\P^3) = \emptyset$, then $\pi \circ s_t$ is an isomorphism. If $\Lambda_1 \cap s_t(\P^3)$ is a point, then the image of $\pi \circ s_t$ is a plane in $\Lambda_2$.
\begin{proof}
The image of $\pi \circ s_t$ is the intersection of $\Lambda_2$ with the linear subspace generated by $\Lambda_1$ and $s_t(\P^3)$.
\end{proof}
\end{lem}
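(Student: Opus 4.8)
The plan is to reduce everything to elementary projective linear algebra, exploiting that both $s_t$ and $\pi$ are restrictions of linear maps. First I would observe that $W := s_t(\P^3)$ is a linearly embedded $3$-plane in $\P^7$: fixing the $\P^1$-coordinate $t = [a:b]$ turns the Segre map into $[x_0:\dots:x_3] \mapsto [a x_0 : \dots : a x_3 : b x_0 : \dots : b x_3]$, which is linear in the homogeneous coordinates of $\P^3$. Since $\Lambda_2$ is general, it is disjoint from $\Lambda_1$, so $\pi$ is the honest linear projection $\P^7 \setminus \Lambda_1 \to \Lambda_2 \cong \P^3$ with center $\Lambda_1$, sending a point $p$ to the unique intersection $\langle \Lambda_1, p\rangle \cap \Lambda_2$.

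The crux is the identity $\pi(W) = \langle \Lambda_1, W \rangle \cap \Lambda_2$, the image of a linear space being its join with the center cut down by the target. I would prove the inclusion $\subseteq$ directly from $\pi(p) \in \langle \Lambda_1, p\rangle \cap \Lambda_2 \subseteq \langle\Lambda_1, W\rangle \cap \Lambda_2$, and the reverse inclusion using the cone structure of the join: every point of $\langle\Lambda_1, W\rangle$ lies on a line joining a point of $\Lambda_1$ to a point $p$ of $W$, so any $y \in \langle\Lambda_1, W\rangle \cap \Lambda_2$ satisfies $y \in \langle\Lambda_1, p\rangle$ and hence $\pi(p) = y$.

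With this identity in hand the two cases follow from the dimension formula $\dim\langle\Lambda_1, W\rangle = \dim\Lambda_1 + \dim W - \dim(\Lambda_1 \cap W)$. If $\Lambda_1 \cap W = \emptyset$, then $\langle\Lambda_1, W\rangle = \P^7$, so $\pi(W) = \Lambda_2$; to upgrade this surjection of $3$-planes to an isomorphism I would check injectivity, namely that no line of $W$ meets $\Lambda_1$ (which is automatic since $W \cap \Lambda_1 = \emptyset$ and $W$ is linear), so that $\pi \circ s_t$ is a bijective linear morphism and therefore an isomorphism. If $\Lambda_1 \cap W$ is a single point, the formula gives $\dim \langle\Lambda_1, W\rangle = 6$, a hyperplane, and for general $\Lambda_2$ (hence $\Lambda_2 \not\subset \langle\Lambda_1, W\rangle$) the intersection $\langle\Lambda_1, W\rangle \cap \Lambda_2$ is exactly a $2$-plane.

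I expect the only real subtlety to be the reverse inclusion in the image identity $\pi(W) = \langle\Lambda_1, W\rangle\cap\Lambda_2$ — that is, confirming that the map $\pi\circ s_t$, which is a priori undefined on $W \cap \Lambda_1$, still has image the full join-intersection rather than a proper subset. Everything else is bookkeeping with dimensions, together with invoking the genericity of $\Lambda_2$ to guarantee $\Lambda_1 \cap \Lambda_2 = \emptyset$ and the transversality $\Lambda_2 \not\subset \langle\Lambda_1, W\rangle$.
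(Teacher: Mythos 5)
Your proposal is correct and follows essentially the same route as the paper, whose entire proof is the one-line observation that the image of $\pi \circ s_t$ equals $\langle \Lambda_1, s_t(\P^3)\rangle \cap \Lambda_2$; you simply prove that join identity and carry out the (implicit) dimension count and injectivity check. The extra details you supply — linearity of $s_t$, the cone structure of the join, and the fact that identifying two points under projection forces the line through them to meet the center — are all accurate and fill in exactly what the paper leaves to the reader.
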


The image of the Segre embedding $s(\P^3 \times \P^1)$ has degree four. Hence, $\Lambda_1$ can be chosen general such that it intersects the Segre embedding in exactly four points. If we also choose $\Lambda_2$ general, then by Lemma \ref{lem:isomorphic_projection}, we have that $\pi \circ s_t: \P^3 \to \Lambda_2 \cong \P^3$ is an isomorphism except for four values.

\begin{defn}
Let $E$ be a smooth elliptic quartic in $\P^3$. Let $\Lambda_2$ be a general $3$-plane in $\P^7$.
\begin{enumerate}
\item Let $\Lambda_1$ be another general $3$-plane in $\P^7$. Then $\gamma_2$ is the image $(\pi \times \id) \circ (s \times \id) (E \times \P^1)$. It is a flat family of smooth curves isomorphic to $E$ everywhere, except for four special fibers.
\item Consider four general points in $s(E \times \P^1)$ and let $\Lambda'_1$ be the unique $3$-plane generated by them. Then $\gamma_3$ is the image $(\pi \times \id) \circ (s \times \id) (E \times \P^1)$. It is a flat family of smooth curves isomorphic to $E$ everywhere except for four special fibers.
\end{enumerate}
\end{defn}

\begin{lem}
\label{lem:singular_fibers_curves}
The four singular fibers for $\gamma_2$ are plane quartic curves with only two nodes and embedded points at them. For $\gamma_3$ these four fibers are smooth plane cubic curves together with an incident line. Both $\gamma_2$ and $\gamma_3$ are movable.
\begin{proof}
Let $t \in \P^1$ correspond to one of the four singular fibers of $\gamma_2$. Since $\Lambda_1$ is chosen general it will not intersect $s(E \times \P^1)$. Therefore, Lemma \ref{lem:isomorphic_projection} implies that the image $\pi(s_t(E))$ is a plane curve. Since $\pi \circ s_t$ is defined on all of $E$, the set-theoretic support of $\gamma_2$ at $t$ is a plane curve of degree four with $2$ nodes and no other singularities. Hence, we get a plane quartic with two embedded points at the only $2$ nodes.

Let $t \in \P^1$ correspond to one of the four singular fibers $\gamma_3$. By definition the intersection of $\Lambda'_1$ with $E \times \P^1$ contains four points one of which is of the form $(x,t)$. Choose a plane $\P^2 \subset \Lambda'_1$ that does not intersect the Segre embedding $s(\P^3 \times \P^1)$ and a general $\P^4 \subset \P^7$. Then the projection of $s_t(\P^3)$ away from $\P^2$ onto $\P^4$ is the intersection of this $\P^4$ with the linear span of $s_t(\P^3)$ and $\P^2$ which is a $\P^6$. In particular, it is of dimension $3$, i.e. $E$ is projected isomorphically onto $\P^3 \subset \P^4$. Let $P \in \P^4$ be the image of $(x,t)$ via this projection. Then we project from this point onto a general $\Lambda_2 \subset \P^4$. The image is isomorphic to $E$. Hence, we get in $\Hilb_1^{4t}$ a smooth plane cubic together with an incident line.

Both curve classes $\gamma_2$ and $\gamma_3$ are movable. Indeed, every smooth curve parametrized in $\Hilb_1^{4t}$ is contained in some representative of $\gamma_2$ and $\gamma_3$ by varying the curve $E$ used to define them.
\end{proof}
\end{lem}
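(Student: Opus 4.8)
The plan is to exhibit the four special fibers of each family explicitly as flat limits of the linear projections defining $\gamma_2$ and $\gamma_3$, and then to deduce movability from the fact that the generic fibers realize every smooth elliptic quartic. First I would record the common framework. For generic $t \in \P^1$ the center of projection ($\Lambda_1$ for $\gamma_2$, $\Lambda'_1$ for $\gamma_3$) is disjoint from $s_t(\P^3)$, so by the first case of Lemma~\ref{lem:isomorphic_projection} the map $\pi \circ s_t$ is a linear isomorphism onto $\Lambda_2 \cong \P^3$ and the fiber is a smooth elliptic quartic isomorphic to $E$. Since the generic fiber has Hilbert polynomial $4t$, the induced map $\P^1 \to \Hilb^{4t}(\P^3)$ factors through $\Hilb_1^{4t}$ and every fiber is a flat limit of smooth elliptic quartics. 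The four special values $t_1, \dots, t_4$ are exactly those for which the center meets $s_t(\P^3)$; because the Segre image $s(\P^3 \times \P^1)$ has degree four and the center is a $3$-plane, there are precisely four such intersection points. At each $t_i$ the second case of Lemma~\ref{lem:isomorphic_projection} shows $\pi \circ s_{t_i}$ collapses $\P^3$ onto a plane, and concretely it is the linear projection of $\P^3$ from the single point $x_i$ where the center meets $s_{t_i}(\P^3)$.

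For $\gamma_2$ the center $\Lambda_1$ is general, hence disjoint from the surface $s(E \times \P^1)$, so $x_i \notin E$. Projecting the smooth nondegenerate curve $E$ (degree $4$, genus $1$) from a general external point is birational onto a plane quartic whose only singularities are nodes, and the number of these apparent double points is $\binom{4-1}{2} - 1 = 2$. Thus the reduced support of the fiber is a plane quartic with exactly two nodes, of arithmetic genus $3$ and Hilbert polynomial $4t - 2$. Flatness forces Hilbert polynomial $4t$, so two embedded points must appear; since a flat limit of smooth curves can only acquire embedded points at singularities of its support, these sit at the two nodes, in agreement with the description of $E_2$ in Proposition~\ref{prop:E1_E2_description}.

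For $\gamma_3$ the center $\Lambda'_1$ is spanned by four points of $s(E \times \P^1)$, so at each $t_i$ one has $x_i \in E$, with $x_i$ general on $E$ because the four points were chosen generically. Projecting $E$ from a point lying on it drops the degree by one onto a plane curve of degree $3$ that is birational to $E$, hence of geometric genus $1$; since a plane cubic has arithmetic genus $1$, the image is a smooth plane cubic. To pin down the flat limit I would use the two-step factorization of $\pi$: first project $\P^7$ from a plane $\P^2 \subset \Lambda'_1$ disjoint from the Segre image, under which $s_{t_i}(\P^3)$ — and in particular $E$ — maps isomorphically onto a $\P^3 \subset \P^4$, carrying $x_i$ to a point $P$ on the image of $E$; then project from $P$. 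The projected curve has degree $3$ while the family has degree $4$, so flatness produces a residual line, and because the lost degree is concentrated along the secants that collapse to the tangent line $T_{x_i}E$ as the center moves onto the curve, this line passes through the image of $x_i$. Hence the flat limit is a smooth plane cubic together with an incident line meeting it in a single point, a curve of degree $4$ and arithmetic genus $1$, matching $E_1$ in Proposition~\ref{prop:E1_E2_description}.

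Finally, for movability I would note that both constructions yield honest algebraic families of the curves $\gamma_2$ and $\gamma_3$ as the defining data vary. Fixing the target $\P^3 = \Lambda_2$ with general auxiliary data, every smooth elliptic quartic arises as a generic fiber for a suitable choice of $E$, namely its preimage under the relevant isomorphism $\pi \circ s_t$. Since the smooth elliptic quartics form a dense open subset of $\Hilb_1^{4t}$, the union of these representatives is dense, so by definition $\gamma_2$ and $\gamma_3$ are movable in the sense of \cite{BDPP13}. The step I expect to be the main obstacle is the $\gamma_3$ computation: establishing that the flat limit is genuinely a cubic together with a line — and in particular that the line is incident to the cubic at one point, rather than the curve merely acquiring embedded points. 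The two-step projection is the device that upgrades the naive degree–genus bookkeeping into a rigorous identification of both components and their incidence.
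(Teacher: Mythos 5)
Your proposal is correct and follows essentially the same route as the paper: general position of the center plus Lemma \ref{lem:isomorphic_projection}, the two-node count for projection of $E$ from an external point, flatness plus embedded points at the nodes for $\gamma_2$, the identical two-step projection through a $\P^2 \subset \Lambda'_1$ and a general $\P^4$ for $\gamma_3$, and movability by varying $E$. You merely make explicit some bookkeeping the paper leaves implicit (the secant count $\binom{d-1}{2}-g=2$, the Hilbert-polynomial argument forcing the embedded points and the residual incident line), which if anything tightens the paper's rather terse assertions.
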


\begin{proof}[Proof of Theorem C]
Since $E_1$, $E_2$, and $\Delta$ are effective, we only need to show the containment $\Eff(\Hilb_1^{4t})\subset \langle E_1, E_2,\Delta\rangle$. Observe that this latter containment is equivalent to $\langle E_1,E_2,\Delta\rangle^{\vee} \subset \Eff(\Hilb_1^{4t})^{\vee}$ of dual cones. Since $\Eff(\Hilb_1^{4t})^{\vee}$ is the cone of movable curves, it suffices to exhibit that the dual cone $\langle E_1, E_2, \Delta\rangle^{\vee}$ is generated by movable curves. We already proved that $\gamma_1,\gamma_2,\gamma_3$ are movable curves. We will

This means we are left to show they generate the dual cone $\langle E_1$, $E_2, \Delta\rangle^{\vee}$. It suffices to check that the following intersection numbers hold. Note that for our purposes it is enough to show that the intersections are zero or positive, and therefore, we will skip proving that the intersections are transversal.
\begin{equation*}
\begin{aligned}
 \gamma_1 \cdot E_1=0,\quad & \gamma_2 \cdot E_1=0, \quad & \gamma_3 \cdot E_1=4, \\
 \gamma_1 \cdot E_2=0, \quad & \gamma_2 \cdot E_2=4, \quad & \gamma_3 \cdot E_2 =0,\\
\gamma_1 \cdot \Delta=12,\quad & \gamma_2 \cdot \Delta=0, \quad & \gamma_3 \cdot \Delta=0. \\
\end{aligned}
\end{equation*}
The intersections with $E_1$ and $E_2$ follow directly from the definitions and Lemma \ref{lem:singular_fibers_curves}. The intersection numbers $\gamma_1 \cdot \Delta = 12$ is also discussed above. We are left to show $\gamma_2 \cdot \Delta=\gamma_3 \cdot \Delta=0$.

Suppose $\gamma_2 \cdot \Delta \neq 0$. Then there is a flat family $\pi: S \to Z$ for a smooth curve $Z$ such that for general $z \in Z$ the fiber $S_z$ is a nodal complete intersection in $\Delta$, and the special fiber $S_0$ is a curve in $\gamma_2 \cap E_2$. Therefore, $S_0$ is a plane quartic curve with exactly two nodes and simple embedded points at both nodes. The normalization $\tilde{S}$ smooths out the nodes in the general fibers by making them into $\P^1$. By \cite[Theorem III.7]{BEAU} this means $\tilde{S}$ is birational over $Z$ to $\P^1 \times Z$. We can resolve the rational map from $\P^1 \times Z$ to $S$ by successively blowing up points. That leads to a family $X \to Z$ factoring through $S \to Z$ such that every fiber is a union of rational curves $\P^1$. That means the special fiber $S_0$ is the set theoretic image of such a union of rational curves. Every $\P^1$ must map to the normalization of the reduced structure of $S_0$. But the normalization of the reduced structure of $S_0$ is a smooth curve of genus $1$, and $\P^1$ has no non trivial maps to an elliptic curve.

Suppose $\gamma_3 \cdot \Delta \neq 0$. Then there is a flat family $\pi: S \to Z$ for a smooth curve $Z$ such that for general $z \in Z$ the fiber $S_z$ is a nodal complete intersection in $\Delta$, and the special fiber $S_0$ is a curve in $\gamma_3 \cap E_1$. This means $S_0$ is the union of a smooth plane cubic with an incident line. With the exact same argument as for $\gamma_2$, we can create a family $X \to Z$ factoring through $S \to Z$ such that every fiber is a union of rational curves $\P^1$. As previously, the special fiber $S_0$ is the image of such a union of rational curves. Since there is no non-trivial map from $\P^1$ to any elliptic curve, they must all map to the incident line, a contradiction.
\end{proof}

\section*{Appendix A: Macaulay2 Code}

This appendix contains all Macaulay2 code used in Proposition \ref{prop:piene-schlessinger} and Lemma \ref{lem:ext-computations}.

{\tiny
\begin{multicols}{2}
\begin{lstlisting}
-------------------------------------
-- Computation for Proposition 3.2 --
-------------------------------------
needsPackage "VersalDeformations";
S = QQ[x,y,z,w];
F0 = matrix {{x^2,x*y,x*z^2,y^4}};
(F,R,G,C) = localHilbertScheme(F0,Verbose=>2);
T = ring first G;
sum G
-------------------------------
-- Computation for Lemma 4.3 --
-------------------------------
-- A = O_V(-3)
-- B = I_L(-1) L is not contained in V
-- C = I_L(-1) L is contained in V
X = Proj(QQ[x,y,z,w]);
A = (OO_X(0) / sheaf module ideal(x))**OO_X(-3);
B = (sheaf module ideal (y,z))**OO_X(-1);
C = (sheaf module ideal (x,y))**OO_X(-1);
Ext^1(B,A)
Ext^1(C,A)
Ext^1(A,B)
Ext^1(A,C)
Ext^1(A,A)
Ext^1(B,B)
Ext^1(C,C)
-------------------------------
-- A = O_V(-4)
-- B = I_Z(-1) Two separate points
--             outside V
-- C = I_Z(-1) Double point outside V
-- D = I_Z(-1) One point inside,
--             one point outside V
-- E = I_Z(-1) Two separate points
--             inside V
-- F = I_Z(-1) Double point scheme
--             theoretically in V
-- G = I_Z(-1) Double point set but
--             not scheme theoretically in V
X = Proj(QQ[x,y,z,w]);
A = (OO_X(0) / sheaf module ideal(x))**OO_X(-4);
B = (sheaf module ideal (y*(x-y),z,w))**OO_X(-1);
C = (sheaf module ideal (y^2,z,w))**OO_X(-1);
D = (sheaf module ideal (x*y,z,w))**OO_X(-1);
E = (sheaf module ideal (x,y*z,w))**OO_X(-1);
F = (sheaf module ideal (x,y,z^2))**OO_X(-1);
G = (sheaf module ideal (x^2,y,z))**OO_X(-1);
Ext^1(A,B)
Ext^1(A,C)
Ext^1(A,D)
Ext^1(A,E)
Ext^1(A,F)
Ext^1(A,G)
Ext^1(B,A)
Ext^1(C,A)
Ext^1(D,A)
Ext^1(E,A)
Ext^1(F,A)
Ext^1(G,A)
Ext^1(A,A)
Ext^1(B,B)
Ext^1(C,C)
-------------------------------
-- A = I_{Q/V}(-4)
-- B = I_P(-1) P \notin V
-- C = I_P(-1) P \in V, P \neq Q
-- D = I_P(-1) P = Q
X = Proj(QQ[x,y,z,w]);
A = (sheaf module ideal (x,y,z)
    / sheaf module ideal(x))**OO_X(-4);
B = (sheaf module ideal (y,z,w))**OO_X(-1);
C = (sheaf module ideal (x,y,w))**OO_X(-1);
D = (sheaf module ideal (x,y,z))**OO_X(-1);
Ext^1(A,B)
Ext^1(A,C)
Ext^1(A,D)
Ext^1(B,A)
Ext^1(C,A)
Ext^1(D,A)
Ext^1(A,A)
Ext^1(B,B)
Ext^1(D,D)
-------------------------------
-- A = O(-1)
-- B = I_{Z'/V}(-4) Two separate points
-- C = I_{Z'/V}(-4) Double point
X = Proj(QQ[x,y,z,w]);
A = OO_X(-1);
B = (sheaf module ideal (x,y,z^2)
    / sheaf module ideal(x))**OO_X(-4);
C = (sheaf module ideal (x,y,w*z)
    / sheaf module ideal(x))**OO_X(-4);
Ext^1(B,A)
Ext^1(C,A)
Ext^1(A,B)
Ext^1(A,C)
Ext^1(B,B)
Ext^1(C,C)
-------------------------------
-- A = I_Z(-1), Z \subset V double point at P
-- B = O_V(-4)
-- C = I_{P/V}(-4)
-- D = I_P(-1)
X = Proj(QQ[x,y,z,w]);
A = (sheaf module ideal(x,y,z^2))**OO_X(-1);
B = (OO_X(0) / sheaf module ideal(x))**OO_X(-4);
C = (sheaf module ideal(x,y,z)
    / sheaf module ideal(x))**OO_X(-4);
D = (sheaf module ideal(x,y,z))**OO_X(-1);
Ext^1(A,C)
Ext^1(B,C)
Ext^1(A,D)
Ext^1(B,D)
\end{lstlisting}
\end{multicols}
}


\begin{thebibliography}{MNOP06}

\bibitem[AB13]{AB13}
Arcara, D.; Bertram, A.: Bridgeland-stable moduli spaces for K-trivial surfaces. With an appendix by Max Lieblich. J. Eur. Math. Soc. (JEMS) 15 (2013), no. 1, 1--38.

\bibitem[ABCH13]{ABCH13} Arcara, D.; Bertram, A.; Coskun, I.; Huizenga, J.: The
minimal model program for the Hilbert scheme of points on $\P^2$ and Bridgeland
stability. Adv. Math. 235 (2013), 580--626.


\bibitem[BDPP13]{BDPP13}
Boucksom, S.; Demailly, J.-P.; P{\u{a}}un, M.; Peternell, T.: The pseudo-effective cone of a compact K\"ahler manifold and varieties of negative Kodaira dimension. J. Algebraic Geom. 22 (2013), no. 2, 201--248.

\bibitem[BM14]{BM14} Bayer, A.; Macr\`i, E.: MMP for moduli of sheaves on
K3s via wall-crossing: nef and movable cones, Lagrangian fibrations. Invent.
Math. 198 (2014), no. 3, 505--590.

\bibitem[BMS16]{BMS16}
Bayer, A.; Macr\`i, E.; Stellari, P.: The space of stability conditions on abelian threefolds, and on some Calabi-Yau threefolds. Invent. Math. 206 (2016), no. 3, 869--933.

\bibitem[BMT14]{BMT14}
Bayer, A.; Macr\`i, E.; Toda, Y.: Bridgeland stability conditions on threefolds I: Bogomolov-Gieseker type inequalities. J. Algebraic Geom. 23 (2014), no. 1, 117--163.

\bibitem[Bea96]{BEAU}
Beauville, A.: Complex algebraic surfaces. Second edition. London Mathematical Society Student Texts, 34. Cambridge University Press, Cambridge, 1996.

\bibitem[Bri07]{Bri07}
Bridgeland, T.: Stability conditions on triangulated categories. Ann. of Math. (2) 166 (2007), no. 2, 317--345.

\bibitem[Bri08]{Bri08}
Bridgeland, T.: Stability conditions on K3 surfaces. Duke Math. J. 141 (2008), no. 2, 241--291.

\bibitem[CHW17]{CHW17} Coskun, I.; Huizenga, J.; Woolf, M.: The effective cone of the moduli space of sheaves on the plane. J. Eur. Math. Soc. (JEMS) 19 (2017), no. 5, 1421--1467.

\bibitem[CN12]{CN12} Chen, D.; Nollet S.: Detaching embedded points. Algebra Number Theory 6 (2012), no. 4, 731--756.

\bibitem[Got08]{Got08} Gotzmann, G.: The irreducible components of $\Hilb^{4n}(\P^3)$, 2008, arXiv:0811.3160v1.

\bibitem[GKZ08]{GKZ08}
Gelfand, I. M.; Kapranov, M. M.; Zelevinsky, A. V.: Discriminants, resultants and multidimensional determinants. Reprint of the 1994 edition. Modern Birkh\"auser Classics. Birkh\"auser Boston, Inc., Boston, MA, 2008. 


\bibitem[Hir58]{Hir58} Hironaka, H.: A note on algebraic geometry over ground rings. The invariance of Hilbert characteristic functions under the specialization process. Illinois J. Math. 2 1958 355--366.

\bibitem[HRS96]{HRS96}
Happel, D.; Reiten, I.; Smal\o, S.: Tilting in abelian categories and quasitilted algebras. Mem. Amer. Math. Soc. 120 (1996), no. 575, viii+ 88 pp.

\bibitem[Ilt12]{Ilt12} Ilten, N. O.: Versal deformations and local Hilbert schemes. J. Softw. Algebra Geom. 4 (2012), 12--16.

\bibitem[Ina02]{Ina02}
Inaba, M.: Toward a definition of moduli of complexes of coherent sheaves on a projective scheme. J. Math. Kyoto Univ. 42 (2002), no. 2, 317--329.

\bibitem[Lar04]{Lar04}
Lazarsfeld, R.: Positivity in algebraic geometry I. Ergebnisse der Mathematik und ihrer Grenzgebiete. 3. Folge. A Series of Modern Surveys in Mathematics, 48, Springer, Berlin, 2004.

\bibitem[Li15]{Li15}
Li, C.: Stability conditions on Fano threefolds of Picard number one, 2015, arXiv:1510.04089v2.

\bibitem[Lie06]{Lie06}
Lieblich, M.: Moduli of complexes on a proper morphism. J. Algebraic Geom. 15 (2006), no. 1, 175--206.

\bibitem[LZ16]{LZ16}
Li, C.; Zhao, X.: Birational models of moduli spaces of coherent sheaves on the projective plane, 2016, arXiv:1603.05035.

\bibitem[M2]{M2}
Grayson, D. R.; Stillman, M. E.: Macaulay2, a software system for research in algebraic geometry. Available at \url{http://www.math.uiuc.edu/Macaulay2/}.

\bibitem[Mac14a]{MacA14}
Maciocia, A.: Computing the walls associated to Bridgeland stability conditions on projective surfaces. Asian J. Math. 18 (2014), no. 2, 263--279.

\bibitem[Mac14b]{MacE14}
Macr\`i, E.: A generalized Bogomolov-Gieseker inequality for the three-dimensional projective space. Algebra Number Theory 8 (2014), no. 1, 173--190.

\bibitem[MM13]{MM13}
Maciocia A.; Meachan C.: Rank $1$ Bridgeland stable moduli spaces on a principally polarized abelian surface. Int. Math. Res. Not. IMRN 2013, no. 9, 2054--2077.

\bibitem[MNOP06]{MNOP06}
Maulik, D.; Nekrasov, N.; Okounkov, A.; Pandharipande, R.: Gromov-Witten theory and Donaldson-Thomas theory. I. Compos. Math. 142 (2006), no. 5, 1263--1285.

\bibitem[Moi67]{Moi67}
Moishezon, B.: On n-dimensional compact complex varieties with n algebraic independent meromorphic functions. Transl., Am. Math. Soc. 63, 51--177 (1967).

\bibitem[MP13a]{MP13a}
Maciocia, A.; Piyaratne, D.: Fourier-Mukai transforms and Bridgeland stability conditions on abelian threefolds. Algebr. Geom. 2 (2015), no. 3, 270--297.

\bibitem[MP13b]{MP13b}
Maciocia, A.; Piyaratne, D.: Fourier-Mukai transforms and Bridgeland stability conditions on abelian threefolds II. Internat. J. Math. 27 (2016), no. 1, 1650007, 27 pp. 

\bibitem[Nue16]{Nue16}
Nuer, H.: Projectivity and birational geometry of Bridgeland moduli spaces on an Enriques surface. Proc. Lond. Math. Soc. (3) 113 (2016), no. 3, 345--386.

\bibitem[PS85]{PS85}
Piene, R.; Schlessinger, M.:  On the Hilbert scheme compactification of the space of twisted cubics. American Journal of Mathematics (1985) 761--774.

\bibitem[PT15]{PT15}
Piyaratne, D.; Toda, Y.: Moduli of Bridgeland semistable objects on 3-folds and Donaldson-Thomas invariants, 2015, arXiv:1504.01177v2.

\bibitem[Sch14]{Sch14}
Schmidt, B.: A generalized Bogomolov-Gieseker inequality for the smooth quadric threefold. Bull. Lond. Math. Soc. 46 (2014), no. 5, 915--923.

\bibitem[Sch15]{Sch15}
Schmidt, B.: Bridgeland Stability on Threefolds - Some Wall Crossings, arXiv:1509.04608.


\bibitem[VA92]{VA92}
Vainsencher, I.; Avritzer, D.: Compactifying the space of elliptic quartic curves. Complex projective geometry (Trieste, 1989/Bergen, 1989), 47--58, London Math. Soc. Lecture Note Ser., 179, Cambridge Univ. Press, Cambridge, 1992.

\bibitem[YY14]{YY14}
Yanagida, S.; Yoshioka, K.: Bridgeland's stabilities on abelian surfaces. Math. Z. 276 (2014), no. 1-2, 571--610.

\end{thebibliography}
\end{document}